\documentclass[10pt,a4paper,reqno,twoside]{amsart}

 \usepackage{latexsym}
 \usepackage{amsbsy}
\usepackage{amsmath,amssymb,amsthm}
\usepackage[english]{babel}
\usepackage[latin1]{inputenc}
\usepackage[T1]{fontenc}
\usepackage{enumerate}
\usepackage{mathtools}
\usepackage{color}
\usepackage{fancyhdr}
\usepackage{enumitem}

\usepackage{mathabx}
\usepackage{mathrsfs}

\def\@begintheorem#1#2{\bgroup{\sc #1 \ #2.}\it \ignorespace}
\def\@opargbegintheorem#1#2#3{ \bgroup{\sc #1\ #2 \ (#3).\\}\it \ignorespace}
\def\@endtheorem{\egroup}
\def\R{{\mathbb{R}}}
\def\N{{\mathbb{N}}}
\def\Z{{\mathbb{Z}}}
\def\C{{\mathbb{C}}}

\usepackage{color}
\usepackage[dvipsnames]{xcolor}

\newcommand{\<}{\langle}
\renewcommand{\>}{\rangle}
\renewcommand{\phi}{\varphi}

\newcommand{\supp}{\operatorname{supp}}
\newcommand{\loc}{\operatorname{loc}}
\newcommand{\absa}{\lambda_\mathrm{a}}

\newcommand{\Op}{\operatorname{Op}}

\newcommand{\funLT}{L^1_\cL}
\newcommand{\funLTR}{L^1_\cL(\R)}
\newcommand{\disLT}{\mathscr{D}^\prime_\cL}
\newcommand{\disLTR}{\mathscr{D}^\prime_\cL(\R)}

\newcommand{\norm}[1]{\langle#1\rangle}

\def\cA{{\mathcal A}}

\def\cL{{\mathcal L}}

\def\scD{{\mathscr D}}
\def\scE{{\mathscr E}}
\def\scF{{\mathscr F}}

\def\scS{{\mathscr S}}
\def\scSp{{\mathscr S}^\prime}

\def\jap#1{\langle {#1} \rangle}
\def\norm#1{\langle #1 \rangle}

\newcommand*{\scrL}{\ensuremath{\mathscr{L}}}	%linear continuous maps

\newcommand*{\ii}{\mathrm{i}}	
\newcommand{\x}{\langle x\rangle}
\newcommand{\csi}{\langle \xi \rangle}
\newcommand{\pdd}{\langle D \rangle}
\newcommand{\japdot}{\langle \cdot\rangle}
\newcommand*{\caO}{\ensuremath{\mathcal{O}}}

\newtheorem{thm}{Theorem}[section]
\newtheorem{prop}[thm]{Proposition}%[chapter]
\newtheorem{lem}[thm]{Lemma}%[chapter]
\newtheorem{cor}[thm]{Corollary}%[chapter]

\theoremstyle{definition}
\newtheorem{defn}[thm]{Definition}%[chapter]

\theoremstyle{remark}
\newtheorem{rem}[thm]{Remark}%[chapter]
\renewcommand{\theta}{\vartheta}

\newcommand{\beq}{\begin{eqnarray}}
\newcommand{\eeq}{\end{eqnarray}}
\newcommand{\beqst}{\begin{eqnarray*}}
\newcommand{\eeqst}{\end{eqnarray*}}
\newcommand{\be}{\begin{equation}}
\newcommand{\ee}{\end{equation}}

\DeclareMathOperator*{\vspan}{span}
\newcommand{\wtP}{\widetilde{P}}
\newcommand{\wtQ}{\widetilde{Q}}
\newcommand{\wte}{\widetilde{e}}
\newcommand{\wtth}{\widetilde{\theta}}

\addtocounter{section}{0}
\colorlet{myPineGreen}{PineGreen!150!white}

\title[A parametrix construction for the analysis of time-fractional PDEs]
{A parametrix construction\\for time-fractional partial differential equations}
\author{Sandro Coriasco}
\address{Dipartimento di Matematica ``G. Peano'', Universit\`a degli studi di Torino, Torino, Italy}
\email{sandro.coriasco@unito.it}
\author{Giovanni Girardi}
\address{Dipartimento di Ingegneria e Scienze, Università Telematica Universitas Mercatorum, Piazza Mattei, 10 - 00186 Roma, Italy}
\email{giovanni.girardi@unimercatorum.it}
	\author{Stevan Pilipovi\'c}
	\address{Department of Mathematics and Informatics, Faculty of Sciences,
		University of Novi Sad, Trg D. Obradovi\'ca 4, 
		RS-21000 Novi Sad, Serbia}
	\email{pilipovic@dmi.uns.ac.rs}

\begin{document}
\begin{abstract}
	We prove in detail how to construct the parametrix of a parameter-dependent family of pseudodifferential operators,
	appearing in the analysis of time-fractional partial differential equations. In particular, we perform a precise study of the dependence 
	from the parameter of the corresponding asymptotic expansion terms, as well as of the smoothing remainders. Moreover, we provide some
	results about the Laplace transform of vector-valued distributions, also appearing in the analysis of time-fractional partial differential
	equations. 

	\vspace{1cm}
	
	\begin{center}
		\textit{Dedicated to the memory of Luisa Zanghirati}
	\end{center}
\end{abstract}
\keywords{Fractional PDEs, Sub-diffusion equations, Laplace transform, Symbolic calculus, Polynomially bounded coefficients}
\subjclass[2020]{35A17, 35S10, 47G30, 35S05, 44A10}

\maketitle 

\section{Introduction}\label{sec:intro}

Fractional diffusion equations have become a standard tool for describing anomalous diffusion phenomena arising in physics, engineering and finance. Their mathematical analysis has attracted considerable attention during the last decades, both in the constant-coefficient setting and for more general classes of operators with variable coefficients.

In the constant-coefficient case, explicit solution formulae can be obtained by combining the Fourier and Laplace transforms, leading to representations in terms of Mittag--Leffler functions (see, e.g., \cite{GIL2000,GLU2000,Kem2021,MainardiBook}). These representations have been successfully applied to the study of qualitative properties of solutions, nonlinear perturbations and asymptotic behaviour (see, e.g., \cite{DAEP2019,DAG2022,DPVV2021}).

Similar questions have recently been investigated also for multi-term time-fractional equations, including models with fractional damping, see, for example, \cite{DAG2022,DAG2023}.

The situation changes substantially when variable coefficients are allowed. Explicit Fourier representations are no longer available, and the construction of solution operators requires a suitable pseudodifferential calculus. In the SG-framework this becomes particularly natural, since polynomially growing coefficients can be handled while preserving global mapping properties on weighted Sobolev spaces.

The main result of the paper is the construction of a parametrix for the SG-pseudodifferential family
\[
	s^r+\Op(a)=s^r+a(\cdot,D),
\]
where $r\in(0,1)$, $a=a(x,\xi)$ is a SG-hypoelliptic symbol and $s$ belongs to a suitable right half-plane of the complex plane. More precisely, we prove that the parametrix admits an explicit asymptotic expansion, whose coefficients are independent of the parameter $s$, while the corresponding remainder is smoothing and rapidly decaying with respect to $s$. The construction is uniform in $s$ and provides quantitative estimates for all symbol seminorms.

This symbolic description constitutes the basic ingredient for the analysis of fractional evolution equations associated with operators of the form
\[
	L=\partial_t^r+\Op(a)=\partial_t^r+a(\cdot,D), \quad r\in(0,1).
\]
Indeed, after applying the Laplace transform in time, the equation reduces to a ``suitable inversion'' of the operator family $s^r
+a(\cdot,D)=\Op(b_s)$, $b_s(x,\xi)=s^r+a(x,\xi)$, 
and the above construction makes it possible to define the corresponding solution operator through the inverse Laplace transform. 
Besides the symbolic construction of the parametrix, we establish several results on Laplace transforms of vector-valued distributions and on their interaction with pseudodifferential operators. These results provide the functional framework underlying the symbolic approach developed in this paper and may be of independent interest.
We employed the results proved in this paper in the analysis performed in \cite{CGP1}.

The paper is organized as follows. Section 2 reviews the SG-calculus of pseudodifferential operators. Section 3 is devoted to the construction of the parametrix and the proof of the main result. Finally, Section 4 develops the functional framework for vector-valued Laplace transforms and establishes several auxiliary results that provide the analytical tools needed in the study of fractional evolution equations considered in \cite{CGP1}.

\section*{acknowledgements}
The first author has been partially supported by the Italian Ministry of the University and Research - MUR, within the framework of the Call relating to the scrolling of the final rankings of the PRIN 2022 - Project Code 2022HCLAZ8, CUP D53C24003370006 (PI A. Palmieri, Local unit Sc. Resp. S. Coriasco). The first author also expresses
gratitude for the hospitality extended to him during his visit to the Department of Mathematics and Informatics, University of Novi Sad, Serbia, during A.Y. 2024/2025,
where part of this work was developed. The second author has been partially supported by INdAM GNAMPA Project, Grant Code CUP E55F22000270001.
The third author has been supported by the Serbian Academy of Sciences and Arts, project F10.

\section{The calculus of SG-pseudodifferential operators}\label{subs:sgcalc}
\setcounter{equation}{0}
We here recall some basic definitions and facts about the SG-calculus of pseudodifferential %and Fourier integral 
operators, through
standard material appeared, e.g., in \cite{ACS19b,CPS1} and elsewhere (sometimes with slightly different notational choices).
A detailed description of the calculus can be found in \cite{Cord}.
%We often employ the so-called \textit{japanese bracket} of $y\in\R^d$, given by $\langle y \rangle =\sqrt{1+|y|^2}$.

Denoting, as usual, $\norm{y}=\sqrt{1+|y|^2}$, $y\in\R^d$, in the estimates below, the class $S ^{m,\mu}=S ^{m,\mu}(\R^{d})=S ^{m,\mu}(\R^{d}\times\R^d)$ of SG-symbols 
of order $(m,\mu) \in \R^2$ is given by all the functions $a(x,\xi) \in C^\infty(\R^d\times\R^d)$
with the property
that, for any multiindices $\alpha,\beta \in \N_0^d$, there exist
constants $C_{\alpha\beta}>0$ such that the conditions 
\begin{equation}
	\label{eq:disSG}
	|D_x^{\alpha} D_\xi^{\beta} a(x, \xi)| \leq C_{\alpha\beta} 
	\x^{m-|\alpha|}\csi^{\mu-|\beta|},
	\qquad (x, \xi) \in \R^d \times \R^d,
\end{equation}
hold (cf. \cite{Cord,ME,PA72}). We often omit the base spaces $\R^d$, $\R^{d}\times\R^d$, etc., from the notation.

For $m,\mu\in\R$, $\ell\in\N_0$,
\begin{equation}\label{eq:SGseminorm}
	\vvvert a \vvvert^{m,\mu}_\ell
	= 
	\max_{|\alpha+\beta|\le \ell}\sup_{x,\xi\in\R^d}\x^{-m+|\alpha|} 
	                                                                     \csi^{-\mu+|\beta|}
	                                                                    | \partial^\alpha_x\partial^\beta_\xi a(x,\xi)|, \quad a\in\ S^{m,\mu},
\end{equation}
is a family of seminorms, defining  the Fr\'echet topology of $S^{m,\mu}$.

The corresponding
classes of pseudodifferential operators $\Op (S ^{m,\mu})=\Op (S ^{m,\mu}(\R^d))$ are given by
\begin{equation}\label{eq:psidos}
	(\Op(a)u)(x)=(a(.,D)u)(x)=(2\pi)^{-d}\int e^{\ii x\xi}a(x,\xi)\hat{u}(\xi)d\xi, \quad a\in S^{m,\mu}(\R^d),u\in\scS(\R^d),
\end{equation}
extended by duality to $\scS^\prime(\R^d)$.
The operators in \eqref{eq:psidos} form a
graded algebra with respect to composition, that is,
$$
\Op (S ^{m_1,\mu _1})\circ \Op (S ^{m_2,\mu _2})
\subseteq \Op (S ^{m_1+m_2,\mu _1+\mu _2}).
$$
The symbol $c\in S ^{m_1+m_2,\mu _1+\mu _2}$ of the composed operator $\Op(a)\circ\Op(b)$,
$a\in S ^{m_1,\mu _1}$, $b\in S ^{m_2,\mu _2}$, admits the asymptotic expansion
\begin{equation}
	\label{eq:comp}
	c(x,\xi)\sim \sum_{\alpha}\frac{i^{|\alpha|}}{\alpha!}\,D^\alpha_\xi a(x,\xi)\, D^\alpha_x b(x,\xi),
\end{equation}
which implies that the symbol $c$ equals $a\cdot b$ modulo $S ^{m_1+m_2-1,\mu _1+\mu _2-1}$.

Note that
\[
	 S ^{-\infty,-\infty}=S ^{-\infty,-\infty}(\R^{d})= \bigcap_{(m,\mu) \in \R^2} S ^{m,\mu} (\R^{d})
	 =\scS(\R^{2d}).
\]
For any $a\in S^{m,\mu}$, $(m,\mu)\in\R^2$,
$\Op(a)$ is a linear continuous operator from $\scS(\R^d)$ to itself, extending to a linear
continuous operator from $\scS^\prime(\R^d)$ to itself, and from
$H^{z,\zeta}(\R^d)$ to $H^{z-m,\zeta-\mu}(\R^d)$,
where $H^{z,\zeta}=H^{z,\zeta}(\R^d)$,
$(z,\zeta) \in \R^2$, denotes the  Sobolev-Kato (or \textit{weighted Sobolev}) space
\begin{equation}\label{eq:skspace}
  	H^{z,\zeta}(\R^d)= \{u \in \scS^\prime(\R^{n}) \colon \|u\|_{z,\zeta}=
	\|{\japdot}^z\pdd^\zeta u\|_{L^2}< \infty\},
\end{equation}
(here $\pdd^\zeta$ is understood as a pseudodifferential operator) with the naturally induced Hilbert norm. When $z\ge z^\prime$ and $\zeta\ge\zeta^\prime$, the continuous embedding 
$H^{z,\zeta}\hookrightarrow H^{z^\prime,\zeta^\prime}$ holds true. It is compact when $z>z^\prime$ and $\zeta>\zeta^\prime$.
Since $H^{z,\zeta}=\japdot^z\,H^{0,\zeta}=\japdot^z\, H^\zeta$, with $H^\zeta$ the usual Sobolev space of order $\zeta\in\R$, we 
find $\zeta>k+\dfrac{d}{2} \Rightarrow H^{z,\zeta}\hookrightarrow C^k(\R^d)$, $k\in\N_0$. One actually finds
\begin{equation}\label{eq:spdecomp}
	\bigcap_{z,\zeta\in\R}H^{z,\zeta}(\R^d)=H^{\infty,\infty}(\R^d)=\scS(\R^d),
	\quad
	\bigcup_{z,\zeta\in\R}H^{z,\zeta}(\R^d)=H^{-\infty,-\infty}(\R^d)=\scS^\prime(\R^d),
\end{equation}
as well as, for the space of \textit{rapidly decreasing distributions}, see \cite[Chap. VII, \S 5]{schwartz}, 
\begin{equation}\label{eq:rdd}
	\scS^\prime(\R^d)_\infty=\bigcap_{z\in\R}\bigcup_{\zeta\in\R}H^{z,\zeta}(\R^d).
\end{equation}
The continuity property of
the elements of $\Op(S^{m,\mu})$ on the scale of spaces $H^{z,\zeta}(\R^d)$, $(m,\mu),(z,\zeta)\in\R^2$, is expressed 
more precisely in the next theorem.
\begin{thm}[{\cite[Chap. 3, Theorem 1.1]{Cord}}] \label{thm:sobcont}
	Let $a\in S^{m,\mu}(\R^d)$, $(m,\mu)\in\R^2$. Then, for any $(z,\zeta)\in\R^2$, 
	$\Op(a)\in\mathcal{L}(H^{z,\zeta}(\R^d),H^{z-m,\zeta-\mu}(\R^d))$, and there exists a constant $C>0$,
	depending only on $d,m,\mu,z,\zeta$, such that
	\begin{equation}\label{eq:normsob}
		\|\Op(a)\|_{\scrL(H^{z,\zeta}(\R^d), H^{z-m,\zeta-\mu}(\R^d))}\le 
		C\vvvert a \vvvert_{\left[\frac{d}{2}\right]+1}^{m,\mu},
	\end{equation}
	where $[t]$ denotes the integer part of $t\in\R$.
\end{thm}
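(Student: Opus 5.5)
The plan is to reduce the statement to an $L^2$ estimate for symbols of order $(0,0)$, and then prove that estimate by a dyadic kernel argument in which only $\xi$-derivatives of order at most $[d/2]+1$ are used. The decay of SG symbols makes this derivative count sufficient.

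\textbf{Reduction to $L^2$.} Since $\|u\|_{z,\zeta}=\|\japdot^z\pdd^\zeta u\|_{L^2}$, the claim is equivalent to
\[
\|\japdot^{z-m}\pdd^{\zeta-\mu}\,\Op(a)\,\pdd^{-\zeta}\japdot^{-z}\|_{\scrL(L^2)}\le C\vvvert a\vvvert^{m,\mu}_{[d/2]+1}.
\]
Multiplication by $\japdot^{z-m}$ on the left and composition with $\pdd^{-\zeta}$ on the right act exactly on symbols: the resulting symbol is $\x^{z-m}a(x,\xi)\csi^{-\zeta}$. By the Leibniz rule, its seminorms of order $\ell$ in the appropriate class are bounded by $C\vvvert a\vvvert^{m,\mu}_\ell$.

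The two remaining factors, $\pdd^{\zeta-\mu}$ on the left and $\japdot^{-z}$ on the right, are not exact. I would commute them through $\Op(a)$ at the operator level rather than via the asymptotic expansion \eqref{eq:comp}. Concretely, write $\pdd^{\zeta-\mu}\Op(a)=\Op(a)\pdd^{\zeta-\mu}+[\pdd^{\zeta-\mu},\Op(a)]$ and treat the commutator as a Taylor remainder. Each term involves one more $x$-derivative on $a$ and one fewer power of $\csi$. Iterating finitely many times, the terms become $\Op(\tilde a)$ with $\tilde a$ in a class of order $(0,0)$ up to a remainder of very negative order. Because the fixed weights are explicit, the remainder can be handled with integrations by parts falling only on the weights, not on $a$. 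Here I must check that the derivatives placed on $a$ never exceed $[d/2]+1$ in total. This is where the bookkeeping is delicate, so I would first try to arrange it so that only $\partial_x^\alpha$ with $|\alpha|\le1$ ever hits $a$. This reduces the theorem to the case $m=\mu=0$, $z=\zeta=0$.

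\textbf{$L^2$ bound for $a\in S^{0,0}$.} Take a Littlewood--Paley partition $1=\sum_{k\ge0}\psi_k(\xi)$ with $\supp\psi_k\subset\{\csi\sim2^k\}$, and set $a_k=a\psi_k$. Then $\Op(a_k)u(x)=\int K_k(x,x-y)u(y)\,dy$ with $K_k(x,\cdot)=\scF^{-1}_{\xi}a_k(x,\cdot)$. Apply Plancherel in $\xi$ to $(1-\Delta_\xi)^{\ell/2}a_k$ with $\ell=[d/2]+1$. The SG estimate $|\partial_\xi^\beta a|\le\vvvert a\vvvert^{0,0}_\ell\csi^{-|\beta|}$ on a set of volume $\sim2^{kd}$ gives
\[
\int\norm{2^k w}^{2\ell}|K_k(x,w)|^2dw\lesssim 2^{kd}\big(\vvvert a\vvvert^{0,0}_\ell\big)^2 .
\]
Since $2\ell>d$, Cauchy--Schwarz yields $\sup_x\int|K_k(x,w)|\,dw\lesssim\vvvert a\vvvert^{0,0}_\ell$ uniformly in $k$. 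Schur's test then gives $\|\Op(a_k)\|_{\scrL(L^2)}\lesssim\vvvert a\vvvert^{0,0}_\ell$, but only with $\sup_x$ control; the $y$-side Schur integral needs the same bound for $\sup_y$.

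\textbf{Summation in $k$ (main obstacle).} Summing the pieces over $k$ is where I expect the main difficulty. Plain summation loses a factor growing with $k$, so I would use Cotlar--Stein almost orthogonality. The products $\Op(a_k)^*\Op(a_j)$ and $\Op(a_k)\Op(a_j)^*$ have norms that should decay like $2^{-|j-k|}$. For $\Op(a_k)\Op(a_j)^*$, this decay comes from one integration by parts in $x$: by \eqref{eq:disSG}, each $x$-derivative of $a$ costs nothing in $\csi$ and gains $\x^{-1}$, while the frequency separation $|\xi-\eta|\gtrsim2^{\max(j,k)}$ for $|j-k|\ge2$ produces the gain. For $\Op(a_k)^*\Op(a_j)$, orthogonality is immediate from the disjoint frequency supports of $\psi_k,\psi_j$. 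I would first try to carry out this integration by parts using only $\partial_x^\alpha$ with $|\alpha|\le1$, together with the $\xi$-derivatives already used above, so that the total order stays at most $[d/2]+1$ and the final constant depends only on $d,m,\mu,z,\zeta$, as claimed in \eqref{eq:normsob}. If the single $x$-derivative does not give enough decay for Cotlar--Stein, the fallback is to split $\{\x\lesssim2^k\}$ from $\{\x\gg2^k\}$ and exploit the SG decay in $x$ on the second region.
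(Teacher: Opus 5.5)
The paper gives no proof of this statement; it is quoted from \cite{Cord}. Judged on its own, your argument fails at the reduction step, and the failure is not a matter of bookkeeping.

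You want to commute $\pdd^{\zeta-\mu}$ and $\japdot^{-z}$ through $\Op(a)$ so that at most $[d/2]+1$ derivatives (ideally only $\partial_x^\alpha$, $|\alpha|\le1$) land on $a$, with the remainder handled by integrations by parts ``falling only on the weights''. This cannot be done. The symbol of $[\pdd^{s},\Op(a)]$ is $\sum_{\gamma\neq0}\frac{1}{\gamma!}\partial_\xi^\gamma\csi^{s}\,D_x^\gamma a$, and the remainder after $N$ terms still contains $D_x^\gamma a$ with $|\gamma|=N$. For $a=a(x)$ the commutator is literally $[\pdd^{s},a]$, whose size depends only on the smoothness of $a$.

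In fact \eqref{eq:normsob} with the index frozen at $[d/2]+1$ is false. Take $d=1$ (so the index is $1$), $m=\mu=z=0$, $\zeta=2$. Fix $\chi\in C_0^\infty(\R)$ with $\chi=1$ on $[-1,1]$, and let $a_n(x,\xi)=n^{-1}\sin(nx)\chi(x)\in S^{0,0}$. Then $\vvvert a_n\vvvert^{0,0}_1\le C_\chi$ for every $n$. On the other hand, $\Op(a_n)$ is multiplication by $a_n$, so for $u\in\scS(\R)$ with $u=1$ on $\supp\chi$,
\[
	\|\Op(a_n)u\|_{0,2}\ge\|\partial_x^2(a_nu)\|_{L^2(-1,1)}=n\,\|\sin(n\,\cdot)\|_{L^2(-1,1)}\ge\frac{n}{\sqrt2}\longrightarrow+\infty .
\]
Variants show that the seminorm index must depend on $(m,\mu,z,\zeta)$ and not only on $d$: use $n^{-\ell}\sin(nx_1)\chi(x)$ in dimension $d$, the analogous construction in $\xi$ for the weight $z$, or an extra factor $\csi^{\mu}$.

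What is true is \eqref{eq:normsob} with $\vvvert a\vvvert^{m,\mu}_{\ell}$ for some $\ell=\ell(d,m,\mu,z,\zeta)$, and your architecture proves this once you let the number of derivatives grow. The SG composition formula with remainder estimates reduces to an $S^{0,0}$ symbol whose seminorms are controlled by finitely many seminorms of $a$, and then an $L^2$ theorem applies. This weaker form is all that is needed for Remark \ref{rem:O}(ii). It is also enough to turn the uniform-in-$s$ seminorm bounds of Proposition \ref{prop:cs_parametrix} into uniform operator bounds.

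The $L^2$ core also needs repair, because you have the Cotlar--Stein roles reversed. Since $\Op(a_j)=\Op(a_j)\tilde\psi_j(D)$ with $\tilde\psi_j=1$ on $\supp\psi_j$, you get $\Op(a_k)\Op(a_j)^*=\Op(a_k)\tilde\psi_k(D)\overline{\tilde\psi_j}(D)\Op(a_j)^*=0$ for $|j-k|$ large. It is $\Op(a_k)^*\Op(a_j)$ that needs $x$-regularity.

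Done in the natural way, the integration by parts in $x$ there leaves dyadic operators with symbols $\partial_x a_j$ or $\partial_x a_k$. Their kernel bound uses $\partial_x\partial_\xi^\beta a$ with $|\beta|\le[d/2]+1$. So even for $m=\mu=z=\zeta=0$ the plan costs $[d/2]+2$ derivatives in total and does not land on $\vvvert a\vvvert^{0,0}_{[d/2]+1}$.

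Your worry about the $\sup_y$ Schur integral, on the other hand, is unfounded. Apply Cauchy--Schwarz in $w$ against $\norm{2^kw}^{-2\ell}$, using your bound on $\int\norm{2^kw}^{2\ell}|K_k(x,w)|^2\,dw$ uniformly in $x$. This gives $\|\Op(a_k)\|_{\scrL(L^2)}\lesssim\vvvert a\vvvert^{0,0}_\ell$ directly.
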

The class $\caO(m,\mu)$ of the \textit{operators of order $(m,\mu)$} is introduced as follows (see, e.g., \cite[Chap. 3, \S 3]{Cord}).
\begin{defn}\label{def:ordmmuopr}
	A linear continuous operator $A\colon\scS(\R^d)\to\scS(\R^d)$
	belongs to the class $\caO(m,\mu)$, $(m,\mu)\in\R^2$, of the operators of order $(m,\mu)$ if, for any $(z,\zeta)\in\R^2$,
	it extends to a linear continuous operator $A_{z,\zeta}\colon H^{z,\zeta}(\R^d)\to H^{z-m,\zeta-\mu}(\R^d)$. We also define
	\[
		\caO(\infty,\infty)=\bigcup_{(m,\mu)\in\R^2} \caO(m,\mu), \quad
		\caO(-\infty,-\infty)=\bigcap_{(m,\mu)\in\R^2} \caO(m,\mu).		
	\]
\end{defn}
\begin{rem}\label{rem:O}
	\begin{enumerate}
		\item[(i)] Trivially, any $A\in\caO(m,\mu)$ admits a linear continuous extension 
		$A_{\infty,\infty}\colon\scS^\prime(\R^d)\to\scS^\prime(\R^d)$. In fact, in view of \eqref{eq:spdecomp}, it is enough to set
		$A_{\infty,\infty}|_{H^{z,\zeta}(\R^d)}= A_{z,\zeta}$.
		\item[(ii)] Theorem \ref{thm:sobcont} implies $\Op(S^{m,\mu}(\R^d))\subset\caO(m,\mu)$, $(m,\mu)\in\R^2$.
		\item[(iii)] $\caO(\infty,\infty)$ and $\caO(0,0)$ are algebras under operator multiplication, $\caO(-\infty,-\infty)$ is an ideal
		of both  $\caO(\infty,\infty)$ and $\caO(0,0)$, and 
		$\caO(m_1,\mu_1)\circ\caO(m_2,\mu_2)\subset\caO(m_1+m_2,\mu_1+\mu_2)$.
	\end{enumerate}
\end{rem}
\noindent
The following characterization of the class $\caO(-\infty,-\infty)$ is often useful.
\begin{prop}[{\cite[Ch. 3, Prop. 3.4]{Cord}}] \label{thm:smoothing}
	The class $\caO(-\infty,-\infty)$ coincides with $\Op(S^{-\infty,-\infty}(\R^d))$ and with the class of smoothing operators,
	that is, the set of all the linear continuous operators $A\colon\scS^\prime(\R^d)\to\scS(\R^d)$. All of them coincide with the
	class of linear continuous operators $A$ admitting a Schwartz kernel $k_A$ belonging to $\scS(\R^{2d})$. 
\end{prop}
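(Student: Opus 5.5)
We need to prove three equivalences: $\caO(-\infty,-\infty)$, $\Op(S^{-\infty,-\infty})$, the continuous maps $\scS'\to\scS$, and the operators with kernels in $\scS(\R^{2d})$. The plan is a cycle of inclusions
\[
\Op(S^{-\infty,-\infty})\subset\{k_A\in\scS(\R^{2d})\}\subset\scL(\scS',\scS)\subset\caO(-\infty,-\infty)\subset\Op(S^{-\infty,-\infty}).
\]

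\textbf{First inclusion.} For $a\in S^{-\infty,-\infty}=\scS(\R^{2d})$, the kernel of $\Op(a)$ is
\[
k(x,y)=(2\pi)^{-d}\int e^{\ii(x-y)\xi}a(x,\xi)\,d\xi,
\]
the partial inverse Fourier transform of $a$ in $\xi$, evaluated at $(x,x-y)$. Partial Fourier transform and the linear change of variables $(x,y)\mapsto(x,x-y)$ both preserve $\scS(\R^{2d})$, so $k\in\scS(\R^{2d})$.

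\textbf{Second inclusion.} If $k_A\in\scS(\R^{2d})$, then $Au(x)=\langle u,k_A(x,\cdot)\rangle$ makes sense for every $u\in\scS'$. Since $\scS$ is a Montel space, strong boundedness of $u$ shows that $x\mapsto \langle u,k_A(x,\cdot)\rangle$ lies in $\scS$. Continuity $\scS'\to\scS$ follows by estimating $\sup_x \x^N|\partial^\alpha_x Au(x)|$ through a continuous seminorm of $u$ on bounded sets, since $\{\x^N\partial_x^\alpha k_A(x,\cdot)\}_x$ is bounded in $\scS$.

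\textbf{Third inclusion.} A continuous $A\colon\scS'\to\scS$ restricts to each $H^{z,\zeta}\hookrightarrow\scS'$. It maps into $\scS\hookrightarrow H^{z-m,\zeta-\mu}$ for all $m,\mu$, and both embeddings are continuous, so $A\in\caO(m,\mu)$ for every $(m,\mu)$.

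\textbf{Fourth inclusion (main obstacle).} Let $A\in\caO(-\infty,-\infty)$. Then for every $N$, $A$ is bounded $H^{-N,-N}\to H^{N,N}$. I would show that the kernel $k_A\in\scS'(\R^{2d})$ given by Schwartz's kernel theorem actually lies in $\scS(\R^{2d})$. The idea is to write
\[
\x^{N}\langle D_x\rangle^{N}\,k_A\,\langle D_y\rangle^{N}\langle y\rangle^{N},
\]
which is the kernel of $\japdot^N\pdd^N A\pdd^N\japdot^N$. That operator is bounded $H^{-N',-N'}\to L^2$ for suitable $N'$, and composing with embeddings of Hilbert--Schmidt type shows it is Hilbert--Schmidt on $L^2$. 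For instance, $\japdot^{-M}\pdd^{-M}$ is Hilbert--Schmidt for $M>d/2$, and $\caO$ is closed under composition (Remark \ref{rem:O}(iii)). Hence its kernel is in $L^2(\R^{2d})$ for every $N$. This gives weighted Sobolev regularity of all orders of $k_A$ on $\R^{2d}$, because mixed weights $\x^N\langle y\rangle^N$ together with mixed derivatives dominate $\langle(x,y)\rangle^{N}$ and full derivatives up to constants. By \eqref{eq:spdecomp} applied in dimension $2d$, $k_A\in\scS(\R^{2d})$.

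\textbf{Closing the cycle.} Recover the symbol via
\[
a(x,\xi)=\int e^{-\ii z\xi}k_A(x,x-z)\,dz,
\]
which is again Schwartz by the first step read backwards. So $A=\Op(a)$ with $a\in S^{-\infty,-\infty}$.

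The delicate point is the Hilbert--Schmidt / mixed-regularity argument in the fourth inclusion, in particular checking that the mixed norms control the full $H^{N,N}(\R^{2d})$ norms. Alternatively, one could cite the corresponding result in \cite{Cord}.
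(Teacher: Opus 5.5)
The paper gives no proof of this proposition. It is quoted from Cordes' book and used as a black box, so your cycle of inclusions is a self-contained argument rather than a variant of an in-text one. The cycle is correct.

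\emph{First, third and closing steps.} These are fine as written:
\begin{itemize}
\item the partial Fourier transform in $\xi$ together with the invertible change $(x,y)\mapsto(x,x-y)$, used in both directions;
\item restriction of $A$ to $H^{z,\zeta}\hookrightarrow\scS'$, followed by $\scS\hookrightarrow H^{z-m,\zeta-\mu}$, with uniqueness of the extensions coming from the density of $\scS$.
\end{itemize}

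\emph{Second step.} The Montel property is not what you actually use. What you need is that $x\mapsto k_A(x,\cdot)$ is a $C^\infty$ map $\R^d\to\scS(\R^d)$, so that $\partial_x^\alpha Au(x)=\langle u,\partial_x^\alpha k_A(x,\cdot)\rangle$. You also need the set $B_{N\alpha}=\{\x^N\partial^\alpha_x k_A(x,\cdot)\colon x\in\R^d\}$ to be bounded in $\scS(\R^d)$. Then $\sup_x\x^N|\partial^\alpha_x Au(x)|\le\sup_{\phi\in B_{N\alpha}}|\langle u,\phi\rangle|$, which is a continuous seminorm for the strong topology of $\scS'$.

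\emph{Fourth step.} The point you flag can be settled without comparing mixed and full weighted norms at all, by changing the outer factors. Take $\gamma=(\gamma_1,\gamma_2)$, $\delta=(\delta_1,\delta_2)\in\N_0^{2d}$ and consider
\[
x^{\gamma_1}\partial_x^{\delta_1}\circ A\circ(-\partial_y)^{\delta_2}y^{\gamma_2}.
\]
\begin{itemize}
\item This operator lies in $\caO(-\infty,-\infty)$ by Remark \ref{rem:O}(iii).
\item The transpose of $(-\partial_y)^{\delta_2}y^{\gamma_2}$ is $y^{\gamma_2}\partial_y^{\delta_2}$, so the kernel of the operator is exactly $z^\gamma\partial_z^\delta k_A$, with $z=(x,y)$.
\item Your Hilbert--Schmidt factorisation through $\japdot^{-M}\pdd^{-M}$, $M>d/2$, then applies. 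That operator has kernel $\x^{-M}G_M(x-y)$ with $\widehat{G_M}=\csi^{-M}$, which lies in $L^2(\R^{2d})$.
\item Hence $z^\gamma\partial_z^\delta k_A\in L^2(\R^{2d})$ for all $\gamma,\delta$, which is a standard characterisation of $k_A\in\scS(\R^{2d})$.
\end{itemize}

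If you prefer to keep the weights $\japdot^N\pdd^N$, the comparison you need follows from a tensor-product argument. Write $x^{\gamma_1}\partial_x^{\delta_1}=T_1\,\x^N\langle D_x\rangle^N$ with $T_1=x^{\gamma_1}\partial_x^{\delta_1}\langle D_x\rangle^{-N}\x^{-N}\in\Op(S^{0,0}(\R^d))$ for $N\ge|\gamma_1|,|\delta_1|$, and similarly $y^{\gamma_2}\partial_y^{\delta_2}=T_2\,\langle y\rangle^N\langle D_y\rangle^N$. Since $T_1\otimes T_2$ is bounded on $L^2(\R^{2d})$, every $z^\gamma\partial^\delta_z k_A$ is controlled by the $L^2$ norm of the kernel of $\japdot^N\pdd^N A\pdd^N\japdot^N$.
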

An operator $A=\Op(a)$ and its symbol $a\in S ^{m,\mu}$ are called \emph{elliptic}
(or $S ^{m,\mu}$-\emph{elliptic} or \emph{md-elliptic}) if there exists $R\ge0$ such that
%and
%
\[
	C\x^{m} \csi^{\mu}\le |a(x,\xi)|,\qquad 
	|x|+|\xi|\ge R,
\] 
for some constant $C>0$. If $R=0$, $a^{-1}$ is everywhere well-defined and smooth, and $a^{-1}\in S ^{-m,-\mu}$.
If $R>0$, then $a^{-1}$ can be extended to the whole of $\R^{2d}$ so that the extension $\widetilde{a}_{-1}$ satisfies $\widetilde{a}_{-1}\in S ^{-m,-\mu}$.
An elliptic SG-operator $A \in \Op (S ^{m,\mu})$ admits a
parametrix $A_{-1}\in \Op (S ^{-m,-\mu})$ such that
\[
A_{-1}A=I + R_1, \quad AA_{-1}= I+ R_2,
\]
for suitable $R_1, R_2\in\Op(S^{-\infty,-\infty})$, where $I$ denotes the identity operator. 
In such a case, $A$ turns out to be a Fredholm
operator on the scale of functional spaces $H^{z,\zeta}$,
$(z,\zeta)\in\R^2$.

In a similar fashion, an operator $A=\Op(a)$ and its symbol $a\in S ^{m,\mu}$ are called \emph{SG-hypoelliptic}
(or \textit{$(m,\mu,m^\prime,\mu^\prime)$( SG-)hypoelliptic}) if there exists $R\ge0$, $m^\prime,\mu^\prime\in\R$, $m^\prime\le m$, $\mu^\prime\le \mu$, such that
%and
%
\begin{equation}\label{eq:hypoell_a}
	C\x^{m^\prime} \csi^{\mu^\prime}\le \Re(a(x,\xi)),\qquad 
	|x|+|\xi|\ge R,
\end{equation}
for some constant $C>0$ and, for all multi-indices $\alpha, \beta\in \N^d$ there exist constants $C_{\alpha \beta}>0$ such that 
\begin{equation}\label{eq:hypoell_b}
\left| \frac{\partial_x^\alpha\partial_\xi^\beta a(x,\xi)}{\Re(a(x,\xi))}\right|\leq C_{\alpha \beta} \<x\>^{-|\alpha|}\<\xi\>^{-|\beta|},
\end{equation}
for any $(x,\xi)\in \R^{d}\times \R^d$ with $|x|+|\xi|\geq R$. Notice that for any hypoelliptic symbol $a$
there exists $a_0\in C_0^\infty(\R^d\times\R^d)$ such that $\widetilde{a}=a+a_0$ satisfies \eqref{eq:hypoell_a}
with $R=0$ and a different constant $C>0$. That is, the lower bound for the symbol $\widetilde{a}$ holds true on the whole $\R^d\times\R^d$. 
Indeed, assume, without loss of generality, that $\Re(a(x,\xi))>0$ for $|x|+|\xi|\ge R$, set $Q_R=\{(x,\xi)\in\R^d\times\R^d\colon |x|+|\xi|\le R\}$, 
and let $\chi\in C_0^\infty(\R^d\times\R^d)$ be a cut-off function such that $0\le\chi\le1$,
$\supp\chi\subseteq Q_{2R}$, and $\chi|_{Q_\frac{3R}{2}}\equiv1$. Let also 
\[
	\delta=\min_{|x|+|\xi|\le R} \<x\>^{-m^\prime}\<\xi\>^{-\mu^\prime}\Re(a(x,\xi))\in\R\text{ so that } 
	\forall x,\xi\in\R^d \; |x|+|\xi|\le R\Rightarrow \Re(a(x,\xi))\ge\delta\x^{m^\prime}\csi^{\mu^\prime},
\]
and set $a_0(x,\xi)=(1+|\delta|)\chi(x,\xi)\x^{m^\prime}\csi^{\mu^\prime}\in S^{-\infty,-\infty}\Leftarrow a_0\in C_0^\infty$.
Obviously, by construction, $a_0\ge0$, so \eqref{eq:hypoell_a} holds true for $\widetilde{a}$ when $|x|+|\xi|\ge R$, with the same constant $C$.
Moreover, for any $|x|+|\xi|\le R$,
\[
	\Re(\widetilde{a}(x,\xi))=\Re(a(x,\xi))+a_0(x,\xi)\ge \delta\x^{m^\prime}\csi^{\mu^\prime}+(1+|\delta|)\x^{m^\prime}\csi^{\mu^\prime}\ge\x^{m^\prime}\csi^{\mu^\prime}.
\]
The desired estimate for $\widetilde{a}$ then follows choosing $\widetilde{C}=\min\{1,C\}$. Also \eqref{eq:hypoell_b} holds true for 
$\widetilde{a}$, with different constants, for any $x,\xi\in\R^d$. Finally, also $(m,\mu,m^\prime,\mu^\prime)$-hypoelliptic operators admit a parametrix
(see, e.g., \cite[Ch. 2, \S\ 2]{Cord}).

\section{Construction of the parametrix}\label{app:cs_parametrix}
\setcounter{equation}{0}

We consider a real-valued symbol $a\in S^{m,\mu}(\R^d\times\R^d)$ which 
%\textcolor{blue}{non-negative}, 
satisfies the following assumptions:

\begin{enumerate}[label=\textbf{(H\arabic*)},ref=\textbf{(H\arabic*)}]
\item \label{hyp:a_symbol} there exist $m,\mu\in(0,+\infty)$ such that $a\in S^{m,\mu}(\R^d\times \R^d)$;
\item \label{hyp:a_hypoellyptic} $a$ is non-negative, and
there exist $R>0$, $m^\prime\in [0,m]$ and  $\mu^\prime\in [0,\mu]$ such that, for any $(x,\xi)\in \R^{d}\times \R^d$ with $|x|+|\xi|\geq R$,
\begin{equation}
a(x,\xi)\geq C \<x\>^{m^\prime}\<\xi\>^{\mu^\prime},
\end{equation}
for some contant $C>0$ independent of $x$ and $\xi$;
\item \label{hyp:a_derivatives} for all multi-indices $\alpha, \beta\in \N^d$ there exist constants $C_{\alpha \beta}>0$ such that 
\begin{equation}
 \frac{\left|\partial_x^\alpha\partial_\xi^\beta a(x,\xi)\right|}{a(x,\xi)}\leq C_{\alpha \beta} \<x\>^{-|\alpha|}\<\xi\>^{-|\beta|},
\end{equation}
for any $(x,\xi)\in \R^{d}\times \R^d$ with $|x|+|\xi|\geq R$.
\end{enumerate}
In short, $a$ is non-negative and $(m,\mu,m^\prime,\mu^\prime)$-hypoelliptic in the SG-calculus, with $m,\mu>0$, $m^\prime\in[0,m]$, $\mu^\prime\in[0,\mu]$.

\begin{rem}
\label{rem:bs_in_SG}
Let $s\in \C$ be such that $\Re s>\lambda>0$ and define $b_s(x,\xi):=s^r+a(x,\xi)$, $x,\xi\in\R^d$.
If $\lambda$ is sufficiently large then $b_s\not=0$, in view of assumption \ref{hyp:a_hypoellyptic} 
(see Remark \ref{rem:bs_hypoelliptic} below and the last paragraph of Section \ref{subs:sgcalc}),
and belongs to $S^{m,\mu}(\R^d\times \R^d)$, where $m$ and $\mu$ are given in assumption \ref{hyp:a_symbol}. Indeed, for every $\alpha, \beta \in \N^d$ there exists $\tilde{C}_{\alpha \beta}>0$ such that the inequality
\[|\partial_x^\alpha \partial_\xi^\beta a(x,\xi)|\leq \tilde{C}_{\alpha \beta}\<x\>^{m-|\alpha|}\<\xi\>^{\mu-|\beta|}\]
holds true for any $(x,\xi)\in \R^d\times \R^d$, as a consequence of assumption \ref{hyp:a_symbol}. Then, we may estimate
\[ |b_s(x,\xi)|\leq |s|^r (1+|s|^{-r}a(x,\xi)) \leq |s|^r \Big(1+\frac{\tilde C_{0 0} \<x\>^{m}\<\xi\>^\mu}{\lambda^r }\Big)\leq 2|s|^r \<x\>^m\<\xi\>^\mu,\]
provided that $\lambda\ge \tilde C_{00}^\frac{1}{r}$. Moreover, for any $\alpha, \beta \in \N^d$ with $|\alpha|+|\beta|\geq 1$, it holds 
\[ |\partial_x^\alpha\partial_\xi^\beta b_s(x,\xi)|=|\partial_x^\alpha\partial_\xi^\beta a(x,\xi)|\leq \tilde{C}_{\alpha \beta}\<x\>^{m-|\alpha|}\<\xi\>^{\mu-|\beta|}.\]
\end{rem}
\begin{rem}
\label{rem:bs_hypoelliptic}
For every $(x,\xi)\in \R^d\times \R^d$ with $|x|+|\xi|\geq R$ and $s=|s| e^{i \theta}$, $\Re s>\lambda>0$, it holds
\begin{equation}
	\label{eq:bs_hypoelliptic}
	\Re(b_s(x,\xi))= |s|^r\cos( r \theta ) + a(x,\xi)> a(x,\xi)\ge C \<x\>^{m^\prime}\<\xi\>^{\mu^\prime}\ge C>0,
\end{equation}
as a consequence of assumption \ref{hyp:a_hypoellyptic}. Indeed, being $\Re s>\lambda>0$ and $r\in (0,1)$, we know that $r\theta\in (-r\pi/2, r\pi/2) $, 
so that $\cos(r\theta)>\cos(r\pi/2)> 0$. Actually, for $\Re s>\lambda>0$, \eqref{eq:bs_hypoelliptic} holds true for
arbitrary $x,\xi\in\R^d$, possibly reducing $C$ to a smaller $\widetilde{C}>0$. 
In fact, $\Re s>\lambda>0$ implies $\Re s^r =|s|^r\cos(r\vartheta)> \lambda^r\cos(r\pi/2)=\kappa>0$. By assumption \ref{hyp:a_hypoellyptic},
$a$ is non-negative, and $|x|+|\xi|\le R$ implies $1\le\x^{m^\prime}\csi^{\mu^\prime}\le\langle R\rangle^{m^\prime+\mu^\prime}$, so that, 
\begin{align*}
	|x|+|\xi|\le R \Rightarrow \Re(b_s(x,\xi))& = |s|^r\cos( r \theta ) + a(x,\xi) \\
& \ge |s|^r\cos( r \theta )
	>\kappa\ge\frac{\kappa}{\langle R\rangle^{m^\prime+\mu^\prime}} \<x\>^{m^\prime}\<\xi\>^{\mu^\prime}.
\end{align*}
To achieve the lower bound \eqref{eq:bs_hypoelliptic} for arbitrary $x,\xi\in\R^d$
it is then enough to substitute, in the right hand side, the constant $C$ with the constant $\widetilde{C}=\min\left\{C,\dfrac{\kappa}{\langle R\rangle^{m^\prime+\mu^\prime}}\right\}>0$.

Moreover, by assumption \ref{hyp:a_derivatives},
for any $\alpha, \beta\in \N^d$ multi-indices, $|\alpha|+|\beta|\ge1$, being $|b_s(x,\xi)|\geq |\Re(b_s(x,\xi))|$, we may estimate   
\begin{equation}\label{eq:bsders}
\left| \frac{\partial_x^\alpha\partial_\xi^\beta b_s(x,\xi)}{b_s(x,\xi)}\right| 
\le \left|\frac{\partial_x^\alpha\partial_\xi^\beta a(x,\xi)}{\Re(b_s(x,\xi))}\right|
%\le \left|\frac{\partial_x^\alpha\partial_\xi^\beta a(x,\xi)}{a(x,\xi)}\right| 
\leq \widetilde{C}_{\alpha \beta} \<x\>^{-|\alpha|}\<\xi\>^{-|\beta|},
\end{equation}
for any $(x,\xi)\in\R^d\times\R^d$, with suitable $\widetilde{C}_{\alpha \beta}>0$.  
Indeed, for $|x|+|\xi|\ge R$, since $\Re s>\lambda >0$ implies $|\Re(b_s(x,\xi))|> a(x,\xi)$, \eqref{eq:bsders} holds true
choosing $\widetilde{C}_{\alpha \beta}=C_{\alpha\beta}>0$, the constants given in assumption \ref{hyp:a_derivatives}. 
For $|x|+|\xi|\le R$, since $\kappa=\dfrac{\kappa}{\langle R\rangle^{m+\mu}}\langle R\rangle^{m+\mu}\ge\dfrac{\kappa}{\langle R\rangle^{m+\mu}}\x^m\csi^\mu$
and $|\Re(b_s(x,\xi))|> \kappa$, recalling assumption \ref{hyp:a_symbol}, we find
\[
	\left| \frac{\partial_x^\alpha\partial_\xi^\beta b_s(x,\xi)}{b_s(x,\xi)}\right|
	\le \left|\frac{\partial_x^\alpha\partial_\xi^\beta a(x,\xi)}{\Re(b_s(x,\xi))}\right|
	\le \frac{\x^{m-|\alpha|}\csi^{\mu-|\beta|}}{\dfrac{\kappa}{\langle R\rangle^{m+\mu}}\x^m\csi^\mu}
	=\frac{\langle R\rangle^{m+\mu}}{\kappa} \x^{-|\alpha|}\csi^{-|\beta|}.
\]
We conclude that to achieve \eqref{eq:bsders} for arbitrary $x,\xi\in\R^d$ it is enough to choose 
$\widetilde{C}_{\alpha\beta}=\max\left\{C_{\alpha\beta},\dfrac{\langle R\rangle^{m+\mu}}{\kappa}\right\}$.
\end{rem}
\begin{rem}\label{rem:bs_hypoell}
By Remarks \ref{rem:bs_in_SG} and \ref{rem:bs_hypoelliptic}, we derive that, under the assumptions 
\ref{hyp:a_symbol}, \ref{hyp:a_hypoellyptic}, and \ref{hyp:a_derivatives}, the symbol $b_s(x,\xi)=s^r + a(x,\xi)$, $x,\xi\in\R^d$, 
is $(m,\mu,m^\prime,\mu^\prime)$ SG-hypoelliptic (see the last paragraph of Section \ref{subs:sgcalc}), 
for all $s\in \C$ with $\Re s>\lambda>0$, $\lambda$ sufficiently large. From now on we
will always assume $\Re s>\lambda>0$ so that this property holds true.
As a consequence, $\Op(b_s)$ admits a parametrix $\Op(c_s)$. 
\end{rem}

We now construct the parametrix $\Op(c_s)$ of $\Op(b_s)$, refining the 
classical approach (cf., e.g., \cite[Theorem 2.5]{Cord}). This is a variant of the classical results for the construction of the parametrix to the resolvent of
suitable elliptic operators, originally due to Seeley (see \cite{Seeley67}; see also \cite{MSS06} for the case of elliptic SG-classical operators). 

\begin{prop}\label{prop:cs_parametrix}
Let $a\in S^{m,\mu}(\R^d\times\R^d)$ be real-valued and satisfy assumptions 
	\ref{hyp:a_symbol}, \ref{hyp:a_hypoellyptic} and \ref{hyp:a_derivatives}. Setting $b_s(x,\xi)=s^r+a(x,\xi)$, $r\in(0,1)$,
there exists a family of symbols $c_s\in S^{-m^\prime,-\mu^\prime}(\R^d\times \R^d)$ such that, for any $s\in \C_\lambda=\{s\in\C\colon \Re s>\lambda>0\}$,
$\lambda$ sufficiently large, $\Op(c_s)$ is a parametrix of $\Op(b_s)$, that is,
\begin{equation}\label{eq:cs_par}
	\Op(b_s)\Op(c_s)=I+\Op(r_{1s}), \quad \Op(c_s)\Op(b_s)=I+\Op(r_{2s}),
\end{equation}
for suitable $r_{1s}, r_{2s}\in S^{-\infty, -\infty}(\R^d\times \R^d)$. More precisely, there exist symbols 
$Q_j\in S^{(j+2)m-j-1, (j+2)\mu-j-1}$, $j\in \N$, independent of $s$, such that $c_s$ is given by the asymptotic sum
\begin{equation}
\label{eq:cs_asymptoticsum}
c_s(x,\xi)\sim \frac{1}{s^r+a(x,\xi)} + \sum_{j\in\N} \frac{Q_j(x,\xi)}{[s^r+a(x,\xi)]^{j+3}} = \sum_{j\in\N}\frac{A_j(x,\xi)}{[s^r+a(x,\xi)]^{j+1}},
\end{equation}
$A_0\equiv 1$, $A_1\equiv0$, $A_j=Q_{j-2} \in S^{jm-j+1,j\mu-j+1}$, $j\ge2$, and satisfies, for any $k\in \N$, the estimates
\[ 
	||| c_s|||^{-m^\prime,-\mu^\prime}_k\leq C_k,
\]
for suitable constants $C_k>0$, independent of $s\in\C_\lambda$, and the seminorms defined in \eqref{eq:SGseminorm}. 
In particular, for any $j\in \N$, $j\ge2$, the symbol $A_j$ admits an asymptotic expansion of the form
\begin{equation}
\label{eq:asymptotic_Aj} 
A_j\sim \sum_{|\theta|>j-2}\tilde{P}_{j}^{\theta\theta}, \quad \text{where} \quad \tilde{P}_{j}^{\theta\theta}\in \vspan\left\{\partial^{\theta_1}_x\partial^{\sigma_1}_\xi a(x,\xi)\cdots\partial^{\theta_j}_x\partial^{\sigma_j}_\xi a(x,\xi)
		\colon \sum_{k=1}^j\theta_k=\sum_{k=1}^j\sigma_k=\theta \right\}.
\end{equation}
\end{prop}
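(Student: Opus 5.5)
Following the classical hypoelliptic parametrix construction (cf.\ \cite[Theorem 2.5]{Cord}), we define $c_s$ as an asymptotic sum of terms $c_{s,k}$ determined by the composition formula \eqref{eq:comp}, and we track the $s$-dependence of each term through powers of $b_s^{-1}$. Set $c_{s,0}=b_s^{-1}$. Then, recursively, for $k\ge1$,
\[
	c_{s,k}=-b_s^{-1}\sum_{\substack{|\alpha|+l=k\\ l<k}}\frac{1}{\alpha!}\,\partial_\xi^\alpha c_{s,l}\,D_x^\alpha b_s .
\]
This is the left parametrix; the right one is analogous, and the two agree modulo $S^{-\infty,-\infty}$ by the usual argument. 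Since $D_x^\alpha b_s=D_x^\alpha a$ for $|\alpha|\ge1$, and since each derivative of $b_s^{-n}$ produces a factor $-n\,b_s^{-n-1}\partial a$, induction on $k$ gives the structural formula
\[
	c_{s,k}=\sum_{n}\frac{P_{k,n}}{b_s^{\,n+1}},
\]
where each $P_{k,n}$ is independent of $s$ and is a linear combination of products of $n$ derivatives $\partial_x^{\theta_i}\partial_\xi^{\sigma_i}a$ with $\sum\theta_i=\sum\sigma_i$ of total length $k$. Here $n$ ranges over $2\le n\le 2k$ for $k\ge1$, and $c_{s,0}$ gives $A_0=1$. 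Regrouping by powers of $b_s^{-1}$ yields $A_j=\sum_k P_{k,j}$, with $A_1=0$ because no term with exactly one factor appears. The condition $k\ge j/2$, i.e.\ $|\theta|>j-2$ in the stated form, gives \eqref{eq:asymptotic_Aj}. Each product lies in $S^{jm-k,j\mu-k}$, so $A_j\in S^{jm-j+1,j\mu-j+1}$ after asymptotic summation over $k\ge j-1$. Setting $Q_j=A_{j+2}$ gives \eqref{eq:cs_asymptoticsum}.

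The key estimates are uniform in $s$. By \eqref{eq:bs_hypoelliptic} and \eqref{eq:bsders}, for every $s\in\C_\lambda$ we have $|b_s|\ge\widetilde C\x^{m'}\csi^{\mu'}$, and $|\partial_x^\alpha\partial_\xi^\beta b_s|/|b_s|\le \widetilde C_{\alpha\beta}\x^{-|\alpha|}\csi^{-|\beta|}$ with constants independent of $s$. For a product $P$ of $n$ derivatives of $a$ of total order $(k,k)$, hypothesis \ref{hyp:a_derivatives}, together with $a\le|b_s|$, gives
\[
	|P|\le C\,|b_s|^{n}\x^{-k}\csi^{-k}.
\]
Hence $|P\,b_s^{-n-1}|\le C\,|b_s|^{-1}\x^{-k}\csi^{-k}\le C'\x^{-m'-k}\csi^{-\mu'-k}$ uniformly in $s$. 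Derivatives are handled by the Leibniz rule and the same ratio bounds. Therefore $c_{s,k}\in S^{-m'-k,-\mu'-k}$ with seminorms bounded independently of $s$. The region $|x|+|\xi|\le R$ is covered by Remark \ref{rem:bs_hypoelliptic}.

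For the asymptotic summation, we take $c_s=\sum_k \chi(\epsilon_k x,\epsilon_k\xi)\,c_{s,k}$ with $1-\chi$ vanishing near the origin. The sequence $\epsilon_k\to0$ can be chosen independently of $s$, because the seminorm bounds are uniform. This gives $\vvvert c_s\vvvert_k^{-m',-\mu'}\le C_k$, and each truncation remainder lies in $S^{-m'-N,-\mu'-N}$ with uniform bounds. The composition remainder estimate, from the integral form of the remainder in \eqref{eq:comp} applied to $\Op(c_s)\Op(b_s)$, then gives $r_{2s}\in S^{-\infty,-\infty}$. Symmetrically we obtain $r_{1s}$.

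I expect the main obstacle to be the uniform control of the remainders. The factor $b_s\in S^{m,\mu}$ has size of order $|s|^r$, which is not uniform in $s$. In every composition term we therefore need to pair each occurrence of $b_s$ with a factor $b_s^{-1}$ coming from $c_s$, rather than estimate the two separately. I would handle this by working with the weight $|b_s|$ itself, in the spirit of a symbol class adapted to $b_s$, in the oscillatory-integral remainder. I would aim to show that $r_{is}$ lies in $S^{-N,-N}$ for every $N$, with seminorms bounded by powers of the weight $|b_s|$ and its inverse that cancel.
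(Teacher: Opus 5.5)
Your order-by-order recursion differs from the paper's scheme. You cancel everything of SG-order $k$ at step $k$ and then regroup by powers of $b_s^{-1}$; the paper instead cancels, at each step, the whole coefficient of the lowest power $b_s^{-(k+2)}$ in $e_{ks}$. That is a legitimate alternative, and your structural formula, the uniform bounds on each (finite) $c_{s,k}$, $A_0=1$, $A_1=0$ and the $s$-independence are fine.

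The gap is the sentence ``the condition $k\ge j/2$, i.e.\ $|\theta|>j-2$ \ldots\ so $A_j\in S^{jm-j+1,j\mu-j+1}$ after asymptotic summation over $k\ge j-1$''. The two conditions agree only for $j\le 3$. Your own range $2\le n\le 2k$ gives only $k\ge\lceil j/2\rceil$, and that range is sharp. Concretely, for $d=1$ your recursion gives $c_{s,1}=-i\,\partial_\xi a\,\partial_x a\,b_s^{-3}$ and
\[
c_{s,2}=-3(\partial_\xi a)^2(\partial_x a)^2\,b_s^{-5}+(\text{terms in } b_s^{-3},\ b_s^{-4}).
\]
All products in $c_{s,k}$, $k\ge 3$, have order $k\ge3$, so $A_4\equiv-3(\partial_\xi a)^2(\partial_x a)^2$ modulo $S^{4m-3,4\mu-3}$. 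Take $a=\jap{x}^2\jap{\xi}^2$, which satisfies \ref{hyp:a_symbol}--\ref{hyp:a_derivatives} with $m=m^\prime=\mu=\mu^\prime=2$. Then this term equals $-48x^2\xi^2\jap{x}^4\jap{\xi}^4$, which is not $O(\jap{x}^5\jap{\xi}^5)$, so $A_4\notin S^{4m-3,4\mu-3}$. What your argument actually proves is $A_j\in S^{jm-\lceil j/2\rceil,\,j\mu-\lceil j/2\rceil}$, with \eqref{eq:asymptotic_Aj} running over $|\theta|\ge j/2$.

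Following the paper will not close this gap. Its proof obtains $Q_j\in S^{(j+2)m-j-1,(j+2)\mu-j-1}$ from \eqref{eq:wte0sr}, whose second sum is restricted to $|\theta|>j$. However, \eqref{eq:wte0sae} also contains the terms with $\gamma=0$ and $j=|\theta|$; these have $j+3$ factors and total order as low as $j+1$. For $d=1$ they put $3(\partial_\xi a)^2(\partial_x a)^2b_s^{-4}$ into $e_{1s}$. Consequently $Q_{1,1}$ lies only in $S^{4m-2,4\mu-2}$, $e_{1sr}\notin S^{-3,-3}$ in general, and the paper's $Q_2$ carries the same term $-3(\partial_\xi a)^2(\partial_x a)^2$ as your $A_4$. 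So, in either approach, the orders asserted for $A_j$, $j\ge4$ (equivalently $Q_j$, $j\ge 2$), and the range in \eqref{eq:asymptotic_Aj} must be weakened as above. The existence of $c_s$, the uniform bounds $|||c_s|||^{-m^\prime,-\mu^\prime}_k\le C_k$, and the span structure of the $A_j$ survive, since the orders still tend to $-\infty$.

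Two smaller points. First, the remainder of the asymptotic sum defining $A_j$ is not a product of derivatives of $a$. After division by $b_s^{j+1}$ it therefore gains only if the truncation depth $K$ makes $K-j(m-m^\prime)$ and $K-j(\mu-\mu^\prime)$ large; this is the device of step ii) in the paper. Second, the uniformity of $r_{1s},r_{2s}$ in $s$ that your last paragraph worries about is not part of this statement. For fixed $s$ the standard composition remainder suffices, and uniformity in $s$ is the content of Corollary \ref{cor:rs}.
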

%
%\begin{rem}
%In Proposition \ref{prop:cs_parametrix} for any $\ell, \rho\in \R$ and $p\in S^{\ell, \rho}(\R^d\times \R^d)$ we are considering the family of seminorms
%%
%\[ |||p|||_k=\sup_{|\alpha|+|\beta|\leq k}\sup_{(x,y)\in \R^{2d}}|\partial_x^\alpha \partial_\xi^\beta p(x,\xi)|\<x\>^{-\ell+|\alpha|}\<\xi\>^{-\rho+|\beta|},\]
%%
%with $k\in \N$, which defines a Fr\'echet topology on $S^{\ell,\rho}(\R^d\times \R^d)$ (notice that these seminorms are actually norms).
%\end{rem}
%%

We need the next Lemma \ref{lem:deriv}, which can be proved by induction on the heights of the
involved multi-indices. The details are left for the reader.
\begin{lem}\label{lem:deriv}
	Let $p\in S^{m,\mu}(\R^d\times \R^d)$ and $\theta,\sigma\in\Z_+^d$ be such that $|\theta+\sigma|\ge1$.  Assume  that $p(x,\xi)$ is different from 
	zero for $x,\xi\in\R^d$. Then,
	\[
		\partial^\theta_x\partial^\sigma_\xi\left[\frac{1}{p(x,\xi)}\right]=\sum_{j=1}^{|\theta+\sigma|}\frac{P^{\theta\sigma}_j(x,\xi)}{[p(x,\xi)]^{j+1}},
	\]
	with
	\begin{equation}\label{eq:Pst}
		P^{\theta\sigma}_j(x,\xi)\in\vspan\left\{\partial^{\theta_1}_x\partial^{\sigma_1}_\xi p(x,\xi)\cdots\partial^{\theta_j}_x\partial^{\sigma_j}_\xi p(x,\xi)
		\colon \sum_{k=1}^j\theta_k=\theta,\sum_{k=1}^j\sigma_k=\sigma\right\},
	\end{equation}
	so that $P^{\theta\sigma}_j\in S^{jm-|\theta|,j\mu-|\sigma|}(\R^d\times \R^d)$. Moreover, 
	if $p$ is $(m,\mu,m^\prime,\mu^\prime)$ SG-hypoelliptic, it also holds
	\[
		\frac{P^{\theta\sigma}_j(x,\xi)}{[p(x,\xi)]^j}\in S^{-|\theta|,-|\sigma|}(\R^d\times \R^d), 
		\frac{P^{\theta\sigma}_j(x,\xi)}{[p(x,\xi)]^{j+1}}\in S^{-m^\prime-|\theta|,-\mu^{\prime}-|\sigma|}(\R^d\times \R^d), 
		\quad \theta,\sigma\in\Z_+^d,j=1,\dots,|\theta+\sigma|.
	\]
	More generally, for any $\tau\in\N$ and $\theta,\sigma\in\Z_+^d$ such that $|\theta+\sigma|\ge1$,
	\[
		\partial^\theta_x\partial^\sigma_\xi\left\{\frac{1}{[p(x,\xi)]^\tau}\right\}=\sum_{j=1}^{|\theta+\sigma|}\frac{P^{\tau,\theta\sigma}_{j}(x,\xi)}{[p(x,\xi)]^{\tau+j}},
	\]
	with
	\[
		P^{\tau,\theta\sigma}_{j}(x,\xi)\in\vspan\left\{\partial^{\theta_1}_x\partial^{\sigma_1}_\xi p(x,\xi)\cdots\partial^{\theta_j}_x\partial^{\sigma_j}_\xi p(x,\xi)
		\colon \sum_{k=1}^j\theta_k=\theta,\sum_{k=1}^j\sigma_k=\sigma\right\}.
	\]
	Moreover, if $p$ is $(m,\mu,m^\prime,\mu^\prime)$ SG-hypoelliptic on $\R^d\times\R^d$, it also holds
	\[
		\frac{P^{\tau,\theta\sigma}_{j}(x,\xi)}{[p(x,\xi)]^{\tau+j}}\in S^{-\tau m^\prime-|\theta|,-\tau\mu^{\prime}-|\sigma|}(\R^d\times \R^d), 
		\quad \theta,\sigma\in\Z_+^d,j=1,\dots,|\theta+\sigma|.
	\]
\end{lem}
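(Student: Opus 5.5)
The proof goes by induction on $|\theta+\sigma|$, using the Leibniz and chain rules. I treat the general power $\tau\in\N$ from the start, since the case $\tau=1$ is the first formula of the statement.

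\emph{Base case.} For $|\theta+\sigma|=1$, say $\partial=\partial_{x_k}$ or $\partial_{\xi_k}$, we have
\[
\partial\, p^{-\tau}=-\tau\,(\partial p)\,p^{-\tau-1}.
\]
This is of the required form with $j=1$ and $P^{\tau,\theta\sigma}_1=-\tau\,\partial p$.

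\emph{Inductive step.} Assume the formula holds for $(\theta,\sigma)$, and apply one more first-order derivative $\partial$ to each summand $P^{\tau,\theta\sigma}_j\,p^{-\tau-j}$. This gives
\[
(\partial P^{\tau,\theta\sigma}_j)\,p^{-\tau-j}\;-\;(\tau+j)\,P^{\tau,\theta\sigma}_j\,(\partial p)\,p^{-\tau-j-1}.
\]
Consider a product $\partial^{\theta_1}_x\partial^{\sigma_1}_\xi p\cdots\partial^{\theta_j}_x\partial^{\sigma_j}_\xi p$.
\begin{itemize}
\item Differentiating it by Leibniz raises exactly one multi-index by the unit vector. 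The result is a combination of $j$-fold products with total multi-indices $(\theta+e,\sigma)$ or $(\theta,\sigma+e)$, as appropriate.
\item Multiplying it by $\partial p$ gives a $(j+1)$-fold product with the correct total multi-indices.
\end{itemize}
Collecting terms by the power of $p$, the index $j$ now runs from $1$ to $|\theta+\sigma|+1$. This closes the induction and proves the structure formula \eqref{eq:Pst}, together with its $\tau$-version.

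\emph{Symbol class of $P_j$.} Each factor satisfies $\partial^{\theta_k}_x\partial^{\sigma_k}_\xi p\in S^{m-|\theta_k|,\mu-|\sigma_k|}$. Since $S^{m_1,\mu_1}\cdot S^{m_2,\mu_2}\subset S^{m_1+m_2,\mu_1+\mu_2}$, a $j$-fold product lies in $S^{jm-|\theta|,j\mu-|\sigma|}$.

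\emph{Hypoelliptic refinement.} This is the only step needing care. Here I use the global version of the hypoelliptic bounds \eqref{eq:hypoell_a}--\eqref{eq:hypoell_b}, valid on all of $\R^{2d}$; the paper arranges this after modification by a compactly supported term, and in the application $p=b_s$ it is Remark~\ref{rem:bs_hypoelliptic}. By \eqref{eq:hypoell_b}, each quotient satisfies
\[
q_k:=\frac{\partial^{\theta_k}_x\partial^{\sigma_k}_\xi p}{p}\in S^{-|\theta_k|,-|\sigma_k|}.
\]
What \eqref{eq:hypoell_b} gives directly is only the pointwise bound. For the full symbol estimates I differentiate $q_k$ by Leibniz and use the identity $\partial(p^{-1})=-(\partial p/p)\,p^{-1}$. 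Inductively, every derivative $\partial^\alpha_x\partial^\beta_\xi(p^{-1})$ equals $p^{-1}$ times a combination of products of quotients $\partial^{\gamma}_x\partial^{\delta}_\xi p/p$. Each such product is bounded by $\x^{-|\alpha|}\csi^{-|\beta|}$ thanks to \eqref{eq:hypoell_b}.

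From this:
\begin{itemize}
\item $p^{-1}\in S^{-m',-\mu'}$, since $|p|\ge\Re p\ge C\x^{m'}\csi^{\mu'}$ globally.
\item $p^{-\tau}\in S^{-\tau m',-\tau\mu'}$, by the product rule for symbol classes.
\item The same argument shows $q_k\in S^{-|\theta_k|,-|\sigma_k|}$.
\end{itemize}
Then
\[
\frac{P^{\tau,\theta\sigma}_j}{p^{\tau+j}}=\Bigl(\prod_{k=1}^{j} q_k\Bigr)\cdot p^{-\tau}
\]
lies in $S^{-|\theta|,-|\sigma|}\cdot S^{-\tau m',-\tau\mu'}$, which is the claimed class. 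The case $\tau=1$ gives both stated memberships for $P^{\theta\sigma}_j/p^{j}$ and $P^{\theta\sigma}_j/p^{j+1}$.

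The main obstacle is this last step: turning the pointwise hypoelliptic bound \eqref{eq:hypoell_b} into full symbol estimates for the quotients $q_k$ and for $p^{-1}$. It is handled by the same induction used for the structure formula.
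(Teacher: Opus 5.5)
Your proposal is correct and follows the route the paper indicates: induction on the height $|\theta+\sigma|$ of the multi-indices, with details left to the reader. Your factorization $P^{\tau,\theta\sigma}_j/p^{\tau+j}=\bigl(\prod_{k} q_k\bigr)\,p^{-\tau}$, with the symbol estimates for $q_k$ and $p^{-1}$ obtained from the same induction, supplies the hypoelliptic part cleanly. You also need not appeal to a modified symbol: $p$ is smooth and nonvanishing, so $|p|$ is bounded below on the compact set $\{|x|+|\xi|\le R\}$. Hence the bounds $|p|\gtrsim \langle x\rangle^{m'}\langle\xi\rangle^{\mu'}$ and $|\partial^\alpha_x\partial^\beta_\xi p/p|\lesssim\langle x\rangle^{-|\alpha|}\langle\xi\rangle^{-|\beta|}$ hold on all of $\R^{2d}$, with $|p|$ in place of $\Re p$, directly from the lemma's hypotheses.
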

\noindent
\begin{proof}[Proof of Proposition \ref{prop:cs_parametrix}]
We refine the usual parametrix construction, making more explicit the structure of the employed asymptotic expansions in terms
of the powers of $b_s(x,\xi)=s^r+a(x,\xi)$, $s\in\C_\lambda$. We split the proof into various steps.

\noindent
i) Set
\[ 
	c_{0s}(x,\xi)=\frac{1}{b_s(x,\xi)}.
\]
From Remark \ref{rem:bs_hypoelliptic}, we immediately see that there exists $C>0$ independent of $s$ such that
\[ |c_{0s}(x,\xi)|\leq C^{-1}\<x\>^{-m^\prime}\<\xi\>^{-\mu^\prime},\]
for every $(x,\xi)\in \R^d\times \R^d$. 
By Lemma \ref{lem:deriv}, SG estimates hold true for all the derivatives
of $c_{0s}$, $s\in\C_\lambda$.
\noindent
ii) By the calculus, it holds
\[ 
	\Op(b_s)\circ \Op(c_{0s})=I+\Op(e_{0s}),
\]
modulo $S^{-\infty,-\infty}$, where, for $s\in\C_\lambda$, $e_{0s}\in S^{-1,-1}$ and, in view of Lemma
\ref{lem:deriv}, is given by 
\begin{equation}\label{eq:e0sae1st}
\begin{aligned}
	e_{0s}(x,\xi)&\sim \sum_{|\theta|>0} \frac{i^{|\theta|}}{\theta!} D_\xi^\theta b_s(x,\xi)D^\theta_x c_{0s}(x,\xi) 
	=\sum_{|\theta|>0} \frac{i^{|\theta|}}{\theta!} D_\xi^\theta a(x,\xi)\sum_{j=1}^{|\theta|}\frac{P^{\theta0}_j(x,\xi)}{[b_s(x,\xi)]^{j+1}}
	\\
	&=\sum_{|\theta|>0}\sum_{j=1}^{|\theta|}  \frac{\dfrac{i^{|\theta|}}{\theta!} \, D_\xi^\theta a(x,\xi)\, P^{\theta0}_j(x,\xi)}{[b_s(x,\xi)]^{j+1}}
	=\sum_{|\theta|>0}\sum_{j=1}^{|\theta|}  \frac{\wtP^{\theta\theta}_{j+1,0}(x,\xi)}{[b_s(x,\xi)]^{j+1}}.
\end{aligned}
\end{equation}
where the symbols $\wtP^{\theta\theta}_{j+1,0}\in S^{(j+1)m-|\theta|,(j+1)\mu-|\theta|}$, $j\ge1$, have the form \eqref{eq:Pst}. We can rewrite, equivalently, 
\begin{equation}\label{eq:e0sae2nd}
	e_{0s}(x,\xi)\sim \frac{1}{[b_s(x,\xi)]^2}\sum_{|\theta|>0}\wtP^{\theta\theta}_{2,0}(x,\xi) 
	+ \sum_{j\ge1}\frac{1}{[b_s(x,\xi)]^{j+2}}\sum_{|\theta|>j}\wtP^{\theta\theta}_{j+2,0}(x,\xi)=e_{0sp}(x,\xi)+e_{0sr}(x,\xi).
\end{equation}
Indeed, the terms in the asymptotic expansions \eqref{eq:e0sae1st} and \eqref{eq:e0sae2nd} are the same 
(taken in a different order), and, in view of Lemma \ref{lem:deriv}, 
the sums with respect to $|\theta|$ of the $\wtP^{\theta\theta}_{j+2,0}$, $j\in\N$, are themselves SG-asymptotic expansions, identifying 
symbols $Q_{0,j}\in S^{(j+2)m-j-1,(j+2)\mu-j-1}$, $j\in\N$, modulo $S^{-\infty,-\infty}$, independent from $s$. Again by 
Lemma \ref{lem:deriv}, it follows, for $s\in\C_\lambda$, $e_{0sp}\in S^{-1,-1}$ and $e_{0sr}\in S^{-2,-2}$, since 
\begin{equation}\label{eq:Qbsym}
	\frac{Q_{0,j}(x,\xi)}{[b_s(x,\xi)]^{j+2}}\in S^{-j-1,-j-1}, \quad j\in\N,
\end{equation}
so that the summation with respect to $j$ which defines $e_{0sr}$ is again a SG-asymptotic expansion. We sketch the proof of
\eqref{eq:Qbsym}. By the definition of $Q_{0,j}$, for any $j\ge0$, $N\ge 1$,
\[
	Q_{0,j}=\sum_{|\theta|=j+1}^{j+N} \wtP^{\theta\theta}_{j+2,0} + R^{N}_{j+2,0}, \quad
	R^N_{j+2,0}\in S^{(j+2)m-j-1-N,(j+2)\mu-j-1-N}.
\]
Then, choosing $N>\max\{1,(j+2)(m-m^\prime),(j+2)(\mu-\mu^\prime)\}$, for any $x,\xi\in\R^d$, recalling Remark \ref{rem:bs_hypoell},
\begin{align*}
	\left| \frac{Q_{0,j}(x,\xi)}{[b_s(x,\xi)]^{j+2}} \right| &\le \sum_{|\theta|=j+1}^{j+N} \left| \frac{\wtP^{\theta\theta}_{j+2,0}(x,\xi)}{[b_s(x,\xi)]^{j+2}} \right|
	 + \left| \frac{R^{N}_{j+2,0}(x,\xi)}{[b_s(x,\xi)]^{j+2}} \right|
	 \\
	 &\lesssim \sum_{|\theta|=j+1}^{j+N} \jap{x}^{-|\theta|}\jap{\xi}^{-|\theta|}+
	 \frac{\jap{x}^{(j+2)m-j-1-N}\jap{\xi}^{(j+2)\mu-j-1-N}}{\jap{x}^{(j+2)m^\prime}\jap{\xi}^{(j+2)\mu^\prime}}
	 \\
	 &\lesssim \jap{x}^{-j-1}\jap{\xi}^{-j-1}
\end{align*}
Similarly, for $\alpha,\beta\in\Z_+^d$ such that $|\alpha+\beta|=1$, choosing again 
$N>\max\{1,(j+2)(m-m^\prime),(j+2)(\mu-\mu^\prime)\}$, and recalling Remark \ref{rem:bs_hypoelliptic},
for any $x,\xi\in\R^d$, 
\begin{align*}
	\left| \partial^\alpha_x\partial^\beta_\xi\left\{\frac{Q_{0,j}(x,\xi)}{[b_s(x,\xi)]^{j+2}}\right\} \right| 
	&\le \sum_{|\theta|=j+1}^{j+N} \left| \partial^\alpha_x\partial^\beta_\xi\left\{\frac{\wtP^{\theta\theta}_{j+2,0}(x,\xi)}{[b_s(x,\xi)]^{j+2}} \right\}\right|
	 + \left| \partial^\alpha_x\partial^\beta_\xi\left\{\frac{R^{N}_{j+2,0}(x,\xi)}{[b_s(x,\xi)]^{j+2}} \right\}\right|
	 \\
	 &\lesssim \sum_{|\theta|=j+1}^{j+N} \jap{x}^{-|\theta|-|\alpha|}\jap{\xi}^{-|\theta|-|\beta|}+
	  \left| \frac{\partial^\alpha_x\partial^\beta_\xi R^{N}_{j+2,0}(x,\xi)}{[b_s(x,\xi)]^{j+2}} \right|+
	  \left| \frac{\partial^\alpha_x\partial^\beta_\xi a(x,\xi)}{b_s(x,\xi)} \cdot \frac{R^{N}_{j+2,0}(x,\xi)}{[b_s(x,\xi)]^{j+2}} \right|
	 \\
	 &\lesssim  \jap{x}^{-j-1-|\alpha|}\jap{\xi}^{-j-1-|\beta|}+
	 \jap{x}^{(j+2)(m-m^\prime)-j-1-|\alpha|-N}\jap{\xi}^{(j+2)(\mu-\mu^\prime)-j-1-|\beta|-N}
	 \\
	 &+\jap{x}^{-|\alpha|}\jap{\xi}^{-|\beta|}\jap{x}^{(j+2)(m-m^\prime)-j-1-N}\jap{\xi}^{(j+2)(\mu-\mu^\prime)-j-1-N}
	 \\
	 &\lesssim \jap{x}^{-j-1-|\alpha|}\jap{\xi}^{-j-1-|\beta|}.
\end{align*}
The estimates for general $\alpha,\beta\in\Z_+^d$ follow by an induction argument, 
again employing Lemma \ref{lem:deriv} and the hypoellipticity hypothesis.
\\

\noindent
iii) Set
\[
	c_{1s}(x,\xi)=-\frac{Q_{0,0}(x,\xi)}{[b_s(x,\xi)]^3}.
\]
By an argument completely similar to the one sketched at the end of the previous step, we see that, for $s\in\C_\lambda$,
$c_{1s}\in S^{-m^\prime-1,-\mu^\prime-1}$.
The same computations also show that the seminorm $||| c_{1s}|||^{-m^\prime-1,-\mu^\prime-1}_k$ is uniformly bounded with respect to $s\in \C_\lambda$, 
for any $k\in \N$. Moreover, by the calculus,
\[
	\Op(b_s)\circ \Op(c_{0s}+c_{1s})=I+\Op(e_{0sp})+\Op(e_{0sr})-\Op(e_{0sp})+\Op(\wte_{0sr})=I+\Op(e_{1s}),
\]
with $e_{1s} = e_{0sr}+\wte_{0sr}\in S^{-2,-2}$, given by
\begin{equation}\label{eq:e1s}
	e_{1s}(x,\xi)\sim \sum_{j\in\N}\frac{Q_{1,j}(x,\xi)}{[b_s(x,\xi)]^{j+3}},
\end{equation}
where the symbols $Q_{1,j}\in S^{(j+3)m-j-2,(j+3)\mu-j-2}$ are asymptotic sums of (derivatives of) 
polynomials $\wtP^{\theta\theta}_{j+3,1}$ of the form \eqref{eq:Pst}.
In fact,  by the second part of Lemma \ref{lem:deriv} and Leibniz formula, we find, for $s\in\C_\lambda$,
\begin{equation}\label{eq:wte0sae}
\begin{aligned}
	\wte_{0sr}(x,\xi)&\sim \phantom{-}\sum_{|\theta|>0} \frac{i^{|\theta|}}{\theta!}D^{\theta}_\xi b_s(x,\xi) D^\theta_x c_{1s}(x,\xi)
	\\
	&=-\sum_{|\theta|>0} \frac{i^{|\theta|}}{\theta!} D^\theta_\xi a(x,\xi)
	\left\{
		\sum_{\substack{\gamma\le\theta \\ \gamma\not=\theta}} \begin{pmatrix} \theta \\ \gamma \end{pmatrix}
		D^\gamma_x Q_{0,0}(x,\xi)\sum_{j=1}^{|\theta-\gamma|}\frac{P^{(\theta-\gamma)0}_j(x,\xi)}{[b_s(x,\xi)]^{j+3}}
		+ 
		\frac{D^\theta_x Q_{0,0}(x,\xi)}{[b_s(x,\xi)]^3}
	\right\}.
\end{aligned}
\end{equation}
We can rewrite, equivalently,
\begin{equation}\label{eq:wte0sr}
\begin{aligned}
	\wte_{0sr}(x,\xi) &\sim \frac{1}{[b_s(x,\xi)]^3}\sum_{|\theta|>0} \frac{-i^{|\theta|}}{\theta!} \, D_\xi^\theta a(x,\xi)D^\theta_x Q_{0,0}(x,\xi) 
	\\
	&+ \sum_{j\ge1}\frac{1}{[b_s(x,\xi)]^{j+3}}\sum_{|\theta|>j}\sum_{|\gamma|<|\theta|}C_{\theta\gamma} 
	D^\theta_\xi a(x,\xi) D^\gamma_x Q_{0,0}(x,\xi) P^{\theta-\gamma,0}_{j}(x,\xi).
\end{aligned}
\end{equation}
As in the previous step ii), we observe that the sums with respect to $|\theta|$ in \eqref{eq:wte0sr} are SG-asymptotic expansions in polynomials
$\wtP^{\wtth\wtth}_{j+3}$, $|\wtth|=|\theta|+1$, of the form \eqref{eq:Pst}, identifying symbols $\wtQ_{1,j}\in S^{(j+3)m-j-2,(j+3)\mu-j-2}$, $j\in\N$,
and giving $\wte_{0sr}\in S^{-2,-2}$, as claimed. Since, by the previous step,
\[
	e_{0sr}(x,\xi)\sim \sum_{j\in\N}\frac{Q_{0,j+1}(x,\xi)}{[b_s(x,\xi)]^{j+3}}, \quad Q_{0,j+1}\in S^{(j+3)m-j-2,(j+3)\mu-j-2}, j\in\N,
\]
we obtain \eqref{eq:e1s} setting $Q_{1,j}=\wtQ_{1,j}+Q_{0,j+1}$, $j\in\N$. Of course, by the previous step, also 
the symbols $Q_{1,j}$ admits expansions in polynomials $\wtP^{\wtth\wtth}_{j+3}$ of the form \eqref{eq:Pst}, for any $j\in\N$.\\

\noindent
iv) Set 
\[
	c_{2s}(x,\xi)=-\frac{Q_{1,0}(x,\xi)}{[b_s(x,\xi)]^4}.
\]
As in step iii), by the properties of $Q_{1,0}$ and hypoellipticity, it follows that $c_{2s}\in S^{-m^\prime-2,-\mu^\prime-2}$, with
seminorms $|||c_{2s}|||^{-m^\prime-2,-\mu^\prime-2}_k$, $k\in\N$, uniformly bounded with respect to $s\in\C_\lambda$. Writing
\[
	e_{1s}(x,\xi)\sim \frac{Q_{1,0}(x,\xi)}{[b_s(x,\xi)]^3}+\sum_{j\ge1}\frac{Q_{1,j}(x,\xi)}{[b_s(x,\xi)]^{j+3}}=e_{1sp}(x,\xi)+e_{1sr}(x,\xi),
\]
we find, similarly to step iii), $e_{1sp}\in S^{-2,-2}$ and $e_{1sr}\in S^{-3,-3}$. By the calculus,
\[
	\Op(b_s)\circ\Op(c_{0s}+c_{1s}+c_{2s})=I + \Op(e_{1sp}) + \Op(e_{1sr}) - \Op(e_{1sp}) + \Op(\wte_{1sr}) = I + \Op(e_{2s}),
\]
with $e_{2s} = e_{1sr}+\wte_{1sr}\in S^{-3,-3}$, given by
\begin{equation}\label{eq:e2s}
	e_{2s}(x,\xi)\sim \sum_{j\in\N}\frac{Q_{2,j}(x,\xi)}{[b_s(x,\xi)]^{j+4}},
\end{equation}
where the symbols $Q_{2,j}\in S^{(j+4)m-j-3,(j+4)\mu-j-3}$ are asymptotic sums of (derivatives of) 
polynomials $\wtP^{\theta\theta}_{j+4,1}$ of the form \eqref{eq:Pst}.
In fact, as in the previous step iii), we find
\begin{equation}\label{eq:wte1sae}
\begin{aligned}
	\wte_{1sr}(x,\xi)&\sim \phantom{-}\sum_{|\theta|>0} \frac{i^{|\theta|}}{\theta!}D^{\theta}_\xi b_s(x,\xi) D^\theta_x c_{2s}(x,\xi)
	\\
	&=-\sum_{|\theta|>0} \frac{i^{|\theta|}}{\theta!} D^\theta_\xi a(x,\xi)
	\left\{
		\sum_{\substack{\gamma\le\theta \\ \gamma\not=\theta}} \begin{pmatrix} \theta \\ \gamma \end{pmatrix}
		D^\gamma_x Q_{1,0}(x,\xi)\sum_{j=1}^{|\theta-\gamma|}\frac{P^{(\theta-\gamma)0}_j(x,\xi)}{[b_s(x,\xi)]^{j+4}}
		+ 
		\frac{D^\theta_x Q_{1,0}(x,\xi)}{[b_s(x,\xi)]^4}
	\right\}.
\end{aligned}
\end{equation}
We can rewrite, equivalently,
\begin{equation}\label{eq:wte1sr}
\begin{aligned}
	\wte_{1sr}(x,\xi) &\sim \frac{1}{[b_s(x,\xi)]^4}\sum_{|\theta|>0} \frac{-i^{|\theta|}}{\theta!} \, D_\xi^\theta a(x,\xi)D^\theta_x Q_{1,0}(x,\xi) 
	\\
	&+ \sum_{j\ge1}\frac{1}{[b_s(x,\xi)]^{j+4}}\sum_{|\theta|>j}\sum_{|\gamma|<|\theta|}C_{\theta\gamma} 
	D^\theta_\xi a(x,\xi) D^\gamma_x Q_{1,0}(x,\xi) P^{\theta-\gamma,0}_{j}(x,\xi).
\end{aligned}
\end{equation}
As above, we observe that the sums with respect to $|\theta|$ in \eqref{eq:wte1sr} are SG-asymptotic expansions in polynomials
$\wtP^{\wtth\wtth}_{j+4}$, $|\wtth|=|\theta|+2$,
of the form \eqref{eq:Pst}, identifying symbols $\wtQ_{2,j}\in S^{(j+4)m-j-3,(j+4)\mu-j-3}$, $j\in\N$, 
and giving $\wte_{1sr}\in S^{-3,-3}$, as claimed. Since, by the previous step,
\[
	e_{1sr}(x,\xi)\sim \sum_{j\in\N}\frac{Q_{1,j+1}(x,\xi)}{[b_s(x,\xi)]^{j+4}}, \quad Q_{1,j+1}\in S^{(j+4)m-j-3,(j+4)\mu-j-3}, j\in\N,
\]
we obtain \eqref{eq:e2s} setting $Q_{2,j}=\wtQ_{2,j}+Q_{1,j+1}$, $j\in\N$. Of course, by the previous step, also 
the symbols $Q_{2,j}$ admits expansions in polynomials $\wtP^{\wtth\wtth}_{j+4}$ of the form \eqref{eq:Pst}, for any $j\in\N$.\\

\noindent
v) Iterating step iv), we obtain remainders families 
\[
	e_{ks}\sim\sum_{j\in\N}\frac{Q_{k,j}}{b_s^{j+k+2}}, \quad
	Q_{k,j}\in S^{(j+k+2)m-j-k-1,(j+k+2)\mu-j-k-1}, j\in\N, k\ge 3,
\]
where the symbols $Q_{k,j}$ are independent of $s$ and obtained as asymptotic sums of derivatives of polynomials of the form \eqref{eq:Pst},
with monomials of degree $j+k+2$, and symbol families $c_{js}\in S^{-m^\prime-j,-\mu^\prime-j}$, $j\in\N$, such that:
\begin{itemize}
	\item[-] $c_{0s}=\dfrac{1}{b_s}$, $c_{js}=\dfrac{Q_{j-1}}{b_s^{j+2}}$, $Q_{j-1}=-Q_{j-1,0}\in S^{(j+1)m-j,(j+1)\mu-j}$, $j\ge1$;
	\item[-] $||| c_{js}|||^{-m^\prime-j,-\mu^\prime-j}_k$ is uniformly bounded with respect to $s\in \C_\lambda$, for any $j,k\in \N$;
	\item[-] for any $N\in\N$, $\Op(b_s)\Op(c_{0s}+c_{1s}+\cdots+c_{Ns})=I+\Op(e_{Ns})$, $e_{Ns}\in S^{-N-1,-N-1}$.
\end{itemize}
For any $s\in \C_\lambda$ we then consider the asymptotic sum $\displaystyle c_s=\sum_j c_{js}$,
providing a right parametrix to $\Op(b_s)$, namely,
\[
	\Op(b_s)\Op(c_s)=I+\Op(r_{1s}),
\]
for some $r_{1s}\in S^{-\infty,-\infty}$. By construction, it follows that for any $k\in \N$ there exists $C_k>0$ such that
$|||c_s|||^{-m^\prime,-\mu^\prime}_k\leq C_k$, uniformly with respect to $s\in \C_\lambda$. The construction of a left parametrix,
with analogous properties, follows by a completely similar argument. The proof is complete.
\end{proof}
\begin{cor}\label{cor:rs}
	The symbols $r_{1s}$ and $r_{2s}$ of the remainders in \eqref{eq:cs_par} satisfy 
	\begin{align*}
		\forall M\in\N\;\forall\alpha,\beta\in\N^d\; \exists C_{\alpha\beta}>0 \; \forall x,\xi\in\R^d\;\forall s\in\C_\lambda\;\;
		&
		|\partial^\alpha_\xi\partial^\beta_x r_{1s}(x,\xi)|\le |s|^{-rM}\csi^{-M-|\alpha|}\x^{-M-|\beta|}
		\\
		\text{ and }\;
		&|\partial^\alpha_\xi\partial^\beta_x r_{2s}(x,\xi)|\le |s|^{-rM}\csi^{-M-|\alpha|}\x^{-M-|\beta|},
	\end{align*}
	with $\lambda>0$ sufficiently large.
	We write $r_{1s},r_{2s} \in |s|^{-\infty}S^{-\infty,-\infty}(\R^d\times\R^d)$. It follows that the corresponding kernels $k_{1}(s,x,y)$ and $k_2(s,x,y)$
	of $\Op(r_{1s})$ and $\Op(r_{2s})$, respectively, satisfy analogous estimates in $(s,x,y)\in\C_\lambda\times\R^d\times\R^d$, and are analytic
	functions on $\C_\lambda$ taking values in $\scS(\R^d\times\R^d)$, with $\lambda>0$ sufficiently large.
\end{cor}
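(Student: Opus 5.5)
The key observation is that every term of the parametrix construction carries inverse powers of $b_s=s^r+a$. Since $\Re s>\lambda$ and $r\in(0,1)$, we have a lower bound on $|b_s|$ in terms of both $|s|^r$ and $\x^{m'}\csi^{\mu'}$. By Remark \ref{rem:bs_hypoelliptic}, $\Re b_s=|s|^r\cos(r\theta)+a\ge \cos(r\pi/2)|s|^r$ and also $\Re b_s\ge \widetilde C\x^{m'}\csi^{\mu'}$. Averaging the two bounds, for any $\varepsilon\in[0,1]$,
\[
|b_s(x,\xi)|\gtrsim |s|^{r\varepsilon}\x^{(1-\varepsilon)m'}\csi^{(1-\varepsilon)\mu'} .
\]
So $b_s^{-1}$ can be traded for a factor $|s|^{-r}$ at the cost of losing the decay $\x^{-m'}\csi^{-\mu'}$. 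Equivalently, $|s|^{r}b_s^{-1}$ is uniformly bounded in $S^{0,0}$ (with derivatives controlled through \eqref{eq:bsders}). The plan is to exploit this to rerun the proof of Proposition \ref{prop:cs_parametrix} with an explicit dependence on $|s|$.

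\textbf{Step 1 (uniform bounds with $|s|$-decay on the terms).} Combining Lemma \ref{lem:deriv} with the bound above, for any $\tau\in\N$ we get $\partial^\alpha_x\partial^\beta_\xi\, b_s^{-\tau}=O(|s|^{-r\tau}\x^{-|\alpha|}\csi^{-|\beta|})$, uniformly in $s\in\C_\lambda$. I then show that each remainder $e_{Ns}$ of step v), and also the difference $c_s-\sum_{j\le N}c_{js}$, lies in $|s|^{-r}S^{-N-1+c,-N-1+c}$ with uniformly bounded seminorms. Here $c$ is a loss depending only on $m-m',\mu-\mu'$. This uses $e_{Ns}\sim\sum_j Q_{N,j}b_s^{-(j+N+2)}$: I spend one power of $b_s$ on $|s|^{-r}$ and the remaining ones on the SG-decay.

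\textbf{Step 2 (making the asymptotic sum compatible with $|s|$).} This is the main obstacle. The asymptotic sum $c_s\sim\sum_j c_{js}$ and the residual $r_{1s}=e_{Ns}$-type terms must be built so that the cutoffs are $s$-independent and the tails keep the $|s|^{-r}$ factor. I would use the standard Borel-type construction $c_s=\sum_j\big(1-\chi(\epsilon_j x)\chi(\epsilon_j\xi)\big)c_{js}$ with $\epsilon_j$ independent of $s$. This is possible because the seminorm bounds of $c_{js}$, even after multiplication by $|s|^{r}$, are uniform in $s$. Then $r_{1s}=\Op(b_s)\Op(c_s)-I$ equals $e_{Ns}$ plus the composition remainder of order $-N-1$, for every $N$. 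To get arbitrary powers $|s|^{-rM}$ rather than just $|s|^{-r}$, I would iterate: on a neighbourhood of every point the expansion can be made with any number of $b_s^{-1}$ factors. Alternatively, I would interpolate the estimate $r_{1s}\in|s|^{-r}S^{-N,-N}$ (valid for all $N$) with itself via a Neumann-type trick, writing $I+\Op(r_{1s})$ and improving the parametrix by $c_s\#(I-r_{1s}+\dots)$. The cleaner route is the direct count: each term $Q_{N,j}b_s^{-(j+N+2)}$ has at least $N+2$ powers of $b_s^{-1}$. Given $M$, I take $N\ge 2M+c$ and use $M$ powers for $|s|^{-rM}$, since the degree of $Q_{N,j}$ grows like $(j+N+2)m-(j+N+1)$. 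The point to check carefully is that the remaining $\x^{-1}\csi^{-1}$ gain per step survives after giving away $M$ factors of $\x^{m'}\csi^{\mu'}$. This requires the refined counting $Q_{N,j}\in S^{(j+N+2)m-j-N-1,\dots}$ and $N$ large relative to $M(m+\mu)$. The left remainder $r_{2s}$ is handled symmetrically.

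\textbf{Step 3 (kernels and analyticity).} The kernel is $k_1(s,x,y)=(2\pi)^{-d}\int e^{\ii(x-y)\xi}r_{1s}(x,\xi)\,d\xi$. Integrating by parts in $\xi$ with $\langle x-y\rangle^{-2K}(1-\Delta_\xi)^K$ transfers the symbol estimates into $|s|^{-rM}\x^{-M}\jap{y}^{-M}\langle x-y\rangle^{-K}$-type bounds for all derivatives, which gives the analogous estimates. For analyticity, each $c_{js}$ and $e_{Ns}$ is holomorphic in $s$ with values in the symbol classes, since $s\mapsto s^r$ is holomorphic on $\C_\lambda$ and $b_s\neq0$. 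The Borel cutoffs do not depend on $s$, and locally uniform convergence then gives holomorphy of $s\mapsto r_{1s}\in\scS(\R^{2d})$, hence of $k_1$ by Proposition \ref{thm:smoothing}.
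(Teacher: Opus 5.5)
Your proposal is correct and, once you settle on the \emph{direct count}, it follows the paper's proof. The paper likewise first shows $s^{rM}b_s^{-M}\in S^{0,0}$ uniformly on $\C_\lambda$, then writes $s^{rM}Q_{N,j}b_s^{-(j+N+2)}=(s^{rM}b_s^{-M})\cdot Q_{N,j}b_s^{-(j+N+2-M)}$ with $N=\max\{2mM,2\mu M\}+2M$ to get $e_{Ns}\in|s|^{-rM}S^{-M,-M}$, and treats the passage to $r_{1s}$, the kernels and analyticity as immediate. Your $s$-independent Borel summation is a useful addition, but the cut-offs must be chosen diagonally so that $|s|^{rM}c_{js}$ is controlled for every $M$, not only $|s|^{r}c_{js}$, because $b_s\#\bigl(c_s-\sum_{j\le N}c_{js}\bigr)$ contains the term $s^r\bigl(c_s-\sum_{j\le N}c_{js}\bigr)$.
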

\begin{proof}
	By the proof of Proposition \ref{prop:cs_parametrix}, we already know that $r_{1s},r_{2s}\in S^{-\infty,-\infty}$, uniformly with respect to $s\in\C_\lambda$,
	$\lambda>0$ sufficiently large.
	It is then enough to check the fast decay property with respect to $|s|^r$ in the same complex domain. To this aim, we prove that
	\begin{enumerate}
		\item[ i)] for any $M\in\N$ it holds $\dfrac{s^{rM}}{b_s^M}\in S^{0,0}$, uniformly with respect to $s\in\C_\lambda$, $\lambda>0$ sufficiently large;
		\item[ii)] for any $M\in\N$ there exists $N\in\N$ such that $e_{Ns}\in |s|^{-rM}S^{-M,-M}$, uniformly with respect to $s\in\C_\lambda$, 
		$\lambda>0$ sufficiently large, with $e_{Ns}$ from point v) of the proof
		of Proposition \ref{prop:cs_parametrix}.
	\end{enumerate}
	
	Point i) follows immediately, observing that, for any $M\in\N$, $s\in\C_\lambda$, $x,\xi\in\R^d$, $\alpha,\beta\in\N^d$, $|\alpha+\beta|=1$,
	\begin{align*}
		&\left|\frac{s^{rM}}{[s^r+a(x,\xi)]^M}\right|\le\frac{|s|^{rM}}{|s^r|^M}
		=1,
		\\
		&\left|\partial^\alpha_\xi\partial^\beta_x\frac{s^{rM}}{[s^r+a(x,\xi)]^M}\right|
		=M\frac{|s|^{rM}}{|s^r+a(x,\xi)|^{M+1}}\cdot|\partial^\alpha_\xi\partial^\beta_x a(x,\xi)|
		\le M\frac{|\partial^\alpha_\xi\partial^\beta_x a(x,\xi)|}{a(x,\xi)}
		\lesssim \csi^{-|\alpha|}\x^{-|\beta|},
	\end{align*}
	and that the general estimates for arbitrary $\alpha,\beta\in\N^d$ can be obtained by induction, with constants not depending on $s\in\C_\lambda$.
	
	To prove point ii), set $N=\max\{2mM,2\mu M\}+2M$, so that
	\[
		s^{rM} e_{Ns} \sim \sum_{j\in\N}s^{rM}\frac{Q_{N,j}}{b_s^{j+N+2}} 
		= \sum_{j\in\N}\underbrace{\frac{s^{rM}}{b_s^M}}_{\in S^{0,0}}\cdot \underbrace{\frac{Q_{N,j}}{b_s^{j+N-M+2}}}_{\in S^{z,\zeta}},
	\]
	uniformly with respect to $s\in\C_\lambda$, where, by the choice of $N$, 
	\begin{align*}
		z &= (j+N+2)m-j-N-1-(j+N-M+2)m=Mm-j-N-1<-M-j,
		\\
		\zeta &= (j+N+2)\mu-j-N-1-(j+N-M+2)\mu=M\mu-j-N-1<-M-j,
	\end{align*}
	which implies $s^{rM}e_{sN}\in S^{-M,-M}\Rightarrow e_{sN}\in |s|^{-rM}S^{-M,-M}$, $s\in\C_\lambda$, $\lambda>0$ sufficiently large.
	The remaining claims are immediate, in view of the properties of the kernels of smoothing operators in the SG-calculus and of the sums
	of asymptotic expansions.
\end{proof}

\section{Laplace transform of functions and distributions}\label{subs:LTandpsidos}
\setcounter{equation}{0}
Here we recall the basic definitions and properties of the (vector-valued) Laplace transform on functions and distributions, 
to fully justify our approach and the functional setting where we looked for the solutions of the Cauchy problems we studied in \cite{CGP1}. For the sake of completeness,
we also prove some properties that we tacitly employed in some of the proofs in \cite{CGP1}, concerning the commutation properties of 
the (inverse) Laplace transform with respect to the $t$-variable ($s$-variable) and the action of pseudodifferential operators on (families of) temperate 
distributions in the $x$-variable.

\medskip

The next definitions and results are well-known. Here we mainly follow \cite{Gilardi} (cf. also \cite{Doetsch,AMP}), and describe the results
for $\C$-valued functions and corresponding distributions. The extension to vector-valued functions and distributions is rather straightforward, 
and we only comment shortly about this in Remark \ref{rem:LTvector}. 

\begin{defn}\label{def:LTfun}
	A function $u\in L^1_{\loc}(\R)$ is called ($\cL$-)transformable, and we write $u\in \funLTR$, if
	\begin{enumerate}[label={($\cL\mathrm{T}_\arabic*$)},ref={($\cL\mathrm{T}_\arabic*$)}]
		\item\label{LT:supp} $\supp u\subseteq[0,+\infty)$ and
		\item\label{LT:expfun} there exists $\lambda\in\R$ such that $t\mapsto e^{-\lambda t}u(t)\in L^1(\R)$.
	\end{enumerate}
	The number $\absa(u)=\inf\{\lambda\in\R\colon \text{condition \ref{LT:expfun} holds true}\}$ is called \textit{(absolute) abscissa of convergence}
	of the Laplace integral of $u$. $\funLTR$ is a vector space.
	
	The Laplace transform of $u\in\funLTR$ is the function defined by
	\begin{equation}\label{eq:LTufun}
		(\cL u)(s)=\int_{-\infty}^{+\infty} e^{-st} u(t)\, dt,
	\end{equation}
	for any $s\in\C$ such that $\Re s>\absa(u)$. In the sequel we will often employ the notation $e_\tau(t)=\exp(-\tau t)$.
\end{defn}
\begin{rem}\label{rem:LTfun}
	We immediately see that, if $u\in \funLTR$ with $\lambda\in\R$ satisfying \ref{LT:expfun}, $\Re s > \lambda$ implies
	$e_s u\in L^1(\R)$. This is of course the case if $\Re s>\absa(u)$. In such situation, $(\cL u)(s)=\widehat{e_{\Re s} u}(\Im s)$.
	Notice that it can happen that $\absa(u)=-\infty$, so that the condition $\Re s>\absa(u)$ is void. 
	The set $\{s\in\C\colon\Re s>\absa(u)\}=\C_{\absa(u)}$ is anyway usually called \textit{half-plane of absolute convergence} 
	(even in the exceptional case when it actually is the full complex plane).
\end{rem}
\begin{prop}\label{prop:LTfunprop}
	Let $u\in\funLTR$. Then it holds:
	\begin{enumerate}
		\item $\cL u$ is bounded on the half-plane $\overline{\C_\lambda}$ for any $\lambda>\absa(u)$;
		\item $\displaystyle\lim_{\Re s\to+\infty}(\cL u)(s) = 0\Leftrightarrow \forall (s_n)\text{ such that }
		\lim_{n\to+\infty}\Re s_n=+\infty \colon \lim_{n\to+\infty}(\cL u)(s_n)=0$.
	\end{enumerate}
\end{prop}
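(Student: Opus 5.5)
For (1) I would simply estimate the Laplace integral \eqref{eq:LTufun} pointwise. Fix $\lambda>\absa(u)$. By the definition of $\absa(u)$ as an infimum, there is $\lambda_0$ with $\absa(u)\le\lambda_0<\lambda$ such that condition \ref{LT:expfun} holds with $\lambda_0$. Then, for $s\in\overline{\C_\lambda}$, using $\supp u\subseteq[0,+\infty)$ and $|e^{-st}|=e^{-t\Re s}\le e^{-\lambda_0 t}$ for $t\ge0$, we get
\[
	|(\cL u)(s)|\le\int_0^{+\infty}e^{-t\Re s}|u(t)|\,dt\le\int_0^{+\infty}e^{-\lambda_0 t}|u(t)|\,dt=\|e_{\lambda_0}u\|_{L^1(\R)}<\infty .
\]
This gives a bound uniform in $s\in\overline{\C_\lambda}$. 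I use $\lambda_0$ rather than $\lambda$ itself only to guarantee that the integrability hypothesis holds; in fact $e_{\lambda}u\in L^1$ as well, by the same monotonicity.

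For (2) I would actually prove that the limit holds: $\lim_{\Re s\to+\infty}(\cL u)(s)=0$. The stated equivalence between the limit and its sequential form is then the standard sequential characterization of limits along the filter $\Re s\to+\infty$, and I would record it as immediate.

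To prove the limit, take any sequence $(s_n)$ with $\Re s_n\to+\infty$, and assume without loss of generality that $\Re s_n\ge\lambda_0$ for all $n$. Set $f_n(t)=e^{-s_nt}u(t)$. Then $f_n(t)\to0$ for every $t>0$, and $\{0\}$ is a null set. Moreover $|f_n|\le e^{-\lambda_0 t}|u(t)|\in L^1$. Dominated convergence then gives $(\cL u)(s_n)\to0$, and since the sequence was arbitrary, the limit follows.

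The only delicate point I expect is the domination near $t=0$. Here the support condition \ref{LT:supp} is essential: it is what makes $e^{-t\Re s}\le e^{-t\lambda_0}$ valid on the whole support of $u$. Without it, the inequality reverses for $t<0$ and the dominating function would fail.
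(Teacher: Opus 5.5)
Your proof is correct and complete. The paper gives no proof of this proposition: it lists it among the well-known facts taken from \cite{Gilardi} (cf.\ also \cite{Doetsch}), so there is no argument to compare with. Your argument is the standard one: the uniform bound $|(\cL u)(s)|\le\|e_{\lambda_0}u\|_{L^1(\R)}$ on $\overline{\C_\lambda}$ for (1), and dominated convergence with dominating function $e_{\lambda_0}|u|$ for (2).

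Two minor remarks. First, the support condition \ref{LT:supp} is indispensable on the half-line $t<0$, where $e^{-t\Re s}$ blows up as $\Re s\to+\infty$. It is not a question of a neighbourhood of $t=0$; your final sentence says this correctly, and only the phrase ``near $t=0$'' is misleading. Second, you can get the uniformity in $\Im s$ implicit in $\lim_{\Re s\to+\infty}$ directly, without the sequential characterization. Indeed $|(\cL u)(s)|\le\Phi(\Re s)$ with $\Phi(\sigma)=\int_0^{+\infty}e^{-\sigma t}|u(t)|\,dt$, a nonincreasing function of the real variable $\sigma$ alone that tends to $0$ as $\sigma\to+\infty$ by the same dominated convergence argument.
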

\begin{thm}\label{thm:LTfunthma}
	Let $u\in\funLTR$. Then:
	\begin{enumerate}
		\item $\cL u$ is holomorphic in $\C_{\absa(u)}$;
		\item for any $k\in \N$ the function $v_k(t)= t^ku(t)$ belongs to $\funLTR$ and its abscissa of absolute convergence is $\absa(u)$;
		\item for any $k\in \N$ it holds $\displaystyle\frac{d^k}{ds^k}(\cL u)(s)=(-1)^k(\cL v_k)(s)$, $\Re s>\absa(u)$.
	\end{enumerate}
\end{thm}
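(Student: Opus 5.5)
The idea is to reduce everything to differentiation under the integral sign in \eqref{eq:LTufun}, using domination by $e^{-\lambda t}|u(t)|$ for suitable $\lambda$. First we would settle item (2), since (1) and (3) rest on it.

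For item (2), fix $k\in\N$ and $\lambda>\absa(u)$. Choose $\lambda'$ with $\absa(u)<\lambda'<\lambda$; then $e_{\lambda'}u\in L^1(\R)$. Because $\supp u\subseteq[0,+\infty)$, we have $|e^{-\lambda t}t^k u(t)|\le \sup_{t\ge0}\big(t^k e^{-(\lambda-\lambda')t}\big)\,e^{-\lambda' t}|u(t)|$. The supremum is finite, equal to $(k/(\lambda-\lambda'))^k e^{-k}$. Hence $e_\lambda v_k\in L^1(\R)$, and $\supp v_k\subseteq[0,+\infty)$. This gives $v_k\in\funLTR$ with $\absa(v_k)\le\absa(u)$. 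For the reverse inequality, suppose $e_\lambda v_k\in L^1$. On $[1,+\infty)$ we have $|u|\le t^k|u|$. On $[0,1]$ we have $e^{-\lambda t}|u|\le e^{|\lambda|}|u|$, which is integrable because $u\in L^1_{\loc}$. So $e_\lambda u\in L^1$, whence $\absa(u)\le\absa(v_k)$.

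For items (1) and (3), fix $s_0$ with $\Re s_0>\absa(u)$ and pick $\lambda$ with $\absa(u)<\lambda<\Re s_0$. On the half-plane $\Re s>\lambda$ the integrand $e^{-st}u(t)$ is holomorphic in $s$ for almost every $t$. Its $s$-derivative $-te^{-st}u(t)$ is dominated by $t e^{-\lambda t}|u(t)|$, which is integrable by item (2) with $k=1$. By the standard theorem on differentiating parameter integrals (or by Morera's theorem combined with Fubini), $\cL u$ is complex differentiable at $s_0$ and $(\cL u)'(s)=-(\cL v_1)(s)$. This proves (1). Item (3) then follows by induction on $k$. We apply the $k=1$ case to $v_k\in\funLTR$, which is allowed since $\absa(v_k)=\absa(u)$. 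Using $t\,v_k=v_{k+1}$, this gives $\frac{d^{k+1}}{ds^{k+1}}\cL u=(-1)^k\frac{d}{ds}\cL v_k=(-1)^{k+1}\cL v_{k+1}$.

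The only delicate point is the domination step. We need a single integrable majorant that is uniform in a complex neighbourhood of $s_0$. This is why we shrink to $\lambda<\Re s_0$ and use the weight $t^k e^{-(\lambda-\lambda')t}$ from item (2). Strictly, the reverse inequality $\absa(v_k)\ge\absa(u)$ also relies on $u\in L^1_{\loc}$ near $t=0$. Everything else is routine.
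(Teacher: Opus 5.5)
Your proof is correct. The paper itself gives no proof of this theorem: it recalls it as a well-known fact from the Laplace-transform texts it follows (Gilardi, Doetsch). Your argument is the standard one found there: the bound $t^k e^{-(\lambda-\lambda')t}\le (k/(\lambda-\lambda'))^k e^{-k}$, the splitting at $t=1$ for the reverse inequality, differentiation under the integral sign with the majorant $t\,e^{-\lambda t}|u(t)|$ on the convex half-plane $\Re s>\lambda$, and induction via $t\,v_k=v_{k+1}$.

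The only step you use tacitly is that $e_{\lambda'}u\in L^1(\R)$ for every $\lambda'>\absa(u)$, not merely for some. This follows from $\supp u\subseteq[0,+\infty)$, as noted in Remark \ref{rem:LTfun}.
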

\begin{thm}\label{thm:LTfunthmb}
	Let $u,v\in\funLTR$. Then:
	\begin{enumerate}
		\item the following functions are in $\funLTR$, too, and the corresponding formulae hold true:
			\begin{enumerate}
				\item $\displaystyle w(t)=u(ct)=(M_c^*u)(t), c\not=0 \Rightarrow (\cL w)(s)=\frac{1}{c}(\cL u)\left(\frac{s}{c}\right)$, $\Re s> c\absa(u)$;
				\item $w(t)=u(t-t_0)=(\tau_{t_0}u)(t), t_0>0 \Rightarrow (\cL w)(s)=e^{-t_0 s} (\cL u)(s)$, $\Re s > \absa(u)$;
				\item $w(t)=e^{s_0 t} u(t)=(e_{-s_0}\cdot u)(t), s_0\in\C \Rightarrow (\cL w)(s)=(\cL u)(s-s_0)$, $\Re s> \absa(u)+\Re s_0$;
			\end{enumerate}
		\item if $u^\prime\in\funLTR$, then $(\cL u^\prime)(s) = s(\cL u)(s)$, $\Re s>\max\{\absa(u),\absa(u^\prime)\}$;
		\item if the function $w(t)=\dfrac{u(t)}{t}$ belongs to $\funLTR$,
		then $\displaystyle (\cL w)(s)=\int_s^\infty (\cL u)(\tau)\,d\tau$, $\Re s>\absa(u)$, where the integral can
		be possibly understood in improper sense;
		\item $u*v\in\funLTR$ and $(\cL(u*v))(s)=(\cL u)(s)\cdot (\cL v)(s)$, $\Re s > \max\{\absa(u),\absa(v)\}$.
	\end{enumerate}
\end{thm}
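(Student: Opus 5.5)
Each item will be reduced to a statement about integrals of $L^1$ functions, justified by the standing fact (Remark \ref{rem:LTfun}) that $e_s u, e_s v\in L^1(\R)$ whenever $\Re s$ exceeds the abscissae involved. In every case the support condition \ref{LT:supp} is checked directly: dilation by $c>0$, translation by $t_0>0$, multiplication by an exponential, and division by $t$ preserve $\supp\subseteq[0,+\infty)$. The convolution $u*v$ of two functions supported in $[0,+\infty)$ is supported in $[0,+\infty)$, and there the integral defining it runs over the compact interval $[0,t]$, so it is well defined in $L^1_{\loc}$. For (1)(a) I take $c>0$ (for $c<0$ the support condition fails unless $u=0$, so that case is implicitly excluded). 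Condition \ref{LT:expfun} and the formula then follow from the substitution $\tau=ct$ in $\int e^{-st}u(ct)\,dt$, which gives $e^{-\lambda t}u(ct)\in L^1$ for $\lambda>c\absa(u)$. Items (1)(b) and (1)(c) are similarly immediate, from the change of variables $\tau=t-t_0$ and from the identity $e^{-st}e^{s_0t}=e^{-(s-s_0)t}$.

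For (2) I read the hypothesis as saying that $u$ is locally absolutely continuous on $\R$ with $u'\in\funLTR$; this is the reading under which the formula holds without boundary terms. Since $u$ vanishes on $(-\infty,0)$, continuity forces $u(0)=0$. Integrating by parts on $[0,T]$ gives $\int_0^T e^{-st}u'(t)\,dt=e^{-sT}u(T)+s\int_0^T e^{-st}u(t)\,dt$. The key point is to show that $e^{-sT}u(T)\to0$ as $T\to+\infty$ along a suitable sequence. This holds because $e^{-\Re s\,t}u(t)\in L^1$ forces $\liminf_{T\to\infty}|e^{-sT}u(T)|=0$, while the limit itself exists because both integrals converge. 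Hence the limit is $0$.

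For (4), Tonelli's theorem applied to $|e_{\lambda}u|$ and $|e_{\lambda}v|$, using $e^{-\lambda t}=e^{-\lambda(t-\tau)}e^{-\lambda\tau}$, gives $e_\lambda(u*v)\in L^1$ with $\|e_\lambda(u*v)\|_1\le\|e_\lambda u\|_1\|e_\lambda v\|_1$. Fubini then yields the product formula. For (3), Theorem \ref{thm:LTfunthma}(3) gives $\frac{d}{ds}\cL w=-\cL u$ on $\C_{\absa(w)}$. By Proposition \ref{prop:LTfunprop}, more precisely by dominated convergence, $(\cL w)(\sigma)\to0$ as $\sigma\to+\infty$ along the real direction. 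Integrating along the horizontal ray from $s$ to $+\infty$ gives $(\cL w)(s)=\int_s^{\infty}(\cL u)(\tau)\,d\tau$, as an improper integral.

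The main obstacle I expect is the boundary term in (2). It needs the precise meaning of $u'$ (absolute continuity, so that no delta at $0$ appears) and a careful argument that the limit along $T\to\infty$ vanishes. A smaller obstacle is domain bookkeeping in (3), where the ray must lie in a region in which both transforms are defined, namely $\Re s>\max\{\absa(u),\absa(w)\}$. The alternative route, Fubini on $\int_s^\infty\int e^{-\tau t}u(t)\,dt\,d\tau$, requires $u(t)/t$ integrable near $0$, which is exactly the hypothesis $w\in\funLTR$.
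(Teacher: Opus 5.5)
The paper gives no proof of this theorem: it is quoted as a well-known fact, following \cite{Gilardi} (cf.\ \cite{Doetsch}), so there is no argument in the paper to compare yours with. Your proposal is the standard self-contained proof, and it is correct. It uses changes of variables, integration by parts with control of the boundary term, Tonelli--Fubini, and integration of $\frac{d}{ds}\cL w=-\cL u$ along a horizontal ray.

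What it adds over the bare citation is that it pins down the hypotheses under which the printed statement actually holds. Your restriction to $c>0$ in (1)(a) is necessary, not merely convenient. For $c<0$ one has $\supp w\subseteq(-\infty,0]$, so $w\in\funLTR$ forces $u=0$ a.e. Likewise, in (2) the right reading of $u'$ is the distributional derivative on $\R$, equivalently $u$ locally absolutely continuous with $u(0)=0$. With the pointwise derivative on $(0,+\infty)$ the formula becomes $s(\cL u)(s)-u(0^+)$, which is the version the paper itself uses later for $dF/dt$. Your liminf argument combined with existence of the limit correctly kills the boundary term $e^{-sT}u(T)$.

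Two small sharpenings. First, in (3) the domain issue you flag disappears, because $\absa(w)=\absa(u)$. This is Theorem \ref{thm:LTfunthma}(2) applied to $w$ with $k=1$. Directly: $1/t\le 1$ on $[1,+\infty)$, and $w\in L^1_{\loc}$ handles $[0,1]$. Hence your ray argument gives the formula on the whole half-plane $\Re s>\absa(u)$. The Fubini computation you mention even yields absolute convergence along the ray:
\[
\int_0^{+\infty}|(\cL u)(s+\rho)|\,d\rho\le\int_0^{+\infty}e^{-t\Re s}\,\frac{|u(t)|}{t}\,dt=\|e_{\Re s}\,w\|_{L^1(\R)}<+\infty .
\]
Second, in (4), the fact that $u*v$ is defined a.e.\ and lies in $L^1_{\loc}$ comes from your Tonelli bound, not from the compactness of $[0,t]$. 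For $u,v\in L^1_{\loc}$, the integral $\int_0^t|u(t-\tau)v(\tau)|\,d\tau$ may diverge at individual points $t$. You should also say explicitly that $e_\lambda u\in L^1$ for every $\lambda>\absa(u)$, which holds thanks to the support condition. This is what lets you take a common $\lambda>\max\{\absa(u),\absa(v)\}$.
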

The definition of $\cL$-transformable distributions is given in analogy to Definition \ref{def:LTfun}, substituting $L^1(\R)$ with $\scS^\prime(\R)$.
Notice that, when $u$ has compact support, the integral \eqref{eq:LTufun} can be interpreted as the action of $u$ as a distribution on the family 
of functions $e_s$. This, together with the hypotheses on the support of $u$, leads to the following definition of Laplace transform of 
$\cL$-transformable distributions.
\begin{defn}\label{def:LTdis}
	A distribution $u\in\scD^\prime(\R)$ is called ($\cL$-)transformable, and we write $u\in\disLTR$, if it satifies
	\ref{LT:supp} and
	\begin{enumerate}[label={($\cL\mathrm{T}_\arabic*$)},ref={($\cL\mathrm{T}_\arabic*$)}]
		\setcounter{enumi}{2}
		\item\label{LT:expdis} there exists $\lambda\in\R$ such that $e_\lambda u\in \scSp(\R)$.
	\end{enumerate}
	The number $\lambda(u)=\inf\{\lambda\in\R\colon \text{condition \ref{LT:expdis} holds true}\}$ is called \textit{abscissa of convergence}
	of the Laplace integral of $u$. $\disLTR$ is a vector space. 
	
	Let $\zeta\in C^\infty(\R)$ satisfy, for some $a>0$
	\begin{equation}\label{eq:zetacond}
		\zeta(t)=0 \text{ for $t\in(-\infty,-a]$ and } \zeta(t)=1 \text{ for $t\in[-a/2,+\infty)$}. 
	\end{equation}
	The Laplace transform of $u\in\disLTR$ is the function defined by
	\begin{equation}\label{eq:LTudis}
		(\cL u)(s)=(e_\lambda u)(e_{s-\lambda}\zeta),
	\end{equation}
	for any $s\in\C$ such that $\Re s>\lambda(u)$, with $\lambda\in(\lambda(u),\Re s)$.
\end{defn}
\begin{rem}\label{rem:LTdis}
	The definition \eqref{eq:LTudis} makes sense, since, under the hypotheses on $u,s,\lambda$, and $\zeta$, $e_\lambda u\in\scSp(\R)$
	and $e_{s-\lambda}\zeta\in\scS(\R)$. Moreover, if $u\in\funLTR$, for $s,\lambda$, and $\zeta$ as in Definition \ref{def:LTdis}, we find
	\[
		\int_{-\infty}^{+\infty} e^{-st} u(t)\,dt = 
		\int_{-\infty}^{+\infty} e^{-\lambda t} e^{-(s-\lambda)t}\zeta(t)\,u(t)\,dt = 		
		\int_{-\infty}^{+\infty} e^{-\lambda t} u(t) \, e^{-(s-\lambda)t}\zeta(t)\,dt=(e_\lambda u)(e_{s-\lambda}\zeta),
	\]
	so Definition \ref{def:LTdis} is consistent with Definition \ref{def:LTfun}, and clearly $\cL u$, defined on $\C_{\absa(u)}$,
	does not depend on $\lambda$, $\zeta$, $a>0$, satisfying the hypotheses stated in Definition \ref{def:LTdis} when $u\in\funLTR$. 
	The same holds true for any $u\in\disLTR$ on $\C_{\lambda(u)}$. 
	Moreover,
	$u\in\scE^	\prime(\R)$ implies $u\in\disLTR$ with $\lambda(u)=-\infty$ and, in such case, one actually has $(\cL u)(s)=u(e_s)$, $s\in\C$.
\end{rem}
\begin{defn}\label{def:succLTdis}
	A sequence $(u_n)_{n\in\N}\subset\disLTR$ converges to $u\in\disLTR$ (or \textit{in $\disLTR$} or \textit{in the sense of ($\cL$)-transformable distributions})
	if there exists $\lambda\in\R$ such that 
	\begin{equation}\label{eq:convLTRdis}
		e_\lambda u_n \to e_\lambda u, \; n\to+\infty, \text{ in } \scSp(\R).
	\end{equation}
	Similarly, a series of distributions in $\disLTR$ converges to $u\in\disLTR$ if this holds true for the associated sequence of partial sums.
\end{defn}
\begin{thm}\label{thm:succLTdis}
	If a sequence $(u_n)_{n\in\N}\subset\disLTR$ converges to $u$ in $\disLTR$, then there exists $\lambda\in\R$ such that $\lambda(u_n)\le\lambda$
	for any $n\in\N$, $\lambda(u)\le\lambda$, and $(\cL u_n)_{n\in\N}$ is pointwise convergent to $\cL u$ in $\C_\lambda$. More precisely, such claims
	hold true if $\lambda$ satisfies \eqref{eq:convLTRdis}.
\end{thm}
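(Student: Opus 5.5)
We fix $\lambda$ as in \eqref{eq:convLTRdis}, so that $e_\lambda u_n\to e_\lambda u$ in $\scSp(\R)$, with all $e_\lambda u_n$ and $e_\lambda u$ in $\scSp(\R)$. The first two claims should then follow directly from the definitions. The third claim reduces to the fact that convergence in $\scSp(\R)$ gives convergence when the distributions act on a fixed test function in $\scS(\R)$.

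\emph{Step 1 (abscissae).} Since $e_\lambda u_n\in\scSp(\R)$ for every $n$, condition \ref{LT:expdis} holds for $u_n$ with this $\lambda$. By the definition of the abscissa of convergence in Definition \ref{def:LTdis}, this gives $\lambda(u_n)\le\lambda$. The limit $u$ also satisfies $e_\lambda u\in\scSp(\R)$, since the convergence takes place in $\scSp(\R)$, so $\lambda(u)\le\lambda$. The support condition \ref{LT:supp} holds for $u$ because $u\in\disLTR$ by hypothesis. One could also check it directly: $e_\lambda$ is smooth and positive, so $\supp e_\lambda u_n=\supp u_n\subseteq[0,+\infty)$, and this closed-support property passes to the $\scSp$-limit.

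\emph{Step 2 (pointwise convergence).} Fix $s\in\C_\lambda$ and a cutoff $\zeta$ as in \eqref{eq:zetacond}. The main obstacle is that \eqref{eq:LTudis} asks for an auxiliary $\lambda'\in(\lambda(u_n),\Re s)$, and $\lambda$ itself is a legitimate choice only when $\lambda>\lambda(u_n)$. To handle the borderline case I would first show that the formula $(\cL u_n)(s)=(e_{\lambda} u_n)(e_{s-\lambda}\zeta)$ is valid whenever $e_\lambda u_n\in\scSp(\R)$ and $\Re s>\lambda$, even if $\lambda=\lambda(u_n)$. Take $\lambda'\in(\lambda(u_n),\Re s)$ with $\lambda'\le\lambda$. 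Then $e_{\lambda'}u_n=e_{\lambda'-\lambda}\zeta\cdot e_\lambda u_n$. The factor $e_{\lambda'-\lambda}\zeta$ is a multiplier on $\scS(\R)$ and $\scSp(\R)$, since it grows at most polynomially together with its derivatives on the support of $\zeta$. Using this identity, $\zeta^2$ can be replaced by $\zeta$ modulo a term vanishing near $\supp u_n$. Hence
\[
(e_{\lambda'}u_n)(e_{s-\lambda'}\zeta)=(e_\lambda u_n)(e_{s-\lambda}\zeta^2)=(e_\lambda u_n)(e_{s-\lambda}\zeta),
\]
where the last equality holds because $\zeta^2-\zeta$ vanishes on $[-a/2,+\infty)\supseteq$ a neighbourhood of $\supp u_n$. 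This is essentially the $\lambda$-independence recorded in Remark \ref{rem:LTdis}, and the same applies to $u$.

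\emph{Step 3 (conclusion).} With this common representation, $(\cL u_n)(s)=(e_\lambda u_n)(\phi_s)$ and $(\cL u)(s)=(e_\lambda u)(\phi_s)$, where $\phi_s=e_{s-\lambda}\zeta\in\scS(\R)$ is fixed and does not depend on $n$. Convergence in $\scSp(\R)$ implies at least weak-$*$ convergence, so $(e_\lambda u_n)(\phi_s)\to(e_\lambda u)(\phi_s)$. This yields pointwise convergence of $\cL u_n$ to $\cL u$ on $\C_\lambda$ for every $\lambda$ satisfying \eqref{eq:convLTRdis}, as claimed.
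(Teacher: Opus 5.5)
Your overall strategy is sound. The paper quotes this theorem as a standard fact without proof, and your approach is the natural one: read off $\lambda(u_n)\le\lambda$ and $\lambda(u)\le\lambda$ from the definition, then reduce pointwise convergence to testing $e_\lambda u_n\to e_\lambda u$ against a fixed Schwartz function. Steps 1 and 3 are fine.

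However, Step 2, which you identify as the main obstacle, fails as written, because the inequality is reversed. You take $\lambda'\in(\lambda(u_n),\Re s)$ with $\lambda'\le\lambda$. In the borderline case $\lambda=\lambda(u_n)$, which is the case you want to treat, no such $\lambda'$ exists, since $\lambda'>\lambda(u_n)=\lambda$. Moreover, whenever $\lambda'<\lambda$, the factor $e_{\lambda'-\lambda}(t)\zeta(t)=e^{(\lambda-\lambda')t}\zeta(t)$ grows exponentially as $t\to+\infty$ on $\supp\zeta\supseteq[-a/2,+\infty)$. So your claim that it "grows at most polynomially" and is a multiplier of $\scS(\R)$ and $\scSp(\R)$ is false. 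The argument therefore either reduces to the trivial choice $\lambda'=\lambda$ (when $\lambda>\lambda(u_n)$) or is empty (in the borderline case).

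The fix is immediate. Choose $\lambda'\in(\lambda,\Re s)$; this interval is nonempty because $s\in\C_\lambda$, and $\lambda'>\lambda\ge\lambda(u_n)$ and $\lambda'>\lambda(u)$ hold automatically. Now $\lambda'-\lambda>0$, so $e_{\lambda'-\lambda}\zeta$ is bounded on $[-a,+\infty)$ and decays exponentially at $+\infty$ together with all its derivatives, i.e. $e_{\lambda'-\lambda}\zeta\in\scS(\R)$. Since $\zeta=1$ near $[0,+\infty)$, you get $e_{\lambda'}u_n=(e_{\lambda'-\lambda}\zeta)\,e_\lambda u_n$, and your $\zeta^2-\zeta$ computation goes through verbatim.

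You can also bypass the endpoint formula entirely. Multiplication by $e_{\lambda'-\lambda}\zeta\in\scS(\R)$ is continuous on $\scSp(\R)$, so $e_{\lambda'}u_n\to e_{\lambda'}u$ in $\scSp(\R)$. Since $\lambda'$ is admissible in \eqref{eq:LTudis} for every $u_n$ and for $u$, this gives directly $(\cL u_n)(s)=(e_{\lambda'}u_n)(e_{s-\lambda'}\zeta)\to(e_{\lambda'}u)(e_{s-\lambda'}\zeta)=(\cL u)(s)$.
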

\begin{cor}\label{cor:succLTdis}
	Let $(a_n)_{n\in\N}\subset\C$ and consider the series
	\begin{equation}\label{eq:exmplseries}
		\sum_{n=0}^\infty a_n t^n H(t)
	\end{equation}
	and the corresponding series of term-by-term Laplace transforms,
	\begin{equation}\label{eq:exmplseriesLT}
		\sum_{n=0}^\infty \frac{a_n n!}{s^{n+1}}.
	\end{equation} 
	Assume that $r>0$ is such that \eqref{eq:exmplseriesLT} converges for $|s|>r$. Then the series \eqref{eq:exmplseries} converges
	pointwise and in $\disLTR$ to $u\in\disLTR$ and $(\cL u)(s)$ is given by the sum of \eqref{eq:exmplseriesLT} on $\C_r$.
\end{cor}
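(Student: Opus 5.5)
The plan is to prove both claims of Corollary \ref{cor:succLTdis} directly, and then conclude via Theorem \ref{thm:succLTdis}. The key idea is to convert convergence of the Laurent-type series \eqref{eq:exmplseriesLT} for $|s|>r$ into a growth bound on the coefficients $a_n$. This bound makes the time-domain series converge to an entire function of exponential type, and hence converge in $\disLTR$.

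\emph{Step 1: coefficient bound.} Set $w=1/s$. The series $\sum_n a_n n!\, w^{n+1}$ converges for $|w|<1/r$. Therefore, by Cauchy--Hadamard (or simply boundedness of the terms), for every $\rho>r$ there is $C_\rho>0$ with
\[
	|a_n|\, n! \le C_\rho\, \rho^{n}, \qquad n\in\N.
\]

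\emph{Step 2: pointwise and exponential bounds.} From Step 1 we get $|a_n t^n H(t)|\le C_\rho \rho^n t^n H(t)/n!$. Hence the series \eqref{eq:exmplseries} converges absolutely and locally uniformly in $t$. Its sum $u(t)=H(t)\sum_n a_n t^n$ satisfies $|u(t)|\le C_\rho e^{\rho t}H(t)$, and the same bound holds uniformly for all partial sums $u_N$. In particular $u\in\funLTR\subset\disLTR$, since $\supp u\subseteq[0,+\infty)$ and $e_\lambda u\in L^1$ for $\lambda>\rho$.

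\emph{Step 3: convergence in $\disLTR$.} Fix $\lambda>\rho>r$. Then $e_\lambda u_N\to e_\lambda u$ pointwise, and by Step 2 we have $|e_\lambda(u_N-u)|\le 2C_\rho e^{-(\lambda-\rho)t}H(t)$. Dominated convergence gives $e_\lambda u_N\to e_\lambda u$ in $L^1(\R)$, hence in $\scSp(\R)$. This is exactly \eqref{eq:convLTRdis}.

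\emph{Step 4: identify the transform.} Theorem \ref{thm:succLTdis} now gives $(\cL u_N)(s)\to(\cL u)(s)$ on $\C_\lambda$. Each term transforms as $\cL(t^nH)(s)=n!/s^{n+1}$ for $\Re s>0$, either by Theorem \ref{thm:LTfunthma}(3) applied to $H$ with $\cL H=1/s$, or by direct integration. Hence $\cL u_N$ is the $N$-th partial sum of \eqref{eq:exmplseriesLT}. So $\cL u$ coincides with the sum of \eqref{eq:exmplseriesLT} on $\C_\lambda$ for every $\lambda>r$, that is, on $\C_r$.

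The step needing care is Step 3. The issue is choosing $\lambda$ independent of $N$ so that one $\lambda$ works in Definition \ref{def:succLTdis}. The uniform exponential bound from Step 1 handles this, taking any $\rho\in(r,\lambda)$.
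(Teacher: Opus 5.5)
Your proof is correct. The bound $|a_n|\,n!\le C_\rho\rho^n$ for $\rho>r$ gives an exponential majorant for all partial sums that is uniform in $N$, so the series converges in $\disLTR$ for any fixed $\lambda>r$; Theorem \ref{thm:succLTdis}, together with $\cL(t^nH)(s)=n!/s^{n+1}$, then identifies $\cL u$ with the sum of the series on every $\C_\lambda$, hence on $\C_r$. The paper states this corollary without proof, but its placement right after Theorem \ref{thm:succLTdis} points to exactly your route, so your argument supplies the omitted details along the intended lines.
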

The following approximation result will be useful in the sequel.
\begin{thm}\label{thm:apprdis}
	Let $u\in\disLTR$. Then there exists a sequence $(u_n)_{n\in\N}\subset\scD(\R)\cap\funLTR$ such that $(u_n)_{n\in\N}$
	converges to $u$ in $\disLTR$. More precisely, the sequence can be chosen such that $e_\lambda u_n \to e_\lambda u$
	in $\scSp(\R)$ for any $\lambda>\lambda(u)$.
\end{thm}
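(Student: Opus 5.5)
The approach is the standard regularization argument: approximate $u$ by compactly supported smooth functions after exponential damping. Fix $\lambda_0>\lambda(u)$, so that $v=e_{\lambda_0}u\in\scSp(\R)$ with $\supp v\subseteq[0,+\infty)$. We approximate $v$ in $\scSp(\R)$ by test functions supported in $[0,+\infty)$, and then multiply back by $e^{\lambda_0 t}$.

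The construction proceeds in three steps.

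\emph{Translation.} Set $v_h=\tau_h v$ for $h>0$, so that $\supp v_h\subseteq[h,+\infty)$. Since translation is continuous on $\scSp$, $v_h\to v$ as $h\to0^+$.

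\emph{Mollification.} Let $\rho\in\scD(\R)$ satisfy $\rho\ge0$, $\int\rho=1$, $\supp\rho\subseteq[-1,1]$, and set $\rho_\varepsilon=\varepsilon^{-1}\rho(\cdot/\varepsilon)$. Then $v_h*\rho_\varepsilon\in C^\infty$ is supported in $[h-\varepsilon,+\infty)\subseteq(0,+\infty)$ whenever $\varepsilon<h$, and $v_h*\rho_\varepsilon\to v_h$ in $\scSp$.

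\emph{Truncation.} With $\chi\in\scD(\R)$, $\chi=1$ on $[-1,1]$, and $\chi_R=\chi(\cdot/R)$, we have $\chi_R w\to w$ in $\scSp$ for every $w\in\scSp$, and $\chi_R w$ is compactly supported.

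A diagonal choice $h_n\to0$, $\varepsilon_n<h_n$, $R_n\to\infty$ then yields $w_n=\chi_{R_n}(\tau_{h_n}v*\rho_{\varepsilon_n})\in\scD(\R)$ with $\supp w_n\subseteq(0,+\infty)$ and $w_n\to v$ in $\scSp(\R)$. To make the diagonal choice legitimate, I would either note that bounded subsets of $\scSp$ used here lie in a fixed $H^{z,\zeta}$, by the structure \eqref{eq:spdecomp}, and argue in that Hilbert norm, or test against a countable family of seminorms.

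Finally define $u_n=e^{\lambda_0 t}w_n=e_{-\lambda_0}w_n\in\scD(\R)$. Each $u_n$ has support in $[0,+\infty)$ and compact support, so $u_n\in\funLTR$ with $\absa(u_n)=-\infty$.

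For the stronger claim, take any $\lambda>\lambda(u)$. Then
\[
e_\lambda u_n-e_\lambda u=e_{\lambda-\lambda_0}(w_n-v).
\]
The main obstacle is that $e_{\lambda-\lambda_0}$ is not a multiplier on $\scSp$ in general, because it grows at $-\infty$ when $\lambda<\lambda_0$. It is harmless here only because all the distributions involved are supported in $[-1,+\infty)$, say. Thus one may replace $e_{\lambda-\lambda_0}$ by $\zeta e_{\lambda-\lambda_0}$ with $\zeta$ as in \eqref{eq:zetacond}. This multiplier is bounded with all derivatives when $\lambda\ge\lambda_0$, and so maps $\scSp\to\scSp$ continuously, which settles that case.

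For $\lambda\in(\lambda(u),\lambda_0)$ the multiplier decays at $+\infty$, so it is even better behaved. However, the convergence $w_n\to v$ must then be upgraded. To handle this, I would redo the construction starting from $v^{(\lambda)}=e_\lambda u$ and note that the $w_n$ built from $e_{\lambda_0}u$ and from $e_\lambda u$ are related by $e_{\lambda_0-\lambda}$ up to commutators with the mollifier. Cleaner still, I would perform the mollification and truncation directly on $u$, i.e.\ take $u_n=\chi_{R_n}(\tau_{h_n}u*\rho_{\varepsilon_n})$. The key identity is then
\[
e_\lambda(\tau_h u*\rho_\varepsilon)=\tau_h(e_\lambda u)*(e_\lambda\rho_\varepsilon)\,e^{-\lambda h},
\]
and since $e_\lambda\rho_\varepsilon\to\delta$ in $\scE'$, this converges in $\scSp$ uniformly in the parameters for every $\lambda>\lambda(u)$ simultaneously. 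Checking the convergence $e_\lambda\rho_\varepsilon\to\delta$ together with $e^{-\lambda h}\to1$, and the commutation of $e_\lambda$ with the truncation $\chi_R$, is the only real computation.
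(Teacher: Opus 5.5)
The paper gives no proof of Theorem~\ref{thm:apprdis}; it lists it among the classical facts taken from \cite{Gilardi}, so there is no authors' argument to compare with. Your final construction is a correct and standard proof. Take $u_n=\chi_{R_n}(\tau_{h_n}u*\rho_{\varepsilon_n})$ with $0<\varepsilon_n<h_n\to0$ and $R_n\to+\infty$. Then $u_n\in\scD(\R)$ and $\supp u_n\subseteq[h_n-\varepsilon_n,+\infty)$, so $u_n\in\funLTR$. Fix $\lambda>\lambda(u)$; then $e_\lambda u\in\scSp(\R)$, by the same $\zeta$-multiplier argument you use. For $\phi\in\scS(\R)$, your conjugation identity gives
\[
\langle e_\lambda u_n,\phi\rangle=e^{-\lambda h_n}\langle e_\lambda u,\psi_n\rangle,\qquad
\psi_n=\tau_{-h_n}\bigl(g_n*(\chi_{R_n}\phi)\bigr),\qquad g_n(t)=e^{\lambda t}\rho_{\varepsilon_n}(-t).
\]
Since $\supp g_n\subseteq[-\varepsilon_n,\varepsilon_n]$, $\int g_n\to1$, and $\chi_{R_n}\phi\to\phi$ in $\scS(\R)$, one gets $\psi_n\to\phi$ in $\scS(\R)$. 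Hence $e_\lambda u_n\to e_\lambda u$ weakly, and therefore strongly ($\scS$ is Montel), in $\scSp(\R)$. This holds for \emph{arbitrary} parameter sequences and for all $\lambda>\lambda(u)$ at once, so the diagonal argument and the remark about a fixed $H^{z,\zeta}$ are unnecessary. This duality computation is also the precise content of your vague phrase ``uniformly in the parameters''.

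You should, however, correct the discussion of the intermediate route, which contains a sign error. We have $e_{\lambda-\lambda_0}(t)=e^{(\lambda_0-\lambda)t}$. It grows at $-\infty$ when $\lambda>\lambda_0$; that case is harmless, since all supports lie in $[0,+\infty)$ and $\zeta$ removes the growth, which is exactly what you do. It grows at $+\infty$ when $\lambda<\lambda_0$. So your claim that for $\lambda\in(\lambda(u),\lambda_0)$ the multiplier ``decays at $+\infty$'' and is ``even better behaved'' is false. This is precisely the case where $w_n\to v$ in $\scSp(\R)$ does not transfer by a multiplier argument. For example, $n^{-1}\delta_n\to0$ in $\scSp(\R)$, but $e^{ct}n^{-1}\delta_n=n^{-1}e^{cn}\delta_n$ with $c=\lambda_0-\lambda>0$ does not converge, as testing against $e^{-\langle t\rangle^{1/2}}$ shows.

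Your first construction still works, but only because of its specific structure:
\[
e_{-\lambda_0}w_n=e^{\lambda_0h_n}\chi_{R_n}\bigl(\tau_{h_n}u*(e_{-\lambda_0}\rho_{\varepsilon_n})\bigr)
\]
is again of the direct form, with the approximate identity $e_{-\lambda_0}\rho_{\varepsilon_n}$. So switching to the direct construction is not merely ``cleaner''. It, or the conjugation identity it rests on, is what actually handles $\lambda<\lambda_0$.
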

The next two Theorems \ref{thm:LTfunthma} and \ref{thm:LTfunthmb} are the analog in $\disLT$ of Theorems \ref{thm:LTfunthma} and \ref{thm:LTfunthmb}
in $\funLT$.
\begin{thm}\label{thm:LTdisthma}
	Let $u\in\disLTR$. Then:
	\begin{enumerate}
		\item $\cL u$ is holomorphic in $\C_{\lambda(u)}$;
		\item the distribution $v= \mathrm{id}\cdot u$, $\mathrm{id}(t)=t$, belongs to $\disLTR$ and its abscissa of convergence is $\lambda(u)$;
		\item it holds $\displaystyle\frac{d}{ds}(\cL u)(s)=-(\cL v)(s)$, $\Re s>\lambda(u)$.
	\end{enumerate}
\end{thm}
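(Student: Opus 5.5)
We prove Theorem \ref{thm:LTdisthma} by working directly with the definition \eqref{eq:LTudis}. Fix $s_0\in\C_{\lambda(u)}$, choose $\lambda\in(\lambda(u),\Re s_0)$ and $\zeta$ as in \eqref{eq:zetacond}. Set $w=e_\lambda u\in\scSp(\R)$. Then, for all $s$ in a small disc $D$ around $s_0$ with $\Re s>\lambda+\delta$ for some $\delta>0$, we have
\[
	(\cL u)(s)=w(\phi_s), \quad \phi_s=e_{s-\lambda}\zeta.
\]
Holomorphy will follow once we show that $s\mapsto\phi_s$ is a holomorphic (in particular complex differentiable) map $D\to\scS(\R)$. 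Its derivative should be $\partial_s\phi_s=-\mathrm{id}\cdot e_{s-\lambda}\zeta$. Since $w$ is linear and continuous on $\scS(\R)$, this gives at once
\[
	\frac{d}{ds}(\cL u)(s)=-w(\mathrm{id}\cdot e_{s-\lambda}\zeta).
\]

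The key step, and the main technical point, is to prove that the difference quotients
\[
	\frac{\phi_{s+h}-\phi_s}{h}+\mathrm{id}\cdot\phi_s
\]
tend to $0$ in $\scS(\R)$ as $h\to0$. I would write
\[
	\frac{e^{-ht}-1}{h}+t=h\int_0^1(1-\tau)\,t^2e^{-\tau ht}\,d\tau
\]
(Taylor formula with integral remainder). On $\supp\zeta\subseteq[-a,\infty)$, for $|h|<\delta/2$, every factor $e^{-\tau h t}e^{-(s-\lambda)t}$ is bounded by $C\,e^{-\delta t/2}$. Hence each Schwartz seminorm $\sup_t|t^k\partial_t^\ell(\cdot)|$ of the expression is $O(|h|)$: derivatives only produce polynomial factors in $t$ and $h$, the exponential decay for $t\to+\infty$ absorbs them, and the region $t\in[-a,0]$ is compact. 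This uniform-in-$\tau$ estimate is where care is needed, but it is elementary.

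For (2), $v=\mathrm{id}\cdot u$ is supported in $[0,\infty)$, and $e_\mu v=\mathrm{id}\cdot(e_\mu u)\in\scSp(\R)$ whenever $e_\mu u\in\scSp(\R)$, since multiplication by $t$ preserves $\scSp$. Thus $\lambda(v)\le\lambda(u)$. For the reverse inequality, take $\mu>\lambda(v)$ and $\mu'>\mu$. Write $u=u\chi+u(1-\chi)$ with $\chi\in C_0^\infty$ equal to $1$ near $[0,1]$. The first piece is in $\scE'(\R)$, so its abscissa is $-\infty$. On the support of $1-\chi$ one has $u=\frac{1-\chi}{t}\,v$ with $\frac{1-\chi}{t}$ a multiplier in $\mathcal{O}_M$. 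Hence $e_{\mu}u\in\scSp(\R)$, and so $\lambda(u)\le\lambda(v)$. For the equality to hold even when $\lambda(u)=-\infty$ one simply argues with every $\mu$.

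For (3), we only need to identify $-w(\mathrm{id}\cdot e_{s-\lambda}\zeta)$ with $-(\cL v)(s)$. We have
\[
	w(\mathrm{id}\cdot e_{s-\lambda}\zeta)=(e_\lambda u)(\mathrm{id}\cdot e_{s-\lambda}\zeta)=(\mathrm{id}\cdot e_\lambda u)(e_{s-\lambda}\zeta)=(e_\lambda v)(e_{s-\lambda}\zeta)=(\cL v)(s).
\]
Here $\lambda\in(\lambda(v),\Re s)$ is admissible by (2), and we use the independence of $\cL v$ from $\lambda$ and $\zeta$ noted in Remark \ref{rem:LTdis}.

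Alternatively, and as a consistency check, one can approximate $u$ by Theorem \ref{thm:apprdis}, apply Theorem \ref{thm:LTfunthma}, and pass to the limit. However, exchanging the derivative with the limit would require locally uniform convergence, which Theorem \ref{thm:succLTdis} does not directly provide. For this reason I prefer the direct argument above.
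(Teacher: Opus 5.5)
Your proof is correct. The paper itself gives no proof of Theorem \ref{thm:LTdisthma}: it lists it among the ``well-known'' facts taken from \cite{Gilardi} (cf.\ \cite{Doetsch,AMP}). So the comparison is between your self-contained argument and a citation.

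Your argument works directly with the paper's own definition \eqref{eq:LTudis}. You show that $s\mapsto e_{s-\lambda}\zeta$ is complex differentiable as an $\scS(\R)$-valued map, with derivative $-\mathrm{id}\cdot e_{s-\lambda}\zeta$. The integral Taylor remainder and the uniform bound $C e^{-\delta t/2}$ on $[-a,+\infty)$ are exactly what this needs. You then compose with the fixed functional $e_\lambda u\in\scSp(\R)$. Part (2) makes every $\lambda\in(\lambda(u),\Re s)$ admissible for $v$, which identifies the derivative as $-(\cL v)(s)$. The splitting $u=\chi u+\frac{1-\chi}{t}\,v$ used for $\lambda(u)\le\lambda(v)$ is also sound.

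You use two facts tacitly, and the paper takes the same ones for granted in Definition \ref{def:LTdis} and Remark \ref{rem:LTdis}:
\begin{enumerate}
\item[(a)] $\cL u$ is independent of $\lambda$ and $\zeta$;
\item[(b)] the set of admissible $\lambda$ is upward closed. This holds because $\supp u\subseteq[0,+\infty)$ gives $e_{\lambda'}u=(\zeta e_{\lambda'-\lambda})\,e_\lambda u$ with $\zeta e_{\lambda'-\lambda}\in\scS(\R)$ for $\lambda'>\lambda$.
\end{enumerate}
The auxiliary $\mu'$ in your part (2) is never used and can be dropped.

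What each approach buys: the citation keeps the paper short, but leaves the reader to check that the cut-off convention \eqref{eq:LTudis} matches the source. Your proof settles this within the paper's own framework.

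The approximation route you set aside via Theorem \ref{thm:apprdis} and Theorem \ref{thm:LTfunthma} can actually be rescued. Convergence of a sequence in $\scSp(\R)$ is uniform on bounded subsets of $\scS(\R)$, since $\scS(\R)$ is Montel. The set $\{e_{s-\lambda}\zeta\colon s\in K\}$ is bounded in $\scS(\R)$ for compact $K\subset\C_\lambda$, by the very estimates you prove. Hence the pointwise convergence of Theorem \ref{thm:succLTdis} upgrades to locally uniform convergence, and Weierstrass' theorem applies. That route, however, needs your estimates anyway plus extra functional analysis, so your direct argument is the more economical one.
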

\begin{thm}\label{thm:LTdisthmb}
	Let $u,v\in\disLTR$. Then:
	\begin{enumerate}
		\item the following distributions are in $\disLTR$, too, and the corresponding formulae hold true:
			\begin{enumerate}
				\item $\displaystyle w=M_c^*u, M_c\colon t\mapsto c\cdot t, c\not=0 \Rightarrow (\cL w)(s)=\frac{1}{c}(\cL u)\left(\frac{s}{c}\right)$, $\Re s> c\lambda(u)$;
				\item $w=\tau_{t_0}u, t_0>0 \Rightarrow (\cL w)(s)=e^{-t_0 s} (\cL u)(s)$, $\Re s > \lambda(u)$;
				\item $w=e_{-s_0}\cdot u, s_0\in\C \Rightarrow (\cL w)(s)=(\cL u)(s-s_0)$, $\Re s> \lambda(u)+\Re s_0$;
			\end{enumerate}
		\item $u\in\disLTR\Leftrightarrow u^\prime\in\disLTR$, $\lambda(u^\prime)\le\lambda(u)$, and $(\cL u^\prime)(s) = s(\cL u)(s)$, $\Re s>\lambda(u)$;
		%
%		\item if the function $w(t)=\dfrac{u(t)}{t}$ belongs to $\funLTR$,
%		then $\displaystyle (\cL w)(s)=\int_s^\infty (\cL u)(\tau)\,d\tau$, $\Re s>\absa(u)$, where the integral can
%		be possibly understood in improper sense;
%		%
		\item $u*v\in\disLTR$ and $(\cL(u*v))(s)=(\cL u)(s)\cdot (\cL v)(s)$, $\Re s > \max\{\lambda(u),\lambda(v)\}$.
	\end{enumerate}
\end{thm}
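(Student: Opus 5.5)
The plan is to reduce everything to the $\C$-valued facts already recorded for $\funLTR$ (Theorem \ref{thm:LTfunthmb}) via the density result, Theorem \ref{thm:apprdis}, together with the continuity result, Theorem \ref{thm:succLTdis}. Where density is awkward, I would instead argue directly from Definition \ref{def:LTdis}, i.e. $(\cL u)(s)=(e_\lambda u)(e_{s-\lambda}\zeta)$, and track how each operation acts on the test function $e_{s-\lambda}\zeta$.

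For (1)(a)--(c) I argue directly. For (b), $e_\lambda\tau_{t_0}u=e^{-\lambda t_0}\tau_{t_0}(e_\lambda u)\in\scSp$, so $\lambda(w)\le\lambda(u)$. The support stays in $[0,+\infty)$ since $t_0>0$. Moreover $(e_\lambda\tau_{t_0}u)(e_{s-\lambda}\zeta)=e^{-\lambda t_0}(e_\lambda u)(\tau_{-t_0}(e_{s-\lambda}\zeta))$, and $\tau_{-t_0}(e_{s-\lambda}\zeta)=e^{-(s-\lambda)t_0}e_{s-\lambda}\zeta(\cdot+t_0)$. The cut-off $\zeta(\cdot+t_0)$ still satisfies \eqref{eq:zetacond} up to a change of $a$, and independence of $\zeta$ (Remark \ref{rem:LTdis}) gives the formula. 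For (c), $e_\lambda e_{-s_0}u=e_{\lambda-\Re s_0}u\cdot e^{\ii \Im s_0 t}$, which is tempered once $\lambda-\Re s_0>\lambda(u)$. The identity then follows from $e_{-s_0}e_{s-\lambda}=e_{(s-s_0)-(\lambda-\Re s_0)}e^{\ii\Im s_0 t}$ after regrouping. For (a), the support condition forces $c>0$, and the pullback commutes with the pairing with Jacobian $1/c$, in the same way.

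For (2), if $u\in\disLTR$ then $e_\lambda u'=(e_\lambda u)'+\lambda e_\lambda u\in\scSp$ and $\supp u'\subseteq\supp u$, so $\lambda(u')\le\lambda(u)$. Then $(\cL u')(s)=(e_\lambda u)(-(e_{s-\lambda}\zeta)')+\lambda(\cL u)(s)=s(\cL u)(s)-(e_\lambda u)(e_{s-\lambda}\zeta')$. The last term vanishes because $\zeta'$ is supported in $[-a,-a/2]$, disjoint from $\supp u$.

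The converse direction of (2) is the main obstacle. Given $u\in\scD'$ supported in $[0,\infty)$ with $e_\lambda u'\in\scSp$, I must show $e_{\lambda'}u\in\scSp$. I would set $u=H*u'$, which is legitimate since both are supported in a right half-line: $u-H*u'$ has zero derivative and support bounded below, hence is $0$. For $\lambda'>\max\{\lambda,0\}$, $e_{\lambda'}u=(e_{\lambda'}H)*(e_{\lambda'}u')$. Here $e_{\lambda'}H\in L^1$ with exponential decay, and $e_{\lambda'}u'=e_{\lambda'-\lambda}e_\lambda u'$ is tempered and supported in a half-line. Such a convolution is tempered, e.g. by writing $e_\lambda u'$ as a finite derivative of a polynomially bounded continuous function supported in $[-1,\infty)$. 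This gives only $\lambda(u)\le\max\{\lambda(u'),0\}$, which is enough for membership. If the sharper comparison is needed, I would use this bound only for membership and rely on the inequality from the first direction.

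For (3), $e_\lambda(u*v)=(e_\lambda u)*(e_\lambda v)$, and the convolution of two tempered distributions supported in $[0,\infty)$ is tempered; I would prove this by the same finite-order representation. For the formula, I approximate $u,v$ by $u_n,v_n\in\scD\cap\funLTR$ via Theorem \ref{thm:apprdis}. Then $e_\lambda(u_n*v_n)\to e_\lambda(u*v)$ in $\scSp$ by the separate continuity of convolution on half-line-supported tempered distributions. Theorem \ref{thm:LTfunthmb}(4) applied to the $u_n,v_n$, combined with the pointwise convergence of Theorem \ref{thm:succLTdis}, then yields $\cL(u*v)=\cL u\cdot\cL v$ on $\Re s>\max\{\lambda(u),\lambda(v)\}$.
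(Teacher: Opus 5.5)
The paper gives no proof of this theorem. It is quoted as well known from the references named at the start of Section \ref{subs:LTandpsidos}, with Theorems \ref{thm:succLTdis} and \ref{thm:apprdis} recorded just before as the natural tools. So there is no argument in the paper to compare yours with. Read on its own, your proposal is an essentially correct, self-contained proof:
\begin{itemize}
\item direct manipulation of the pairing \eqref{eq:LTudis} for (1) and for the forward half of (2);
\item the representation $u=H*u'$ for the converse of (2);
\item density plus Theorem \ref{thm:succLTdis} for (3).
\end{itemize}
Your readings of the statement are also the right ones. In (1)(a) one must take $c>0$: for $c<0$, e.g. $u=H$, $M_c^*u$ is supported in $(-\infty,0]$ and is not in $\disLTR$. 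The converse in (2) is meaningful only for $u$ with $\supp u\subseteq[0,+\infty)$, and there your bound $\lambda(u)\le\max\{\lambda(u'),0\}$ is sharp ($u=H$, $u'=\delta$). That same example shows $\lambda(u)\le\lambda(u')$ can fail.

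Three points need tightening.

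First, in (1)(c) the displayed identity is false as written: its left side is $e_{s-\lambda-s_0}$, while its right side is $e_{s-\lambda}e^{2\ii\Im s_0 t}$. What you actually use, after moving the oscillating factor of $e_\lambda w=e^{\ii\Im s_0 t}e_{\lambda-\Re s_0}u$ onto the test function, is $e^{\ii\Im s_0 t}e_{s-\lambda}=e_{(s-s_0)-(\lambda-\Re s_0)}$, with $\lambda-\Re s_0\in(\lambda(u),\Re(s-s_0))$.

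Second, products such as $e_{\lambda'-\lambda}\cdot e_\lambda u'$ are tempered only because of the support condition. You should replace $e_{\lambda'-\lambda}$ by $\chi e_{\lambda'-\lambda}\in\mathcal{O}_M$, with $\chi=1$ near $[0,+\infty)$ and $\chi=0$ near $-\infty$. Say this once; it also shows that the admissible $\lambda$'s form an upper half-line, which you use throughout.

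Third, and most substantive: in (3), separate continuity of the convolution does not by itself give $e_\lambda(u_n*v_n)\to e_\lambda(u*v)$, since both factors move at once. Take the limits one at a time instead:
\begin{itemize}
\item For fixed $n$, $(e_\lambda u_n)*(e_\lambda v_m)\to(e_\lambda u_n)*(e_\lambda v)$ as $m\to\infty$. Theorems \ref{thm:LTfunthmb}(4) and \ref{thm:succLTdis} then give $\cL(u_n*v)=\cL u_n\cdot\cL v$ on $\C_\lambda$.
\item Then let $n\to\infty$, using the continuity of $T\mapsto T*(e_\lambda v)$ on tempered distributions supported in $[0,+\infty)$.
\end{itemize}
Alternatively, use that convergent sequences in $\scSp$ are bounded and that this convolution is hypocontinuous. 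With these repairs the argument is complete.
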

To employ the Laplace transform to solve Initial Value Problems associated with (partial) differential equations, 
one needs, on one hand, to handle initial values, on the other hand, to invert $\cL$, 
similarly to what happens with the Fourier transform $\scF$. 

\medskip

Concerning initial values, recall the following (distributional) identity for the functions $f,g\in C([0,\infty))$:
\begin{equation*}
	\partial_t(f*g)(t)=f(t)g(0)+(f*\partial_t g)(t), \;\mbox{ where } (f*g)(t)=\int_0^tf(t-s)g(s)\,ds, \ t\in[0,\infty).
\end{equation*}
Let $(\theta_\nu)_{\nu\in\N}\subset \scD(\R)$ be a sequence of test functions of the form $\theta_\nu(t)=\nu\theta(\nu t), \;  t\in\R,  \nu\in\N$,
where $\theta\in \scD(\R)$, $\supp\theta\subset[0,a]$ for some $a>0$, and
$ \int_{\mathbb R}\theta(t)dt=1$.  This is a delta sequence, that is, $\theta_\nu\rightarrow \delta$, $\nu\rightarrow \infty$, 
in $\mathcal S'(\R)$ and in $\mathcal S'([0,+\infty))$.
If $f$ is a derivative of order $k\in\N$ of some exponentially bounded continuous function supported in $[0,\infty)$, then
its Laplace transform is given by
\begin{equation}\label{LAP}
(\cL f)(s)=\lim_{\nu\rightarrow \infty} \cL(f*\theta_\nu)(s),\; \Re s>\lambda,
\end{equation}
if this limit exists for $ \Re s>\lambda$.
So, choosing $f=\dfrac{dF}{dt}$ and assuming that it is an exponentially bounded continuous function, \eqref{LAP} gives
\[
	\cL\left(\frac{d F}{dt}\right)(s)=s(\cL F)(s)-F(0),\quad \Re s>\lambda, \lambda\geq 0.
\]
Let  $F\in C([0,\infty))$ be exponentially bounded. There holds
\[
	\langle\partial_t(\theta_\nu*F)(t),e^{-st}\rangle=\langle \theta_\nu(t)F(0),e^{-st}\rangle+\langle (\theta_\nu*\partial_t F)(t),e^{-st}\rangle,
\]
which implies
\[
	s(\cL F)(s)=F(0)+(\cL f)(s), \quad \Re s>\lambda.
\]

\medskip

Concerning the inversion of $\cL$, it is necessary to characterise the functions $f$ of one complex variable
which are Laplace transforms of a distribution $u\in\disLT$ (or of a function $u\in\funLT$), and to identify $u$ by means of $f$. A first step
towards the inversion formula is given by Remarks \ref{rem:LTfun} and \ref{rem:LTdis}, concerning the relationship between Laplace and Fourier transforms.
Explicitly, in the case of $u\in\funLT$, setting $x=\Re s>\absa(u)$ and $y=\Im s$, we have that $f\colon y\mapsto (\cL u)(x+iy)$ is the Fourier transform
of the $L^1$ function $t\mapsto (e_x u)(t)$, and, conversely, the latter is the inverse Fourier transform of the former. Proceeding formally (this is
correct if $f\in L^1(\R_y)$),
\begin{align*}
	u(t)=e^{xt}\cdot e^{-xt}u(t)&=e^{xt}\cdot\frac{1}{2\pi}\int_{-\infty}^{+\infty}e^{iyt}f(x+iy)\,dy=\frac{1}{2\pi}\int_{-\infty}^{+\infty}e^{(x+iy)t}f(x+iy)\,dy
	\\
	&=\frac{1}{2\pi i}\lim_{R\to+\infty}\int_{-R}^{+R}e^{(x+iy)t}f(x+iy)\,i\,dy.
\end{align*}
Interpreting the last integral as a path-integral, one is led to
\[
	u(t)=\frac{1}{2\pi i}\lim_{R\to+\infty}\int_{[x-iR,x+iR]}e^{st}f(s)\,ds,
\]
or, as commonly written,
\begin{equation}\label{eq:RF}
	u(t)=\frac{1}{2\pi i}\int_{x-i\infty}^{x+i\infty}e^{st}f(s)\,ds.
\end{equation}
\eqref{eq:RF} is the so-called Riemann-Fourier formula. It holds true if $x>\absa(u)$ and $u$ satisfies some conditions,
precisely stated in Theorem \ref{thm:LTinv} below.

\begin{thm}\label{thm:LTinv}
	Let $f$ be a function of one complex variable. Then $f$ is the Laplace transform of a distribution $u\in\disLTR$ if and only if
	there exists $\lambda\in\R$ such that
	\begin{enumerate}[label={($\cL\mathrm{T}^{-1}_\arabic*$)},ref={($\cL\mathrm{T}^{-1}_\arabic*$)}]
		\item\label{point:ltinva} $f$ is holomorphic on the half-plane $\C_\lambda$;
		\item\label{point:ltinvb} $\exists M,m\colon |f(s)|\le M(1+|s|)^m$ for $s\in\C_\lambda$.
	\end{enumerate}
	If the conditions \ref{point:ltinva} and \ref{point:ltinvb} are satisfied, the inverse Laplace transform $\cL^{-1}f$ is uniquely
	determined by $f$ and satisfies
	\begin{equation}\label{eq:Ltinvchar}
		\cL^{-1}f = \left[\cL^{-1}_{s\to \cdot}\left(\frac{f(s)}{s^n}\right)\right]^{(n)},\quad n\in\N.
	\end{equation}
	Moreover, if
	\begin{equation}\label{eq:Ltinvreg}
		\exists \mu\ge \lambda\;\exists \alpha>1 \;\exists M^\prime\colon |f(s)|\le M^\prime|s|^{-\alpha},s\in\C_\mu,
	\end{equation}
	$\cL^{-1}f$ coincides with the function $u$ given by the Riemann-Fourier formula \eqref{eq:RF}, with arbitrary $x>\mu$,
	and $u$ is a continuous function on $\R$.
\end{thm}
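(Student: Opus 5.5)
The plan is to reduce everything to Fourier analysis on vertical lines, using the identity $(\cL u)(x+iy)=\widehat{e_x u}(y)$ from Remarks \ref{rem:LTfun} and \ref{rem:LTdis}.

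\emph{Necessity.} Take $u\in\disLTR$ and fix $\lambda>\lambda(u)$. Holomorphy \ref{point:ltinva} on $\C_\lambda$ is Theorem \ref{thm:LTdisthma}(1).

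For the growth bound \ref{point:ltinvb}, choose $\lambda_0\in(\lambda(u),\lambda)$. Then $e_{\lambda_0}u\in\scSp(\R)$, so it has finite order: there are $C,k$ with $|(e_{\lambda_0}u)(\phi)|\le C\sum_{j,l\le k}\sup_t\jap{t}^j|\phi^{(l)}(t)|$. Apply this with $\phi=e_{s-\lambda_0}\zeta$, as in \eqref{eq:LTudis}. Since $\Re(s-\lambda_0)\ge\lambda-\lambda_0>0$ and $\zeta$ vanishes for $t\le -a$, each term $\sup_t\jap{t}^j|\partial_t^l(e^{-(s-\lambda_0)t}\zeta(t))|$ is bounded by $C'(1+|s|)^l$. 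The constant $C'$ depends on $\lambda-\lambda_0$ but is uniform in $s\in\C_\lambda$. This gives $|f(s)|\le M(1+|s|)^k$.

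\emph{Sufficiency.} Assume \ref{point:ltinva} and \ref{point:ltinvb}. Set $g(s)=f(s)/s^{n}$ with $n\ge m+2$, working on $\C_\mu$ with $\mu=\max\{\lambda,1\}$. Then $|g(s)|\le M''|s|^{-2}$, so $g$ satisfies \eqref{eq:Ltinvreg}. The main step is therefore the regular case: under \eqref{eq:Ltinvreg}, define $v$ by the Riemann--Fourier formula \eqref{eq:RF} with $x>\mu$. The integral converges absolutely and uniformly in $t$, so $v$ is continuous.

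I then check three things, and I expect this to be the main obstacle:
\begin{enumerate}
\item $v$ is independent of $x$. This follows from Cauchy's theorem on rectangles $[x_1-iR,x_2+iR]$, where the horizontal sides vanish because $|g|\lesssim R^{-\alpha}$.
\item $\supp v\subseteq[0,\infty)$. For $t<0$, let $x\to+\infty$: $|v(t)|\le \frac{e^{xt}}{2\pi}\int |g(x+iy)|\,dy\lesssim e^{xt}x^{1-\alpha}\to0$.
\item $|v(t)|\lesssim e^{xt}$, so $e_{x'}v\in L^1$ for $x'>x$. Then, by Fourier inversion applied to $e_xv$ (which is bounded, with $y\mapsto g(x+iy)$ in $L^1$), one gets $\widehat{e_xv}(y)=g(x+iy)$, that is, $\cL v=g$.
\end{enumerate}

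Having $v$, define $u=v^{(n)}$. By Theorem \ref{thm:LTdisthmb}(2), $u\in\disLTR$ and $\cL u=s^n g=f$. This is exactly \eqref{eq:Ltinvchar}, and the final claim about \eqref{eq:RF} is the regular case itself with $n=0$.

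\emph{Uniqueness.} If $\cL u=0$ on a half-plane, then $\widehat{e_xu}=0$ as a tempered distribution for $x$ large; here Remark \ref{rem:LTdis} identifies $\cL u(x+i\cdot)$ with $\widehat{e_xu}$, the cut-off $\zeta$ being harmless on $\supp u$. Hence $e_xu=0$, so $u=0$. This also shows that \eqref{eq:Ltinvchar} is independent of $n$.
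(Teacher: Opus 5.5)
The paper gives no proof of this theorem. It quotes the result as known, following \cite{Gilardi}, so your argument has to be judged on its own. Your sufficiency part follows the standard route and is correct: reduce to $g=f/s^n$ with $|g|\lesssim|s|^{-2}$, define $v$ by the Riemann--Fourier formula, use Cauchy's theorem for independence of $x$, let $x\to+\infty$ for the support, apply Fourier inversion on vertical lines, and set $u=v^{(n)}$. Your uniqueness argument is also correct. One minor point: to identify $\widehat{e_xv}$ with $(\cL v)(x+i\cdot)$ you need $e_xv\in L^1$, not just boundedness. This follows from your bound $|v(t)|\lesssim e^{x't}$ for some $x'\in(\mu,x)$.

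The gap is in the necessity of \ref{point:ltinvb}. You treat the piece $[-a,0]$ of $\supp\zeta$ as harmless, but it is harmless only for fixed $s$, not uniformly on $\C_\lambda$. On $[-a/2,0]$ one has $\zeta=1$ and $|e^{-(s-\lambda_0)t}|=e^{(\Re s-\lambda_0)|t|}$, which reaches $e^{(\Re s-\lambda_0)a/2}$. The positivity of $\Re(s-\lambda_0)$ gives decay for $t>0$, but exponential growth in $\Re s$ for $t<0$. So with a fixed cut-off, the $\scS$-seminorms of $e_{s-\lambda_0}\zeta$ are not $O((1+|s|)^k)$. The seminorm estimate then only yields $|f(s)|\le C(1+|s|)^k e^{a\Re s}$, which is polynomial on vertical strips but not on the half-plane.

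The fix is to use that $\cL u$ does not depend on the cut-off or on $a>0$ (Remark~\ref{rem:LTdis}), and to let the cut-off shrink as $|s|$ grows. Take $\zeta_s(t)=\zeta(\rho_s t)$ with $\rho_s=|s-\lambda_0|$, which satisfies $\rho_s>\lambda-\lambda_0>0$.

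\begin{enumerate}
\item Then $\supp\zeta_s\cap(-\infty,0]\subseteq[-a/\rho_s,0]$.
\item On this interval, $|e^{-(s-\lambda_0)t}|\le e^{\rho_s\cdot a/\rho_s}=e^{a}$ and $\jap{t}\le\jap{a/(\lambda-\lambda_0)}$.
\item Each derivative that falls on $\zeta_s$ costs at most a factor $\rho_s\lesssim 1+|s|$.
\item The region $t\ge0$ is handled exactly as you did.
\end{enumerate}

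Hence $\sup_t\jap{t}^j|\partial_t^l(e_{s-\lambda_0}\zeta_s)(t)|\lesssim(1+|s|)^l$ uniformly for $s\in\C_\lambda$. This gives \ref{point:ltinvb} with $m=k$, the order of $e_{\lambda_0}u$.
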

\begin{rem}\label{rem:LTvector}
	Let $u\colon\R\to E$, where $E$ is a Frech\'et space. Definition \ref{def:LTfun} extends to this more general situation,
	and produces a function $\cL  u$ which is holomorphic and takes values in $E$. In a similar fashion, we can consider
	the Laplace transform of distributions taking values in the dual space $E^\prime$. The results above then extends, with
	straightforward modifications, to the spaces $\funLT(\R,\scS(\R^d))$, $\funLT(\R,\scSp(\R^d))$, $\disLT(\R,\scS(\R^d))$, and
	$\disLT(\R,\scSp(\R^d))$, 
\end{rem}
Let us now focus on the interplay between the Laplace transform of transformable distributions taking values in $\scSp$ and
pseudodifferential operators. The next Theorem \ref{thm:psidosLT} is commonly accepted, and we employed it in \cite{CGP1}.
However, since we could not find it stated or proved in the literature we could access, for the sake of completeness,
we give here a proof.
\begin{thm}\label{thm:psidosLT}
	Let $u\in\disLT(\R,\scSp(\R^d))$ and $a\in S^\mu(\R^d\times\R^d)$ or $a\in S^{m,\mu}(\R^d\times\R^d)$, $m,\mu\in\R$.
	Then, $\Op(a)u\in\disLT(\R,\scSp(\R^d))$, $\lambda(\Op(a)u)\le\lambda(u)$, and $\cL[\Op(a)u]=\Op(a)(\cL u)$ on $\C_{\lambda(u)}$.
\end{thm}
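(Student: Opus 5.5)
The plan is to reduce everything to the action of a single continuous linear operator $A=\Op(a)$ on $\scSp(\R^d)$, extended trivially in the $t$-variable. By Remark \ref{rem:O}(i)--(ii), $A$ is linear and continuous on $\scSp(\R^d)$ (and on $\scS(\R^d)$); this also holds for $a\in S^\mu$, since $S^\mu\subset S^{0,\mu}$. For $u\in\disLT(\R,\scSp(\R^d))$, viewed as an element of $\scD^\prime(\R,\scSp(\R^d))=\scrL(\scD(\R),\scSp(\R^d))$, we define $(Au)(\phi)=A(u(\phi))$ for $\phi\in\scD(\R)$. This is again a vector-valued distribution, because the composition of continuous linear maps is continuous.

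First we check the transformability conditions. For the support condition \ref{LT:supp}: if $\supp\phi\subset(-\infty,0)$, then $u(\phi)=0$, hence $(Au)(\phi)=0$. For \ref{LT:expdis}: multiplication by $e_\lambda$ acts on the test function side, so $(e_\lambda Au)(\phi)=A(u(e_\lambda\phi))=A((e_\lambda u)(\phi))$, that is, $e_\lambda Au=A(e_\lambda u)$. If $\lambda>\lambda(u)$, then $e_\lambda u\in\scSp(\R,\scSp(\R^d))=\scrL(\scS(\R),\scSp(\R^d))$. Composing with $A$ keeps it in this space, so $e_\lambda Au$ is tempered. This gives $Au\in\disLT(\R,\scSp(\R^d))$ and, taking the infimum over $\lambda$, $\lambda(Au)\le\lambda(u)$.

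Next we prove the commutation formula. Fix $s\in\C_{\lambda(u)}$, $\lambda\in(\lambda(u),\Re s)$ and $\zeta$ as in \eqref{eq:zetacond}. By Definition \ref{def:LTdis}, in its vector-valued form (Remark \ref{rem:LTvector}),
\[
	(\cL[Au])(s)=(e_\lambda Au)(e_{s-\lambda}\zeta)=A\big((e_\lambda u)(e_{s-\lambda}\zeta)\big)=A\big((\cL u)(s)\big),
\]
using the identity $e_\lambda Au=A(e_\lambda u)$ on the Schwartz test function $e_{s-\lambda}\zeta$. This is exactly $\cL[\Op(a)u]=\Op(a)(\cL u)$ on $\C_{\lambda(u)}$. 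Holomorphy of both sides follows from Theorem \ref{thm:LTdisthma} together with the continuity of $A$.

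The main obstacle is justifying the identity $(e_\lambda Au)(\psi)=A((e_\lambda u)(\psi))$ for general $\psi\in\scS(\R)$, not only for $\psi\in\scD(\R)$. The approach is a density argument. For $\psi\in\scD(\R)$ the identity holds by definition. Both sides are continuous in $\psi$ for the $\scS(\R)$ topology: the right-hand side because $e_\lambda u$ is tempered and $A$ is continuous, and the left-hand side because it coincides with the right-hand side on $\scD$, which is dense in $\scS$. Hence $e_\lambda Au$ extends uniquely, and the two sides agree on all of $\scS(\R)$.

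The remaining point is the choice of topology on $\scSp(\R^d)$. Continuity of $A$ must hold in the strong dual topology, which it does since $A$ is the transpose of the continuous map $\Op(a)^t$ on $\scS(\R^d)$. Alternatively, one can bypass these functional-analytic details by approximating $u$ as in Theorem \ref{thm:apprdis} and passing to the limit with Theorem \ref{thm:succLTdis}.
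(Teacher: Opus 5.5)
Your argument is correct, and it follows a genuinely different and more direct route than the paper's.

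\textbf{The paper's route.} The paper first treats elementary tensors $u=v\otimes w$ (Lemma \ref{lem:LTtenspr}). It then uses nuclearity to write $u=\sum_j u_j\,v_j\otimes w_j$, applies the lemma to the partial sums $u_N$, and passes to the limit. The limit step uses the vector-valued analogue of Theorem \ref{thm:succLTdis}, the continuity of $\Op(a)$ on $\scSp(\R^d)$, and the fact that $\mathrm{id}\otimes\Op(a)$ is an operator with an amplitude acting continuously on $\scSp(\R^{1+d})$.

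\textbf{Your route.} You work with Schwartz's definition $\scD'(\R,\scSp(\R^d))=\scrL(\scD(\R),\scSp(\R^d))$ and set $\Op(a)u=A\circ u$. This agrees with $\mathrm{id}\otimes\Op(a)$ on tensors, hence everywhere. You observe that $e_\lambda(A\circ u)=A\circ(e_\lambda u)$ extends continuously to $\scS(\R)$ whenever $e_\lambda u$ does. The commutation formula is then a one-line evaluation at $e_{s-\lambda}\zeta$.

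\textbf{Comparison.} Your route needs only the continuity of $A$ on $\scSp(\R^d)$ with the strong topology, so it works verbatim for any such operator, and it requires no approximation or limiting argument. In particular, it bypasses the step in the paper where temperedness of $e_\lambda v_j$ (and $v_j\in\disLT(\R)$) is read off from a nuclear expansion of $u$. That step requires some care, because such expansions are not unique: one can always add and subtract a non-transformable factor. The paper's route, in exchange, reduces everything to the scalar Laplace transform of the factors $v_j$. It also produces the explicit representation $(\cL u_N)(s)=\sum_{j\le N}u_j(\cL v_j)(s)\,w_j$, at the price of the nuclearity machinery.

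\textbf{One correction.} The inclusion $S^\mu\subset S^{0,\mu}$ is false; it goes the other way. SG symbols of $x$-order $0$ must gain a factor $\langle x\rangle^{-|\alpha|}$ under $x$-derivatives, so for instance $a(x,\xi)=\sin x$ lies in $S^0$ but not in $S^{0,0}$. Moreover, Remark \ref{rem:O} concerns only SG symbols. For $a\in S^\mu$, invoke instead the standard fact that $\Op(a)$ is continuous on $\scS(\R^d)$ and its transpose is again in $\Op(S^\mu)$. Hence $\Op(a)$ is continuous on $\scSp(\R^d)$ with the strong topology, and the rest of your argument is unaffected.
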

\begin{lem}\label{lem:LTtenspr}
	Let $u=v\otimes w$, where $v\in\disLT(\R)$ and $w\in\scSp(\R^d)$. Then, $u\in\disLT(\R,\scSp(\R^d))$ and, for 
	$a\in S^\mu(\R^d\times\R^d)$ or $a\in S^{m,\mu}(\R^d\times\R^d)$, $m,\mu\in\R$, 
	$\Op(a)u\in\disLT(\R,\scSp(\R^d))$, $\lambda(\Op(a)u)\le\lambda(u)$, and $\cL[\Op(a)u]=\Op(a)(\cL u)$ on $\C_{\lambda(u)}$.
\end{lem}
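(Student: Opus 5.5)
The plan is to treat the tensor-product structure explicitly, so that everything reduces to the scalar theory in the $t$-variable plus the continuity of $\Op(a)$ on $\scSp(\R^d)$.

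\emph{Step 1: $u\in\disLT(\R,\scSp(\R^d))$.} First I check that $u=v\otimes w$ is transformable. Since $\supp v\subseteq[0,+\infty)$, the $\scSp(\R^d)$-valued distribution $u$, acting by $\phi\mapsto v(\phi)\,w$, vanishes on test functions supported in $(-\infty,0)$. For $\lambda>\lambda(v)$ we have $e_\lambda u=(e_\lambda v)\otimes w$. This belongs to $\scSp(\R,\scSp(\R^d))$, because $e_\lambda v\in\scSp(\R)$ and $\phi\mapsto (e_\lambda v)(\phi)\,w$ is continuous $\scS(\R)\to\scSp(\R^d)$. Hence $\lambda(u)\le\lambda(v)$. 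For $\Re s>\lambda(u)$, choosing $\lambda\in(\lambda(v),\Re s)$ (allowed, since $\lambda(u)\le\lambda(v)$ and the definition is independent of $\lambda$ by Remark~\ref{rem:LTdis}), \eqref{eq:LTudis} gives
\[
(\cL u)(s)=(e_\lambda v)(e_{s-\lambda}\zeta)\,w=(\cL v)(s)\,w .
\]
I expect $\lambda(u)=\lambda(v)$ when $w\neq0$, by pairing with some $\psi\in\scS(\R^d)$ with $w(\psi)\neq0$. This is only needed to identify the domains, and for $w=0$ everything is trivial.

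\emph{Step 2: the action of $\Op(a)$.} By the continuity $\Op(a)\colon\scSp(\R^d)\to\scSp(\R^d)$ (Theorem~\ref{thm:sobcont} and Remark~\ref{rem:O}(i) in the SG case; the standard $\scSp$-continuity for $a\in S^\mu$), $\Op(a)$ acts on $\scSp(\R^d)$-valued distributions by composition:
\[
(\Op(a)u)(\phi)=\Op(a)\bigl(u(\phi)\bigr),
\]
which gives $\Op(a)u=v\otimes\Op(a)w$. Since $\Op(a)w\in\scSp(\R^d)$, Step 1 applied to $v\otimes\Op(a)w$ yields $\Op(a)u\in\disLT(\R,\scSp(\R^d))$ with $\lambda(\Op(a)u)\le\lambda(v)$. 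To compare with $\lambda(u)$, I argue directly: if $e_\lambda u\in\scSp(\R,\scSp(\R^d))$, then $e_\lambda\Op(a)u=\Op(a)(e_\lambda u)$, and composing a continuous linear map $\scS(\R)\to\scSp(\R^d)$ with a continuous linear map $\scSp(\R^d)\to\scSp(\R^d)$ stays in $\scSp(\R,\scSp(\R^d))$. This holds for every admissible $\lambda$, so $\lambda(\Op(a)u)\le\lambda(u)$.

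\emph{Step 3: commuting $\cL$ and $\Op(a)$.} For $\Re s>\lambda(u)$ and $\lambda\in(\lambda(u),\Re s)$,
\[
\cL[\Op(a)u](s)=\bigl(\Op(a)(e_\lambda u)\bigr)(e_{s-\lambda}\zeta)=\Op(a)\bigl((e_\lambda u)(e_{s-\lambda}\zeta)\bigr)=\Op(a)(\cL u)(s).
\]
Concretely, this equals $(\cL v)(s)\,\Op(a)w$.

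The only delicate point I anticipate is Step 2. There one must make precise the meaning of $\Op(a)$ on vector-valued distributions, namely as composition with the continuous operator on $\scSp(\R^d)$, and check that $\scSp(\R,\scSp(\R^d))\cong\scrL(\scS(\R),\scSp(\R^d))$ is stable under this composition. Once that identification is fixed, everything else is linear-algebraic bookkeeping.
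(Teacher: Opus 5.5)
Your proof is correct. Like the paper, you compute $(\cL u)(s)=(\cL v)(s)\,w$ and $\Op(a)u=v\otimes\Op(a)w$, but you obtain the commutation and the bound $\lambda(\Op(a)u)\le\lambda(u)$ differently.

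\textbf{The paper's route.} It places $e_\lambda u=(e_\lambda v)\otimes w$ in $\scSp(\R)\widehat{\otimes}\scSp(\R^d)\simeq\scSp(\R,\scSp(\R^d))$ using nuclearity, and reads $\Op(a)$ as $\mathrm{id}\otimes\Op(a)$. It then checks $\Op(a)((\cL u)(s))=(\cL v)(s)\,\Op(a)w=[\cL(v\otimes\Op(a)w)](s)$ by explicit tensor bookkeeping.

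\textbf{Your route.} You work in $\scSp(\R,\scSp(\R^d))=\scrL(\scS(\R),\scSp(\R^d))$ and let $\Op(a)$ act by post-composition. Then $e_\lambda\Op(a)u=\Op(a)\circ(e_\lambda u)$, and the commutation reduces to uniqueness of the continuous extension from $\scD(\R)$ to $\scS(\R)$.

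\textbf{What each buys.} Your route needs no nuclearity. More significantly, your Steps 2 and 3 never use $u=v\otimes w$, so they prove Theorem \ref{thm:psidosLT} for arbitrary $u\in\disLT(\R,\scSp(\R^d))$ directly. That makes three ingredients of the paper's proof of that theorem unnecessary: the nuclear series decomposition $u=\sum_j u_j\,v_j\otimes w_j$, the appeal to the analogue of Theorem \ref{thm:succLTdis}, and the amplitude identification $\mathrm{id}\otimes\Op(a)=\Op(b)$. In your argument the tensor structure only serves the explicit formula $(\cL v)(s)\,\Op(a)w$. The paper's tensor computation, by contrast, makes that explicit formula the primary object, which is the form its term-by-term extension relies on.

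\textbf{The abscissa.} You are more careful here than the paper, which simply writes $\lambda(u):=\lambda(v)$. Your pairing with some $\psi$ satisfying $w(\psi)\neq0$ actually proves $\lambda(v)\le\lambda(u)$.

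\textbf{One wording fix.} In Step 1, the parenthetical ``allowed, since $\lambda(u)\le\lambda(v)$'' has the inequality backwards. Choosing $\lambda\in(\lambda(v),\Re s)$ for every $\Re s>\lambda(u)$ requires $\lambda(v)\le\lambda(u)$, which is exactly what your pairing argument supplies. Step 3 does not need it at all, since there you take $\lambda\in(\lambda(u),\Re s)$.
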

\begin{proof}
	By definition, there exists $\lambda(v)$ such that for $\lambda>\lambda(v)$, $e_\lambda v\in\scSp(\R)$. 
	By nuclearity of $\scSp$, for $\lambda>\lambda(u):=\lambda(v)$,
	\[
		e_\lambda u =	(e_\lambda v)\otimes w\in\scSp(\R)\otimes\scSp(\R^d)\hookrightarrow
		\scSp(\R)\widehat{\otimes}\scSp(\R^d)\simeq\scSp(\R^{1+d})\simeq\scSp(\R,\scSp(\R^d)),
	\]
	which proves $u\in\disLT(\R,\scSp(\R^d))$. Then, under the stated hypothesis on the symbol $a$, for any 
	$\lambda>\lambda(u)$,
	\begin{align*}
		\Op(a)(e_\lambda u)=e_\lambda \Op(a)u&\equiv (\mathrm{id}\otimes\Op(a))(e_\lambda u)=(e_\lambda v)\otimes\Op(a)w
		\\
		&=e_\lambda(v\otimes\Op(a)w)\in\scSp(\R)\otimes\scSp(\R^d)\hookrightarrow\scSp(\R,\scSp(\R^d)).
	\end{align*}
	It follows that, as distributions in $\scD^\prime(\R,\scSp(\R^d))$, $\Op(a)u=v\otimes\Op(a)w$, and, for $\lambda>\lambda(u)$,
	$e_\lambda\Op(a)u=\Op(a)(e_\lambda u)\in\scSp(\R,\scSp(\R^d))$,
	which shows $\Op(a)u\in\disLT(\R,\scSp(\R^d))$ and $\lambda(\Op(a)u)\le\lambda(u)$.
	With $\varphi\in\scS(\R^d)$, $s\in\C$, $\lambda\in\R$ satisfying $\Re s>\lambda>\lambda(u)$, and $\zeta\in C^\infty(\R)$ as in Definition \ref{def:LTdis},
	we compute
	\begin{align*}
		[(\cL u)(s)](\varphi)&= [(e_\lambda u)(\zeta e_{s-\lambda})](\varphi) = ((e_\lambda v)\otimes w)((\zeta e_{s-\lambda})\otimes\varphi)
		= (e_\lambda v)(\zeta e_{s-\lambda})\cdot w(\varphi)=[(\cL v)(s)]\cdot w(\varphi)
		\\
		&\Leftrightarrow (\cL u)(s)=(\cL v)(s)\cdot w.
	\end{align*}
	Then, for $s\in\C$, $\lambda\in\R$ satisfying $\Re s>\lambda>\lambda(u)$, and $\zeta$ as above,
	\begin{align*}
		\Op(a)((\cL u)(s)) &= (\cL v)(s)\cdot\Op(a)w=(e_\lambda v)(\zeta e_{s-\lambda})\cdot\Op(a) w
		=[\cL(v\otimes\Op(a)w)](s)
		\\
		&= [\cL(\mathrm{id}\otimes\Op(a))(v\otimes w)](s)=[\cL(\Op(a)u)](s).
	\end{align*}
	The proof is complete.
\end{proof}
\begin{proof}[Proof of Theorem \ref{thm:psidosLT}]
	Notice that, in particular, $u\in\scD^\prime(\R,\scSp(\R^d))$. By nuclearity, $\scD^\prime(\R,\scSp(\R^d))\simeq\scD^\prime(\R)\widehat{\otimes}\scSp(\R^d)$,
	so that
	\[
		u=\sum_{j\in\N} u_j\,v_j\otimes w_j, \quad (u_j)_{j\in\N}\in\ell^1, \quad v_j\in\scD^\prime(\R),w_j\in\scSp(\R^d),j\in\N.
	\]
	Let $\lambda>\lambda(u)$. Then, by hypothesis, $e_\lambda u\in\scSp(\R,\scSp(\R^d))$, and, again by nuclearity,
	$\scSp(\R,\scSp(\R^d))\simeq\scSp(\R^{1+d})\simeq\scSp(\R)\widehat{\otimes}\scSp(\R^d)$, so that
	\begin{align*}
		e_\lambda u &=\sum_{j\in\N} u_j (e_\lambda v_j)\otimes w_j\in\scSp(\R)\widehat{\otimes}\scSp(\R^d)
		\\
		&\Rightarrow e_\lambda v_j\in\scSp(\R), j\in\N\Rightarrow v_j\in\disLT(\R) \mbox{ and } \lambda(v_j)\le\lambda(u), j\in\N.
	\end{align*}
	Consider now, for any $N\in\N$, the finite sums $\displaystyle u_N=\sum_{j=0}^N u_j \,v_j\otimes w_j$. For $\Re s>\lambda(u)$ and any $N\in\N$,
	by Lemma \ref{lem:LTtenspr}, we find
	\[
		(\cL u_N)(s)=\sum_{j=0}^N u_j (\cL v_j)(s)\cdot w_j, \qquad
		\Op(a)u_N=\sum_{j=0}^N u_j\,  v_j\otimes \Op(a)w_j,
	\]
	and
	\[
		[\cL(\Op(a)u_N)](s)=\sum_{j=0}^N u_j (\cL v_j)(s)\cdot \Op(a)w_j
		= \sum_{j=0}^N \Op(a)(u_j (\cL v_j)(s)\cdot w_j)  =\Op(a)((\cL u_N)(s)).
	\]
	Notice that $u_N\to u$, $N\to\infty$, in $\disLT(\R,\scSp(\R^d))$, since $e_\lambda u_N\to e_\lambda u$, $N\to\infty$, in $\scSp(\R,\scSp(\R^d))$,
	for any $\lambda>\lambda(u)$. Then, by the analogue of Theorem \ref{thm:succLTdis} in this setting, $\cL u_N\to\cL u$, $N\to\infty$, pointwise in
	$\cA(\C_{\lambda(u)},\scSp(\R^d))$ and $(\cL u_N)(s)\to(\cL u)(s)$, $N\to\infty$, in $\scSp(\R^d)$, $s\in\C_{\lambda(u)}$, which implies, by continuity
	of $\Op(a)$ on $\scSp(\R^d)$, 
	\[
		\Op(a)((\cL u_N)(s))\to \Op(a)((\cL u)(s)), \quad N\to\infty, s\in\C_{\lambda(u)}.
	\]
	Moreover, for $\psi$ as in \eqref{eq:LTudis}, $\Re s >\lambda>\lambda(u)$,
	\begin{align*}
		[\cL(\Op(a)u_N)](s)&=(e_\lambda\Op(a)u_N)(\zeta e_{s-\lambda})=[\Op(a)(e_\lambda u_N)](\zeta e_{s-\lambda})
		=[(\mathrm{id}\otimes\Op(a))(e_\lambda u_N)](\zeta e_{s-\lambda})
		\\
		&\to (\mathrm{id}\otimes\Op(a))(e_\lambda u)](\zeta e_{s-\lambda})=(e_\lambda\Op(a)u)(\zeta e_{s-\lambda})=[\cL(\Op(a)u)](s), \quad N\to\infty.
	\end{align*}
	The last claim follows from the fact that $\mathrm{id}\otimes\Op(a)=\Op(b)$, for an amplitude
	$b\in\cA^{\max\{\mu,0\}}(\R^d\times\R^d)$ or $b\in\cA^{\max\{m,0\}+\max\{\mu,0\}}(\R^d\times\R^d)$,
	respectively, and such operators $\Op(b)$ linearly and continuously map $\scS(\R^{1+d})$ into itself, 
	as well as $\scSp(\R^{1+d})$ into itself. The proof is complete.
\end{proof}

\begin{rem}\label{rem:LTinv}
	Results similar to Theorem \ref{thm:psidosLT} hold true for the inverse Laplace transform, even
	when the symbol $a$ depends also on the complex variable $s\in\C_\lambda$. In the latter case, convolution-like
	superpositions of actions of pseudodifferential operators appear (as in the proof of 
	Theorem 3 in \cite{CGP1}). %\ref{thm:main_nhom}). 
	Details are left for the reader.
\end{rem}

\medskip

We conclude this section with some results which relate the decay properties of 
a transformable function with those of the corresponding Laplace transform.
\begin{lem}
\label{lem:Watson_0}
Let $\psi\in L^1_{\loc}([0,+\infty))$ satisfy the asymptotic property
\[
	\psi(t)\sim B t^\sigma \quad \text{as} \quad t \to +\infty,
\]
for some constants $B\in \C$ and $\sigma\in \R$ with $\sigma>-1$. Then $\psi\in \funLTR$ and $\cL\psi$ satisfies
\[
	(\cL\psi)(s)\sim B \frac{\Gamma(\sigma+1)}{s^{\sigma+1}},
\]
as $s\to0$ within the angular region $|\arg(s)|\leq \tilde\theta<\pi/2$.
\end{lem}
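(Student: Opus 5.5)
This is the classical Abelian (Watson-type) theorem for the Laplace transform. The plan is to split $\psi$ into a pure power part and a remainder that is small compared with $t^\sigma$ at infinity, compute the Laplace transform of the power exactly, and show that the remainder contributes $o(|s|^{-\sigma-1})$ in the sector.

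\emph{Transformability.} The hypothesis $\psi(t)\sim Bt^\sigma$ gives $T>0$ with $|\psi(t)|\le (|B|+1)t^\sigma$ for $t\ge T$. Together with $\psi\in L^1_{\loc}$ and $\supp\psi\subseteq[0,+\infty)$ (extending by zero), this yields $e_\lambda\psi\in L^1(\R)$ for every $\lambda>0$. Hence $\psi\in\funLTR$ with $\absa(\psi)\le0$, and $\cL\psi$ is holomorphic on $\C_0$ by Theorem \ref{thm:LTfunthma}.

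\emph{Decomposition.} Fix $\varepsilon>0$ and choose $T=T(\varepsilon)$ such that $|\psi(t)-Bt^\sigma|\le\varepsilon t^\sigma$ for $t\ge T$. Write
\[
\psi=BH(t)t^\sigma+\chi_{[0,T]}(\psi-Bt^\sigma)+\chi_{(T,\infty)}(\psi-Bt^\sigma).
\]
Since $\sigma>-1$, $t^\sigma H(t)$ is locally integrable, and for $\Re s>0$ one has $\cL(t^\sigma H)(s)=\Gamma(\sigma+1)s^{-\sigma-1}$, using the principal branch. This is checked for real $s>0$ by the substitution $u=st$ and then extended to $\Re s>0$ by analytic continuation (both sides are holomorphic). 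The second term is compactly supported and integrable, so its transform is bounded by $\|\psi-Bt^\sigma\|_{L^1(0,T)}$ uniformly in $\Re s\ge0$, and $|s|^{\sigma+1}$ times this bound tends to $0$ as $s\to0$ because $\sigma+1>0$.

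\emph{The main obstacle is the third term,} which is where the sector condition enters. For $|\arg s|\le\tilde\theta$ we have $\Re s\ge|s|\cos\tilde\theta$, so
\[
\Big|\int_T^\infty e^{-st}(\psi-Bt^\sigma)\,dt\Big|\le\varepsilon\int_0^\infty e^{-|s|\cos\tilde\theta\, t}t^\sigma\,dt=\varepsilon\,\Gamma(\sigma+1)(|s|\cos\tilde\theta)^{-\sigma-1}.
\]
Multiplying all three pieces by $s^{\sigma+1}$ gives
\[
\limsup_{s\to0,\,|\arg s|\le\tilde\theta}\big|s^{\sigma+1}(\cL\psi)(s)-B\Gamma(\sigma+1)\big|\le\varepsilon\,\Gamma(\sigma+1)(\cos\tilde\theta)^{-\sigma-1}.
\]
Since $\varepsilon$ is arbitrary, the limit is $0$, which is the claimed asymptotics (interpreted as a ratio tending to $1$ when $B\ne0$, or as $o(|s|^{-\sigma-1})$ when $B=0$). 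The only delicate point is that the constant $(\cos\tilde\theta)^{-\sigma-1}$ blows up as $\tilde\theta\to\pi/2$. This is exactly why the statement requires $\tilde\theta<\pi/2$.
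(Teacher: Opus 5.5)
Your argument is correct and follows the same route as the paper, which simply defers to the proof of Theorem 34.1 in Doetsch's book, the classical Abelian argument: split off the exact power $BH(t)t^\sigma$, dispose of the compactly supported remainder using the factor $|s|^{\sigma+1}\to0$, and control the tail through $\Re s\ge|s|\cos\tilde\theta$, which is where $\tilde\theta<\pi/2$ enters. Your explicit check that $\psi$ is transformable with $\absa(\psi)\le 0$ also covers the first claim of the lemma, which the citation leaves implicit.
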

\begin{proof}
See the proof of Theorem 34.1 in \cite{Doetsch}.
\end{proof}
\begin{lem}
\label{lem:Watson_0<-1}
Let $\psi\in\funLTR$ satisfy $\lambda_a(\psi)=0$ and the asymptotic property
\[
	\psi(t)\sim B t^\sigma \quad \text{as} \quad t \to +\infty,
\]
for some constants $B\in \C$ and  $\sigma\in \R$ with $\sigma<-1$. Then there exists $C>0$ such that 
\[|(\cL\psi)(s)|\leq C,\]
uniformly with respect to $s$ in the angular region $|\arg(s)|\leq \tilde\theta<\pi/2$.
\end{lem}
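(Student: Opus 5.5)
The idea is to bound the Laplace integral directly, splitting the half-line at a fixed large time. Fix $\tilde\theta<\pi/2$ and $s=|s|e^{i\vartheta}$ with $|\vartheta|\le\tilde\theta$, so $\Re s=|s|\cos\vartheta\ge0$ and $|e^{-st}|=e^{-t\Re s}\le1$ for $t\ge0$.

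From the asymptotic hypothesis $\psi(t)\sim Bt^\sigma$ as $t\to+\infty$, there is $T>0$ such that $|\psi(t)|\le 2|B|t^\sigma$ for $t\ge T$. If $B=0$, read the relation as $\psi(t)=o(t^\sigma)$ and take $|\psi(t)|\le t^\sigma$ instead. Since $\sigma<-1$, this gives $\psi\in L^1([T,+\infty))$. Moreover $\psi\in L^1([0,T])$ because $\psi\in L^1_{\loc}$; more precisely, $\psi\in\funLTR$ with support in $[0,+\infty)$, so $\psi$ is locally integrable on $[0,T]$. Hence $\psi\in L^1(\R)$ altogether. This is consistent with $\lambda_a(\psi)=0$, which shows the Laplace integral converges absolutely on the closed half-plane $\Re s\ge 0$.

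For $s$ in the angular region we then estimate
\[
	|(\cL\psi)(s)|\le\int_0^{+\infty}e^{-t\Re s}|\psi(t)|\,dt\le\int_0^T|\psi(t)|\,dt+2|B|\int_T^{+\infty}t^\sigma\,dt
	=\|\psi\|_{L^1(0,T)}+\frac{2|B|\,T^{\sigma+1}}{-\sigma-1}=:C.
\]
The constant $C$ does not depend on $s$. The only point to check is that the integral defining $\cL\psi$ makes sense for every $s$ in the region, including those with small real part. This holds because $e_{\Re s}\psi\in L^1$ whenever $\Re s\ge0$, so Definition \ref{def:LTfun} and Remark \ref{rem:LTfun} apply, with $\cL\psi$ extended to the boundary by the absolutely convergent integral if needed.

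The main, though mild, obstacle is to make sure the estimate is uniform as $s\to0$ inside the sector, which is the regime of interest in Lemma \ref{lem:Watson_0}. The argument above handles this through the pointwise bound $|e^{-st}|\le1$ rather than any decay coming from $\Re s$. In fact the sector restriction is only used to guarantee $\Re s\ge0$, so the same bound holds on the whole closed right half-plane.
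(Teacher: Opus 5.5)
Your proof is correct and takes the same route as the paper: split the Laplace integral at a large time, bound the part on $[0,T]$ by local integrability, and bound the tail by a multiple of $\int_T^{+\infty}t^\sigma\,dt<\infty$, using $|e^{-st}|\le 1$ for $\Re s\ge 0$. Your direct bound $|\psi(t)|\le 2|B|t^\sigma$ is slightly cleaner than the paper's subtraction of $Bt^\sigma$, and your constant $2|B|T^{\sigma+1}/(-\sigma-1)$ has the correct sign. The paper's expression $M^{\sigma+1}/(\sigma+1)$ should read $M^{\sigma+1}/(-\sigma-1)$, with $|B|$ in place of $B$.
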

\begin{proof}
Since $\psi(t)\sim Bt^\sigma$ as $t\to+\infty$, for any $\delta>0$ there exist $M>0$ sufficiently large such that $|\psi(t)-Bt^\sigma|\leq \delta t^\sigma$ for any 
$t\geq M$. Moreover, it holds
\[
\int_0^\infty e^{-st}\psi(t)\,dt - B\int_M^\infty e^{-st }t^\sigma \,dt 
= \int_0^M e^{-st}\psi(t)\,dt+\int_M^\infty e^{-st} (\psi(t)-Bt^\sigma)\,dt.
\]
On the one hand, there exists $K>0$ such that
\[
	\left|\int_0^M e^{-st}\psi(t)\,dt\right|\leq \int_0^M e^{-\Re(s)t}|\psi(t)|\,dt \leq K,
\]
since $\psi\in L^1_{\loc}([0,+\infty))$. On the other hand, we may estimate
\[
\left|\int_M^\infty e^{-st} (\psi(t)-Bt^\sigma)\,dt\right|\leq \delta \int_M^\infty e^{-\Re(s)t}t^\sigma \,dt\leq \frac{\delta M^{\sigma+1}}{\sigma+1},
\]
uniformly with respect to $s\in \C_+:=\{z\in \C: \Re(z)>0\}$. Similarly, one can estimate 
\[
	B\int_M^\infty e^{-st }t^\sigma \,dt \leq \frac{BM^{\sigma+1}}{\sigma+1}.
\]
The proof of the desired result follows, taking $C=K-(\delta+B)M^{\sigma+1}/(\sigma+1)>0$.
\end{proof}
\begin{lem}
\label{lem:Watson_infty}
Let $\psi\in\funLTR$ satisfy $\lambda_a(\psi)=0$. Assume also that 
\[
	\psi(t)\sim A t^\delta, \quad \text{as} \quad t \to 0^+,
\]
for some constants $A\in \C$ and $\delta\in \R$ with $\delta>-1$. Then it holds 
\[
	(\cL\psi)(s)\sim A \frac{\Gamma(\delta+1)}{s^{\delta+1}},
\]
as $s\to\infty$ in the angular region $|\arg(s)|\leq \theta<\pi/2$.
\end{lem}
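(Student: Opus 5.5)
The plan is a Watson-type (Abelian) argument: split $\psi$ into a piece near $t=0$, where the asymptotics $\psi(t)\sim At^\delta$ apply, and a tail, which is exponentially small in $\Re s$ because $\lambda_a(\psi)=0$. We then compare the near piece with the exact transform of $At^\delta$. Recall that $\int_0^\infty e^{-st}t^\delta\,dt=\Gamma(\delta+1)s^{-\delta-1}$ for $\Re s>0$, where $s^{-\delta-1}$ is the principal branch; this follows by rotating the contour, which is legitimate since $|\arg s|<\pi/2$. The claim is therefore equivalent to
\[
	s^{\delta+1}\int_0^\infty e^{-st}\bigl(\psi(t)-At^\delta\bigr)\,dt\to 0
\]
once the contribution of $At^\delta$ on $[0,\infty)$ is accounted for. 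That term is transformable with abscissa $0$, even though $\psi$ only is $\funLTR$ with $\lambda_a(\psi)=0$.

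Fix $\varepsilon>0$ and choose $\eta>0$ such that $|\psi(t)-At^\delta|\le\varepsilon t^\delta$ for $0<t\le\eta$. Write
\[
	\int_0^\infty e^{-st}\psi(t)\,dt-A\Gamma(\delta+1)s^{-\delta-1}=I_1+I_2-I_3,
\]
with the following terms:
\[
	I_1=\int_0^\eta e^{-st}\bigl(\psi(t)-At^\delta\bigr)\,dt,\qquad
	I_2=\int_\eta^\infty e^{-st}\psi(t)\,dt,\qquad
	I_3=A\int_\eta^\infty e^{-st}t^\delta\,dt.
\]
Set $\sigma=\Re s$. In the sector $|\arg s|\le\theta$ we have $\sigma\ge|s|\cos\theta$, so $s\to\infty$ forces $\sigma\to+\infty$ and $|s|\le\sigma/\cos\theta$.

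For $I_1$, we estimate $|I_1|\le\varepsilon\int_0^\infty e^{-\sigma t}t^\delta\,dt=\varepsilon\Gamma(\delta+1)\sigma^{-\delta-1}$. Multiplying by $|s|^{\delta+1}$ gives at most $\varepsilon\Gamma(\delta+1)(\cos\theta)^{-\delta-1}$, which is arbitrarily small uniformly in the sector. This uniformity is exactly where the angular restriction enters.

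For $I_2$, take $\sigma\ge1$ and use $e^{-\sigma t}=e^{-(\sigma-1)t}e^{-t}\le e^{-(\sigma-1)\eta}e^{-t}$ for $t\ge\eta$. This gives $|I_2|\le e^{-(\sigma-1)\eta}\|e_1\psi\|_{L^1}$, which is finite because $\lambda_a(\psi)=0<1$. Similarly, $|I_3|\le|A|e^{-(\sigma-1)\eta}\int_\eta^\infty e^{-t}t^\delta\,dt$. Both bounds, multiplied by $|s|^{\delta+1}\le(\sigma/\cos\theta)^{\delta+1}$, tend to $0$ as $\sigma\to\infty$, since exponential decay beats polynomial growth.

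Combining the three estimates yields
\[
	\limsup_{s\to\infty}\bigl|s^{\delta+1}(\cL\psi)(s)-A\Gamma(\delta+1)\bigr|\le C_\theta\varepsilon,
\]
and hence the claim, since $\varepsilon$ was arbitrary. If $A=0$, the statement should be read as $(\cL\psi)(s)=o(|s|^{-\delta-1})$, and the same argument applies verbatim.

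The main delicate points are the $I_1$ term and the branch identity for $\int_0^\infty e^{-st}t^\delta\,dt$. For the latter, note that both sides are holomorphic in $\Re s>0$ and agree for real $s>0$, so they agree on the whole half-plane by the identity principle. The step expected to require the most care is making the $I_1$ estimate uniform in the sector, which is why we pass from $|s|$ to $\sigma$ via $\cos\theta$. The rest is routine.
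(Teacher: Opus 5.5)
Your proof is correct and is the standard Abelian (Watson-type) argument: split at a small $\eta$, bound the near part by $\varepsilon\Gamma(\delta+1)(\Re s)^{-\delta-1}$ and convert this via $|s|\le \Re s/\cos\theta$, and make the two tails negligible with the factor $e^{-(\Re s-1)\eta}$. The paper gives no argument of its own here; it only refers to Doetsch's Theorem 33.3 (and to Lemma 2.2 of D'Abbicco--Girardi), and that proof follows the same splitting.
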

\begin{proof}
See the proof of Theorem 33.3 in \cite{Doetsch} (see also Lemma 2.2 in \cite{DAG2023}).
\end{proof}

\section*{Declarations}

\noindent
\textbf{Data.} No data have been employed in this research.

\noindent
\textbf{Conflicts of interest/Competing interests.} The authors have no relevant financial or non-financial interests to disclose.

\noindent
\textbf{Funding.} The first author has been partially supported by the Italian Ministry of the University and Research - MUR, within the framework of the Call relating to the scrolling of the final rankings of the PRIN 2022 - Project Code 2022HCLAZ8, CUP D53C24003370006 (PI A. Palmieri, Local unit Sc. Resp. S. Coriasco). The first author also expresses
gratitude for the hospitality extended to him during his visit to the Department of Mathematics and Informatics, University of Novi Sad, Serbia, during A.Y. 2024/2025,
where part of this work was developed. The second author has been partially supported by INdAM GNAMPA Project, Grant Code CUP E55F22000270001.
The third author has been supported by the Serbian Academy of Sciences and Arts, project F10.

\noindent
\textbf{Authors' contribution.} All authors contributed to the study conception and design. All authors commented on previous versions of the manuscript. All authors read and approved the final manuscript.

\end{document}